\DeclareMathOperator{\Hom}{Hom}
\DeclareMathOperator{\End}{End}
\DeclareMathOperator{\mmod}{mod}
\DeclareMathOperator{\rad}{rad}
\DeclareMathOperator{\Ext}{Ext}
\DeclareMathOperator{\add}{add}
\theoremstyle{plain}
\newtheorem{theorem}{Theorem}[section]
\newtheorem*{theorem*}{Theorem}
\theoremstyle{definition}
\newtheorem{defn}[theorem]{Definition}
\newtheorem{exmp}[theorem]{Example} 
\newtheorem{remark}[theorem]{Remark}
\newtheorem{lemma}[theorem]{Lemma}
\newtheorem{setup}[theorem]{Setup}
\date{}
\begin{document}
\setlength{\parindent}{0pt}
\setlength{\parskip}{7pt}
\title[Auslander-Reiten $(d+2)$-angles in subcategories]{Auslander-Reiten $(d+2)$-angles in subcategories and a $(d+2)$-angulated generalisation of a theorem by Br\"uning}
\author{Francesca Fedele}
\address{School of Mathematics, Statistics and Physics,
Newcastle University, Newcastle upon Tyne NE1 7RU, United Kingdom}
\email{F.Fedele2@newcastle.ac.uk}
\keywords{$d$-abelian category, $d$-representation finite algebra, extension closed subcategories, higher dimensional Auslander-Reiten theory.}
\subjclass[2010]{16G70, 18E10, 18E30}

\maketitle
\begin{abstract}
Let $\Phi$ be a finite dimensional algebra over an algebraically closed field $k$ and assume gldim$\,\Phi\leq d$,  for some fixed positive integer $d$.
For $d=1$, Br\"uning proved that there is a bijection between the wide subcategories of the abelian category mod$\,\Phi$ and those of the triangulated category $\mathcal{D}^b(\text{mod }\Phi)$. Moreover, for a suitable triangulated category $\mathcal{M}$, J\o rgensen gave a description of Auslander-Reiten triangles in the extension closed subcategories of $\mathcal{M}$.

In this paper, we generalise these results for $d$-abelian and $(d+2)$-angulated categories, where kernels and cokernels are replaced by complexes of $d+1$ objects and triangles are replaced by complexes of $d+2$ objects.
The categories are obtained as follows: if $\mathcal{F}\subseteq \text{mod } \Phi$ is a $d$-cluster tilting subcategory, consider $\overline{\mathcal{F}}:=\text{add} \{ \Sigma^{id}\mathcal{F}\mid i\in\mathbb{Z}  \}\subseteq \mathcal{D}^b(\text{mod }\Phi)$. Then $\mathcal{F}$ is $d$-abelian and plays the role of a higher mod$\,\Phi$ having for higher derived category the $(d+2)$-angulated category $\overline{\mathcal{F}}$. 
\end{abstract}

\section{Introduction}
Let $d$ be a fixed positive integer, $k$ an algebraically closed field and $\Phi$ a finite dimensional $k$-algebra with global dimension at most $d$. The category of finitely generated (right) $\Phi$-modules is denoted by $\mmod \Phi$ and its bounded derived category by $\mathcal{D}^b(\mmod\Phi)$, with suspension functor $\Sigma$. Moreover, for an additive subcategory $\mathcal{C}$ of $\mmod\Phi$, we define an additive subcategory
\begin{align*}
\overline{\mathcal{C}}:=\add\{ \Sigma^{id} \mathcal{C}\mid i\in \mathbb{Z} \}\subseteq\mathcal{D}^b(\mmod\Phi).
\end{align*}
For $d\geq 2$, suppose there is a $d$-cluster tilting subcategory $\mathcal{F}\subseteq \mmod\Phi$. Then $\mathcal{F}$ plays the role of a higher $\mmod\Phi$ and $\overline{\mathcal{F}}$ of a higher derived category of $\mathcal{F}$.

We generalise Br\"uning's result on wide subcategories of $\mathcal{D}^b(\mmod\Phi)$ and J\o rgensen's result on Auslander-Reiten triangles in extension closed subcategories of triangulated categories to higher homological algebra.

\subsection{Classic background ($d=1$ case).}
In the case $d=1$, we have that $\mmod\Phi$ is hereditary. So \cite[theorem 1.1]{B} can be stated as follows in this case.

{\bf Theorem (Br\"uning).}
{\em
There is a bijection
\begin{align*}
    \Bigg\{
    \begin{matrix}\text{wide subcategories} \\ \text{of } \mmod\Phi \end{matrix}
    \Bigg\}
    \rightarrow
    \Bigg\{
    \begin{matrix} \text{wide subcategories} \\ \text{of } \mathcal{D}^b(\mmod\Phi) \end{matrix}
    \Bigg\}
\end{align*}
sending a wide subcategory $\mathcal{W}$ of $\mmod\Phi$ to $\overline{\mathcal{W}}$.
}

Happel introduced Auslander-Reiten triangles in triangulated categories in \cite{DH} and J\o rgensen studied Auslander-Reiten triangles in their extension closed subcategories in \cite{JP}.
Whenever $\mathcal{M}$ is a skeletally small $\Hom$-finite $k$-linear triangulated category with split idempotents and $\mathcal{W}\subseteq\mathcal{M}$ is an additive subcategory closed under extensions, \cite[theorem 3.1]{JP} states the following.

{\bf Theorem (J\o rgensen).}
{\em
Let $W$ be in $\mathcal{W}$ and suppose that there exists $U'$ in $\mathcal{W}$ and a non-zero morphism $W\rightarrow\Sigma U'$. Let
\begin{align*}
    \xymatrix{
    X\ar[r]& Y\ar[r]& W\ar[r]& \Sigma X
    }
\end{align*}
be an Auslander-Reiten triangle in $\mathcal{M}$. Then the following are equivalent.
\begin{enumerate}[label=(\alph*)]
    \item $X$ has a $\mathcal{W}$-cover of the form $U\rightarrow X$,
    \item there is an Auslander-Reiten triangle in $\mathcal{W}$ of the form
    \begin{align*}
    \xymatrix{
    U\ar[r]& V\ar[r]& W\ar[r]& \Sigma U.
    }
\end{align*}
\end{enumerate}
}

Note that the above theorem can be applied to any wide subcategory of the triangulated category $\mathcal{D}^b(\mmod\Phi)$. So, given a wide subcategory of $\mmod \Phi$, one can find a wide subcategory $\mathcal{W}$ of $\mathcal{D}^b(\mmod\Phi)$ using the theorem by Br\"uning and then use the theorem by J\o rgensen to find Auslander-Reiten triangles in $\mathcal{W}$.

\subsection{This paper ($d\geq 1$ case).}
%Throughout this paper, we work in the following setup.
%$Let $d$ be a fixed positive integer, $k$ an algebraically closed field and $\Phi$ a finite dimensional $k$-algebra with global dimension at most $d$. The category of finitely generated (right) $\Phi$-modules is denoted by $\mmod \Phi$ and its bounded derived category by $\mathcal{D}^b(\mmod\Phi)$, with suspension functor $\Sigma$. Moreover, for a subcategory $\mathcal{C}$ of $\mmod\Phi$, we define
%\begin{align*}
%\overline{\mathcal{C}}:=\add\{ \Sigma^{id} \mathcal{C}\mid i\in \mathbb{Z} \}\subseteq\mathcal{D}^b(\mmod\Phi).
%\end{align*}
%Concepts of homological algebra can sometimes be generalised to the so called higher homological algebra.
Jasso generalised abelian categories to $d$-abelian categories in \cite{JG}, where kernels and cokernels are replaced by complexes of $d+1$ objects, called $d$-kernels and $d$-cokernels respectively, and short exact sequences by complexes of $d+2$ objects, called $d$-exact sequences.
In \cite{GKO}, Geiss, Keller and Oppermann likewise generalised triangulated categories to $(d+2)$-angulated categories, where triangles are replaced by complexes consisting of $d+2$ objects.
%The same paper gives a recipe to construct a $(d+2)$-angulated category starting from a $d$-cluster tilting subcategory, closed under $\Sigma^d$, of a triangulated category with suspension $\Sigma$.

A $d$-cluster tilting subcategory $\mathcal{F}\subseteq \mmod \Phi$ is a functorially finite additive subcategory of $\mmod\Phi$ such that
\begin{align*}
    \mathcal{F}=\{ a\in\mmod\Phi\mid \Ext^{1\,\dots\, d-1} (\mathcal{F},a)=0 \}=\{ a\in\mmod\Phi\mid \Ext^{1\,\dots\, d-1} (a,\mathcal{F})=0 \},
\end{align*}
see Definition \ref{defn_dct}. Suppose such an $\mathcal{F}\subseteq\mmod\Phi$ exists.
Iyama proved in \cite{I} that $\overline{\mathcal{F}}$ is $d$-cluster tilting in $\mathcal{D}^b(\mmod\Phi)$ and so $\overline{\mathcal{F}}$ becomes a $(d+2)$-angulated category by \cite[theorem 1]{GKO}. Note that the $d$-abelian category $\mathcal{F}$ plays the role of a higher $\mmod \Phi$ and $\overline{\mathcal{F}}$ of a higher derived category of $\mathcal{F}$.

Keeping in mind the above, we generalise the theorem by Br\"uning to higher homological algebra as follows.

{\bf Theorem A.}
{\em
There is a bijection
\begin{align*}
    \Bigg\{
    \begin{matrix} \text{functorially finite}\\ \text{wide subcategories} \\ \text{of } \mathcal{F} \end{matrix}
    \Bigg\}
    \rightarrow
    \Bigg\{
    \begin{matrix} \text{functorially finite}\\ \text{wide subcategories} \\ \text{of } \overline{\mathcal{F}} \end{matrix}
    \Bigg\}
    \end{align*}
    sending a wide subcategory $\mathcal{W}$ of $\mathcal{F}$ to $\overline{\mathcal{W}}$.
}

In the above, by a wide subcategory of a $d$-abelian category, we mean an additive subcategory closed under $d$-kernels and $d$-cokernels, and such that every $d$-exact sequence with end terms in the subcategory is Yoneda equivalent to a $d$-exact sequence having all terms in the subcategory. By a wide subcategory of a $(d+2)$-angulated category with automorphism $\Sigma^d$, we mean an additive subcategory closed under $d$-extensions and $\Sigma^{\pm d}$.

Iyama and Yoshino defined Auslander-Reiten $(d+2)$-angles in $(d+2)$-angulated categories in \cite{IY}.
Here, we define Auslander-Reiten $(d+2)$-angles in additive subcategories of $(d+2)$-angulated categories closed under $d$-extensions, an example of which are wide subcategories. We generalise the theorem by J\o rgensen as follows.

{\bf Theorem B.}
{\em
Let $\mathcal{M}$ be a skeletally small $\Hom$-finite $k$-linear $(d+2)$-angulated category with split idempotents. Let $\mathcal{W}$ be an additive subcategory of $\mathcal{M}$ closed under $d$-extensions.

Let $W$ be in $\mathcal{W}$ and suppose that there exists $U^0$ in $\mathcal{W}$ and a non-zero morphism $\gamma^{d+1}:W\rightarrow \Sigma^d U^0$. Let
\begin{align*}
\xymatrix {
\epsilon:& X^0\ar[r]^{\xi^0} & X^1\ar[r]^{\xi^1}& X^2\ar[r] &\cdots\ar[r] & X^d\ar[r]^{\xi^d} & W\ar[r]^{\xi^{d+1}} &\Sigma^d X^0
}
\end{align*}
be an Auslander-Reiten $(d+2)$-angle in $\mathcal{M}$. Then the following are equivalent:
\begin{enumerate}[label=(\alph*)]
    \item $X^0$ has a $\mathcal{W}$-cover of the form $\varphi:W^0\rightarrow X^0$,
    \item there is an Auslander-Reiten $(d+2)$-angle in $\mathcal{W}$ of the form
    \begin{align*}
\xymatrix {
\epsilon':& W^0\ar[r]^{\omega^0} & W^1\ar[r]^{\omega^1}& W^2\ar[r] &\cdots\ar[r] & W^d\ar[r]^{\omega^d} & W\ar[r]^-{\omega^{d+1}} &\Sigma^d W^0.
}
\end{align*}
\end{enumerate}
}

Note that for $d=1$, the above becomes exactly the theorem by J\o rgensen.

{\bf Remark C.}
We will apply Theorems A and B to the class of examples introduced in \cite[section\ 4]{V} and \cite[section\ 7]{HJV}. For positive integers $m,\, l$ and $d$ such that $(m-1)/l =d/2$, consider 
\begin{align*}
\Phi=k A_m/ (\rad_{kA_m})^l.
\end{align*}
There is a unique $d$-cluster tilting subcategory $\mathcal{F}$ of $\mmod\Phi$ with Auslander-Reiten quiver
\begin{align*}
    \xymatrix{
    f_1\ar[r]& f_2\ar[r]&\cdots\ar[r]& f_l\ar[r]&\cdots\ar[r]&f_m\ar[r]&\cdots\ar[r]&f_{m+l-2}\ar[r]& f_{m+l-1},
    }
\end{align*}
where $f_1,\dots, f_m$ are the indecomposable projectives and $f_l,\dots, f_{m+l-1}$ the indecomposable injectives in $\mmod \Phi$. The wide subcategories of $\mathcal{F}$ are fully described in \cite{HJV}.
We consider the $(d+2)$-angulated category $\overline{\mathcal{F}}$. We give a full description of the wide subcategories of $\overline{\mathcal{F}}$, using Theorem A, and a recipe to construct Auslander-Reiten $(d+2)$-angles in these subcategories, using Theorem B.

The paper is organised as follows. Section \ref{section_defn} recalls the definitions of $d$-abelian and $(d+2)$-angulated categories. Section \ref{section_properties} consists of some definitions and a collection of well-known results on $(d+2)$-angulated categories. Section \ref{section_wide} proves Theorem A. Section \ref{section_AR} studies Auslander-Reiten $(d+2)$-angles, both in the ambient category $\mathcal{M}$ and in its subcategory $\mathcal{W}$. Section \ref{section_B} proves Theorem B. Finally, Section \ref{section_example} is an application of Theorems A and B to the class of examples from \cite{V}.

\section{Definitions of $d$-abelian and $(d+2)$-angulated categories}\label{section_defn}

As mentioned in the introduction, if there is a $d$-cluster tilting subcategory $\mathcal{F}\subseteq\mmod\Phi$, then $\mathcal{F}$ is a $d$-abelian category and $\overline{\mathcal{F}}$ is a $(d+2)$-angulated category. In this section we recall the definitions of $d$-abelian and $(d+2)$-angulated categories.

\begin{defn}[{\cite[definitions 2.2 and 2.4]{JG}}]
Let $\mathcal{A}$ be an additive category.
\begin{enumerate}[label=(\alph*)]
    \item A diagram of the form
    $\xymatrix{
    A^0\ar[r]& A^1\ar[r]& A^2\ar[r]&\cdots\ar[r]&A^{d-1}\ar[r]&A^d
    }$
    is a \textit{$d$-kernel} of a morphism $\xymatrix{A^d\ar[r]& A^{d+1}}$ if 
    \begin{align*}
    \xymatrix{
    0\ar[r] & A^0\ar[r]& A^1\ar[r]& A^2\ar[r]&\cdots\ar[r]&A^{d-1}\ar[r]&A^d\ar[r]& A^{d+1}
    }
    \end{align*}
    becomes an exact sequence under $\Hom_{\mathcal{A}}(B,-)$ for each $B$ in $\mathcal{A}$.
    \item A diagram of the form
    $\xymatrix{
    A^1\ar[r]&A^2\ar[r]&\cdots\ar[r]&A^{d-1}\ar[r]&A^d\ar[r]& A^{d+1}
    }$
   is a \textit{$d$-cokernel} of a morphism $\xymatrix{A^0\ar[r]& A^{1}}$ if 
    \begin{align*}
     \xymatrix{
    A^0\ar[r]& A^1\ar[r]& A^2\ar[r]&\cdots\ar[r]&A^{d-1}\ar[r]&A^d\ar[r]& A^{d+1}\ar[r]&0
    }
    \end{align*}
    becomes an exact sequence under $\Hom_{\mathcal{A}}(-,B)$ for each $B$ in $\mathcal{A}$.
    \item A $d$-exact sequence is a diagram of the form
    \begin{align*}
        \xymatrix{
    0\ar[r]&A^0\ar[r]^{\alpha^0}& A^1\ar[r]& A^2\ar[r]&\cdots\ar[r]&A^{d-1}\ar[r]&A^d\ar[r]^{\alpha^d}& A^{d+1}\ar[r]&0,
    }
    \end{align*}
    such that $\xymatrix{A^0\ar[r]^{\alpha^0}& A^1\ar[r]& A^2\ar[r]&\cdots\ar[r]&A^{d-1}\ar[r]&A^d}$ is a $d$-kernel of $\alpha^d$ and $\xymatrix{A^1\ar[r]& A^2\ar[r]&\cdots\ar[r]&A^{d-1}\ar[r]&A^d\ar[r]^{\alpha^d}& A^{d+1}}$ is a $d$-cokernel of $\alpha^0$.
\end{enumerate}
\end{defn}

\begin{defn}[{\cite[definition 3.1]{JG}}]
A \textit{$d$-abelian category} is an additive category $\mathcal{A}$ which satisfies the following axioms:
\begin{enumerate}
    \item[(A0)] The category $\mathcal{A}$ has split idempotents.
    \item[(A1)] Each morphism in $\mathcal{A}$ has a $d$-kernel and a $d$-cokernel.
    \item[(A2)] If $\alpha^0:\xymatrix{A^0\ar[r]&A^1}$ is a monomorphism and
    $\xymatrix{
    A^1\ar[r]&A^2\ar[r]&\cdots\ar[r]& A^{d+1}
    }$
    is a $d$-cokernel of $\alpha^0$, then
    \begin{align*}
        \xymatrix{
    0\ar[r]&A^0\ar[r]^{\alpha^0}& A^1\ar[r]& A^2\ar[r]&\cdots\ar[r]&A^{d-1}\ar[r]&A^d\ar[r]& A^{d+1}\ar[r]&0,
    }
    \end{align*}
    is a $d$-exact sequence.
    \item[(A2$^{\text{op}}$)] If $\alpha^d:\xymatrix{A^d\ar[r]&A^{d+1}}$ is an epimorphism and
    $\xymatrix{
    A^0\ar[r]&\cdots\ar[r]& A^{d-1}\ar[r]& A^{d}
    }$
    is a $d$-kernel of $\alpha^d$, then
    \begin{align*}
        \xymatrix{
    0\ar[r]&A^0\ar[r]& A^1\ar[r]& A^2\ar[r]&\cdots\ar[r]&A^{d-1}\ar[r]&A^d\ar[r]^{\alpha^d}& A^{d+1}\ar[r]&0,
    }
    \end{align*}
    is a $d$-exact sequence.
\end{enumerate}
\end{defn}

\begin{defn}[{\cite[definition\ 1.1]{GKO}}]
Let $\mathcal{M}$ be an additive category and let $\Sigma^d$ be an automorphism of $\mathcal{M}$ with inverse $\Sigma^{-d}$. A \textit{$\Sigma^d$-sequence} is a sequence of morphisms in $\mathcal{M}$ of the form
\begin{align}\label{diagram_angle}
\xymatrix {
 \epsilon : & X^0\ar[r]^{\xi^0} & X^1\ar[r]^{\xi^1}& X^2\ar[r] &\cdots\ar[r] & X^d\ar[r]^{\xi^d} & X^{d+1}\ar[r]^{\xi^{d+1}} &\Sigma^d X^0.
}
\end{align}
A \textit{morphism of $\Sigma^d$-sequences} is given by a sequence of morphisms $\varphi=(\varphi^0,\dots,\,\varphi^{d+1})$ such that the following diagram commutes:
\begin{align*}
    \xymatrix {
\epsilon:\ar[d]^\varphi &X^0\ar[r]^{\xi^0}\ar[d]^{\varphi^0} & X^1\ar[r]^{\xi^1}\ar[d]^{\varphi^1}& X^2\ar[r]\ar[d]^{\varphi^2} &\cdots\ar[r] & X^d\ar[r]^{\xi^d}\ar[d]^{\varphi^d} & X^{d+1}\ar[r]^{\xi^{d+1}}\ar[d]^{\varphi^{d+1}} &\Sigma^d X^0\ar[d]^{\Sigma^d \varphi^0}\\
\epsilon': &Y^0\ar[r]_{\eta^0} & Y^1\ar[r]_{\eta^1}& Y^2\ar[r] &\cdots\ar[r] & Y^d\ar[r]_{\eta^d} & Y^{d+1}\ar[r]_{\eta^{d+1}} &\Sigma^d Y^0.
}
\end{align*}
\end{defn}

\begin{defn}[{\cite[definition\ 1.1]{GKO}}]\label{defn_angles}
A $(d+2)$\textit{-angulated category} is a triple $(\mathcal{M},\Sigma^d,\pentagon)$, where $\mathcal{M}$, $\Sigma^d$ are as above and $\pentagon$ is a collection of $\Sigma^d$-sequences, called \textit{$(d+2)$-angles}, satisfying the following axioms.
\begin{enumerate}
    \item[(N1)] The collection $\pentagon$ is closed under sums and summands and, for every $X\in\mathcal{M}$, the \textit{trivial $\Sigma^d$-sequence}
    \begin{align*}
    \xymatrix {
    \epsilon : & X\ar[r]^{1_{X}} & X\ar[r]& 0\ar[r] &\cdots\ar[r] & 0\ar[r] & 0\ar[r] &\Sigma^d X}
    \end{align*}
    is in $\pentagon$.
    For each morphism $\xi^0:X^0\rightarrow X^1$ in $\mathcal{M}$, there is a $(d+2)$-angle in $\pentagon$ of the form (\ref{diagram_angle}).
    
    \item[(N2)] A $\Sigma^d$-sequence (\ref{diagram_angle}) is in $\pentagon$ if and only if so is its \textit{left rotation}:
    \begin{align*}
    \xymatrix {
    X^1\ar[r]^{\xi^1}& X^2\ar[r] &\cdots\ar[r] & X^d\ar[r]^{\xi^d} & X^{d+1}\ar[r]^{\xi^{d+1}} &\Sigma^d X^0\ar[rr]^{(-1)^d\Sigma^d(\xi^0)} && \Sigma^d X^1.
    }
    \end{align*}
    
    \item[(N3)] Each commutative diagram of solid arrows, with rows in $\pentagon$
    \begin{align*}
    \xymatrix {
   X^0\ar[r]^{\xi^0}\ar[d]^{\varphi^0} & X^1\ar[r]^{\xi^1}\ar[d]^{\varphi^1}& X^2\ar[r]\ar@{-->}[d]^{\varphi^2} &\cdots\ar[r] & X^d\ar[r]^{\xi^d}\ar@{-->}[d]^{\varphi^d} & X^{d+1}\ar[r]^{\xi^{d+1}}\ar@{-->}[d]^{\varphi^{d+1}} &\Sigma^d X^0\ar[d]^{\Sigma^d \varphi^0}\\
    Y^0\ar[r]_{\eta^0} & Y^1\ar[r]_{\eta^1}& Y^2\ar[r] &\cdots\ar[r] & Y^d\ar[r]_{\eta^d} & Y^{d+1}\ar[r]_{\eta^{d+1}} &\Sigma^d Y^0,
    }
    \end{align*}
    can be completed as indicated to a morphism of $\Sigma^d$-sequences.
    
    \item[(N4)] In the situation of (N3), the morphisms $\varphi^2,\dots,\,\varphi^{d+1}$ can be chosen such that
   \begin{align*}
       \xymatrix{
       X^1\oplus Y^0\ar[rr]^{\begin{psmallmatrix} -\xi^1 &0\\ \varphi^1 &\eta^0 \end{psmallmatrix}} &&X^2\oplus Y^1\ar[r]& \cdots\ar[r]& \Sigma^d X^0\oplus Y^{d+1}\ar[rr]^{\begin{psmallmatrix} -\Sigma^d\xi^0 &0\\ \Sigma\varphi^0 &\eta^{d+1} \end{psmallmatrix}}&& \Sigma^d X^1 \oplus \Sigma^d Y^0
       }
   \end{align*}
   belongs to $\pentagon$.
\end{enumerate}
\end{defn}

\section{Properties of $(d+2)$-angulated categories}\label{section_properties}
In this section, we present the setup we will be working in. Then, we introduce some terminology that will be used in later sections and we state some well known properties of $(d+2)$-angulated categories.

\begin{defn}
Let $\mathcal{A}$ be an additive category. An \textit{additive subcategory of $\mathcal{A}$} is a full subcategory which is closed under direct sums, direct summands and isomorphisms in $\mathcal{A}$.
\end{defn}

\begin{setup}\label{setup}
Let $\mathcal{M}$ be a skeletally small $k$-linear $\Hom$-finite $(d+2)$-angulated category with split idempotents. Note that this implies that $\mathcal{M}$ is Krull-Schmidt by \cite[p. 52, theorem of Krull-Schmidt]{R}.

Let $\mathcal{W}$ be an additive subcategory of $\mathcal{M}$ closed under $d$-extensions in the sense that given any morphism in $\mathcal{M}$ of the form $\delta: W''\rightarrow \Sigma^d W'$ with $W',\,W''\in\mathcal{W}$, there is a $(d+2)$-angle in $\mathcal{M}$ of the form
\begin{align*}
\xymatrix {
W'\ar[r] & W^1\ar[r] &\cdots\ar[r] & W^d\ar[r] & W''\ar[r]^-{\delta} &\Sigma^d W'
}
\end{align*}
with $W^i\in\mathcal{W}$ for any $i\in\{1,\dots,\, d\}$.
\end{setup}

\begin{defn}
An additive subcategory $\mathcal{W}$ of a $(d+2)$-angulated category $\mathcal{M}$ is called \textit{wide} if it is closed under $d$-extensions and satisfies $\Sigma^{d}(\mathcal{W})\subseteq \mathcal{W}$ and $\Sigma^{-d}(\mathcal{W})\subseteq \mathcal{W}$.
\end{defn}

\begin{defn}[{\cite[definition\ 1.1, chapter IV]{A}}]
Consider an additive category $\mathcal{X}$ and let $X^0,\,X^1,\,X^d,\,X^{d+1}\in\mathcal{X}$. We say that
\begin{enumerate}[label=(\alph*)]
    \item a morphism $\xi^0:X^0\rightarrow X^1$ is \textit{left almost split in $\mathcal{X}$} if it is not a split monomorphism and for every $Y$ in $\mathcal{X}$, every morphism $\gamma:X^0\rightarrow Y$ which is not a split monomorphism factors through $\xi^0$, \textbf{i.e.} there exists a morphism $X^1\rightarrow Y$ such that the following diagram commutes:
    \begin{align*}
        \xymatrix  {
        X^0\ar[rr]^{\xi^0}\ar[rd]_\gamma&& X^1;\ar@{-->}[ld]^{\exists}\\
        & Y &
        }
    \end{align*}
    \item a morphism $\xi^d:X^d\rightarrow X^{d+1}$ is \textit{right almost split in $\mathcal{X}$} if it is not a split epimorphism and for every $Y$ in $\mathcal{X}$, every morphism $\delta:Y\rightarrow X^{d+1}$ which is not a split epimorphism factors through $\xi^d$, \textbf{i.e.} there exists a morphism $Y\rightarrow X^d$ such that the following diagram commutes:
    \begin{align*}
        \xymatrix  {
        X^d\ar[rr]^{\xi^d}&& X^{d+1}.\\
        & Y\ar[ru]_\delta \ar@{-->}[lu]^{\exists} &
        }
    \end{align*}
\end{enumerate}
\end{defn}

\begin{defn}[{\cite[definition\ 1.1, chapter IV]{A}}]
A morphism $\xi:X\rightarrow Y$ is \textit{right minimal} if each morphism $\varphi:X\rightarrow X$ which satisfies $\xi\circ\varphi=\xi$ is an isomorphism. Dually, $\xi$ is \textit{left minimal} if each morphism $\nu:Y\rightarrow Y$ which satisfies $\nu\circ\xi=\xi$ is an isomorphism.
\end{defn}

\begin{defn}
[{\cite[definition \ 1.4]{JP}}]
Let $X\in\mathcal{M}$. A $\mathcal{W}$\textit{-precover} of $X$ is a morphism of the form $\omega :W\rightarrow X$ with $W\in \mathcal{W}$ such that every morphism $\omega':W'\rightarrow X$ with $W'\in \mathcal{W}$ factorizes as:
\begin{align*}
\xymatrix{
W'\ar[rr]^{\omega'} \ar@{-->}[dr]_{\exists}& & X.\\
& W \ar[ru]_{\omega} &
}
\end{align*}
A $\mathcal{W}$\textit{-cover} of $X$ is a $\mathcal{W}$-precover of $X$ which is also a right minimal morphism.
The dual notions of precovers and covers are \textit{preenvelopes} and \textit{envelopes} respectively.
\end{defn}

\begin{defn}
The subcategory $\mathcal{W}$ of $\mathcal{M}$ is called \textit{precovering} if every object in $\mathcal{M}$ has a $\mathcal{W}$-precover. Dually, $\mathcal{W}$ is \textit{preenveloping} if every object in $\mathcal{M}$ has a $\mathcal{W}$-preenvelope. If $\mathcal{W}$ is both precovering and preenveloping, we say that it is \textit{functorially finite}.
\end{defn}

\begin{remark}
Note that by \cite[proposition 2.5(a)]{GKO}, any $(d+2)$-angle in $\mathcal{M}$ of the form
\begin{align*}
\xymatrix {
 X^0\ar[r] & X^1\ar[r]& X^2\ar[r] &\cdots\ar[r] & X^d\ar[r] & X^{d+1}\ar[r] &\Sigma^d X^0,
}
\end{align*}
is such that the induced sequence 
\begin{align*}
    \xymatrix @C=1em{
 \cdots\ar[r]&\mathcal{M}(-,\Sigma^{-d}X^{d+1})\ar[r]&\mathcal{M}(-,X^0)\ar[r] &\cdots\ar[r] &  \mathcal{M}(-,X^{d+1})\ar[r] &\mathcal{M}(-,\Sigma^d X^0)\ar[r]&\cdots
}
\end{align*}
is exact. The following three lemmas are direct consequences of this result.
\end{remark}

\begin{lemma}\label{lemma_consecutive}
Any two consecutive morphisms in a $(d+2)$-angle compose to zero.
\end{lemma}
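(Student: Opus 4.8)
The plan is to derive the lemma directly from the exactness statement recalled in the remark above, together with the Yoneda lemma; essentially all the content is already packaged in \cite[proposition 2.5(a)]{GKO}, so the proof should be short. Fix a $(d+2)$-angle
\begin{align*}
\xymatrix {
X^0\ar[r]^{\xi^0} & X^1\ar[r]^{\xi^1}& X^2\ar[r] &\cdots\ar[r] & X^d\ar[r]^{\xi^d} & X^{d+1}\ar[r]^{\xi^{d+1}} &\Sigma^d X^0
}
\end{align*}
and an index $i\in\{0,\dots,d\}$. By the remark, for every $B\in\mathcal{M}$ the induced sequence of abelian groups
\begin{align*}
\cdots\longrightarrow\mathcal{M}(B,X^i)\xrightarrow{(\xi^i)_*}\mathcal{M}(B,X^{i+1})\xrightarrow{(\xi^{i+1})_*}\mathcal{M}(B,X^{i+2})\longrightarrow\cdots
\end{align*}
is exact, where near the $\Sigma^d$-corner the maps and objects are read off using the periodic continuation supplied by (N2) (so that $X^{d+2}=\Sigma^d X^0$, the map $\mathcal{M}(B,X^{d+1})\to\mathcal{M}(B,\Sigma^d X^0)$ is $(\xi^{d+1})_*$, and the next map is $(-1)^d(\Sigma^d\xi^0)_*$, and so on). In particular the composite of any two consecutive maps in this sequence is zero.

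The key step is then to specialise $B=X^i$ and evaluate the composite $(\xi^{i+1})_*\circ(\xi^i)_*$ at $\mathrm{id}_{X^i}$: since $(\xi^i)_*(\mathrm{id}_{X^i})=\xi^i$ and $(\xi^{i+1})_*(\xi^i)=\xi^{i+1}\circ\xi^i$, exactness of the above sequence forces $\xi^{i+1}\circ\xi^i=0$. Letting $i$ run over $\{0,\dots,d\}$ this disposes of every pair of consecutive morphisms appearing in the displayed $(d+2)$-angle; running the identical argument one step further along the (periodic) sequence, i.e. evaluating $(-1)^d(\Sigma^d\xi^0)_*\circ(\xi^{d+1})_*$ at $\mathrm{id}_{X^{d+1}}$, likewise gives the ``wrap-around'' relation $\Sigma^d(\xi^0)\circ\xi^{d+1}=0$. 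Since the angle was arbitrary, this proves the lemma.

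I do not anticipate a genuine obstacle: the substantive input is the exactness in \cite[proposition 2.5(a)]{GKO}, already cited in the remark, and the only point that needs a moment's attention is the indexing bookkeeping as one passes around the $\Sigma^d$-corner, which is precisely what the rotation axiom (N2) takes care of. One could alternatively phrase the last paragraph by first left-rotating the angle via (N2) and then reusing the generic argument, which avoids writing out the corner conventions explicitly.
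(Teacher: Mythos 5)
Your proof is correct and is exactly the argument the paper intends: the lemma is stated there as a direct consequence of the exactness from \cite[proposition 2.5(a)]{GKO}, and your step of evaluating the composite $(\xi^{i+1})_*\circ(\xi^i)_*$ at $\mathrm{id}_{X^i}$ is the standard way to extract $\xi^{i+1}\circ\xi^i=0$ from that exactness. The handling of the wrap-around via (N2) is also fine.
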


\begin{lemma}\label{lemma_zeroiffexists1}
Consider
\begin{align*}
\xymatrix {
&& A\ar[d]^\alpha\ar@{-->}[ld]\ar[rd]^0 \\
\epsilon: & X^0\ar[r]_{\xi^0} & X^1\ar[r]_{\xi^{1}}& X^2\ar[r] &\cdots\ar[r] & X^{d+1}\ar[r]_{\xi^{d+1}}&\Sigma^d X^0,
}
\end{align*}
where $\epsilon$ is a $(d+2)$-angle in $\mathcal{M}$. Then $\xi^1\circ \alpha=0$ if and only if there exists a morphism $\delta:A\rightarrow X^0$ such that $\xi^0\circ \delta =\alpha$.
\end{lemma}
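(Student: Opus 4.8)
The plan is to read off both implications directly from the long exact sequence quoted in the Remark preceding the lemma. First I would apply the functor $\mathcal{M}(A,-)$ to the $(d+2)$-angle $\epsilon$; by \cite[proposition 2.5(a)]{GKO} (as recalled in the Remark) this yields a long exact sequence, and the relevant three-term piece is
\begin{align*}
\xymatrix @C=1.5em{
\mathcal{M}(A,X^0)\ar[rr]^{(\xi^0)_*} && \mathcal{M}(A,X^1)\ar[rr]^{(\xi^1)_*} && \mathcal{M}(A,X^2),
}
\end{align*}
which is exact at $\mathcal{M}(A,X^1)$, i.e. $\im (\xi^0)_* = \ker (\xi^1)_*$.

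For the backward direction, if $\alpha = \xi^0\circ\delta$ for some $\delta:A\rightarrow X^0$, then $\xi^1\circ\alpha = \xi^1\circ\xi^0\circ\delta = 0$, since $\xi^1\circ\xi^0 = 0$ by Lemma \ref{lemma_consecutive} (two consecutive morphisms in a $(d+2)$-angle compose to zero); equivalently, this is immediate from $\alpha \in \im(\xi^0)_* \subseteq \ker(\xi^1)_*$. For the forward direction, suppose $\xi^1\circ\alpha = 0$, i.e. $\alpha\in\ker(\xi^1)_*$. By exactness, $\alpha\in\im(\xi^0)_*$, so there is $\delta\in\mathcal{M}(A,X^0)$ with $(\xi^0)_*(\delta) = \xi^0\circ\delta = \alpha$, as required.

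There is essentially no obstacle here: the statement is a verbatim translation, into the language of $(d+2)$-angles, of the standard fact that a morphism into the second term of a triangle which dies after one more step factors through the first term, and it is nothing more than exactness of the represented sequence at one spot. The only thing to be careful about is bookkeeping of indices — making sure the exact segment is taken at $\mathcal{M}(A,X^1)$ and not shifted by one — and noting that the dashed arrow in the displayed diagram is precisely the claimed $\delta$. I would keep the write-up to two or three lines.
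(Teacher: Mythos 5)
Your proof is correct and is exactly the argument the paper intends: the paper states this lemma (together with Lemmas \ref{lemma_consecutive} and \ref{lemma_zeroiffexists2}) as a direct consequence of the exactness of the sequence obtained by applying $\mathcal{M}(A,-)$ to the $(d+2)$-angle, via \cite[proposition 2.5(a)]{GKO}, and offers no further detail. Your reading of the exact segment at $\mathcal{M}(A,X^1)$ and the identification of the dashed arrow with $\delta$ are precisely right.
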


\begin{lemma}\label{lemma_zeroiffexists2}
Consider
\begin{align*}
\xymatrix {
\epsilon: & X^0\ar[r]^{\xi^0} & X^1\ar[r] &\cdots\ar[r] & X^d\ar[r]^{\xi^d}\ar[rd]_0 & X^{d+1}\ar[r]^{\xi^{d+1}}\ar[d]_\varphi &\Sigma^d X^0,\ar@{-->}[ld]\\
&&&&& A
}
\end{align*}
where $\epsilon$ is a $(d+2)$-angle in $\mathcal{M}$. Then $\varphi\circ \xi^d=0$ if and only if there exists a morphism $\phi:\Sigma^d X^0\rightarrow A$ such that $\phi\circ \xi^{d+1}=\varphi$.
\end{lemma}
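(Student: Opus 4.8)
The plan is to dualise the proof of Lemma \ref{lemma_zeroiffexists1}: apply the contravariant functor $\mathcal{M}(-,A)$ to the $(d+2)$-angle $\epsilon$ and read off exactness at the appropriate spot.

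Concretely, the first step is to invoke the contravariant counterpart of the remark preceding Lemma \ref{lemma_consecutive} (equivalently, \cite[proposition 2.5(a)]{GKO} applied in the opposite $(d+2)$-angulated category, which is again $(d+2)$-angulated): applying $\mathcal{M}(-,A)$ to $\epsilon$ produces a long exact sequence of abelian groups, a relevant portion of which is
\begin{align*}
\mathcal{M}(\Sigma^d X^0,A)\xrightarrow{\ -\,\circ\,\xi^{d+1}\ }\mathcal{M}(X^{d+1},A)\xrightarrow{\ -\,\circ\,\xi^{d}\ }\mathcal{M}(X^{d},A).
\end{align*}
The second step is simply to interpret exactness at the middle term, namely $\Ker(-\circ\xi^{d})=\im(-\circ\xi^{d+1})$. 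The morphism $\varphi\colon X^{d+1}\to A$ satisfies $\varphi\circ\xi^{d}=0$ if and only if $\varphi\in\Ker(-\circ\xi^{d})$, hence if and only if $\varphi\in\im(-\circ\xi^{d+1})$, that is, if and only if $\varphi=\phi\circ\xi^{d+1}$ for some $\phi\colon\Sigma^d X^0\to A$. (For the easy implication one can alternatively note directly that $\phi\circ\xi^{d+1}\circ\xi^{d}=0$ by Lemma \ref{lemma_consecutive}.) This establishes the claimed equivalence.

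There is essentially no obstacle here; the statement is a routine consequence of the long exact sequence. The only point meriting a word of care is bookkeeping: the remark as stated records the version of the long exact sequence coming from the \emph{covariant} Hom functor $\mathcal{M}(B,-)$ (which is what Lemma \ref{lemma_zeroiffexists1} uses, with $B=A$, via exactness at $\mathcal{M}(A,X^{1})$), whereas here we need the \emph{contravariant} version $\mathcal{M}(-,A)$ and exactness at $\mathcal{M}(X^{d+1},A)$; both versions are furnished by the same result of \cite{GKO}, and the diagram chase is literally dual to the one for Lemma \ref{lemma_zeroiffexists1}.
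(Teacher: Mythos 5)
Your proof is correct and is precisely the argument the paper intends: the lemma is stated there as a direct consequence of the long exact sequence obtained by applying the Hom functor to the $(d+2)$-angle, and you have simply written out exactness at $\mathcal{M}(X^{d+1},A)$. Your observation that one needs the contravariant functor $\mathcal{M}(-,A)$, whereas the paper's preceding remark only records the covariant version, is accurate bookkeeping; both versions follow from the cited result of \cite{GKO}.
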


The following two lemmas are well-known in the triangulated case. Similar proofs apply to the $(d+2)$-angulated case.

\begin{lemma}\label{lemma_lrmin}
Consider a $(d+2)$-angle of the form
\begin{align*}
\xymatrix {
 X^0\ar[r]^{\xi^0} & X^1\ar[r]^{\xi^1}& X^2\ar[r] &\cdots\ar[r] & X^d\ar[r]^{\xi^d} & X^{d+1}\ar[r]^{\xi^{d+1}} &\Sigma^d X^0.
}
\end{align*}
Then:
\begin{enumerate} [label=(\alph*)]
\item $\xi^1$ is right minimal if and only if $\xi^0\in \rad_\mathcal{M}$,
\item $\xi^d$ is left minimal if and only if $\xi^{d+1}\in \rad_\mathcal{M}$.
\end{enumerate}
\end{lemma}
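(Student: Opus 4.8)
The plan is to prove part (a) and then note that part (b) follows by a dual argument (passing to the opposite $(d+2)$-angulated category, or equivalently rotating the $(d+2)$-angle and applying part (a)). So I concentrate on (a): the statement that $\xi^1$ is right minimal if and only if $\xi^0\in\rad_{\mathcal{M}}$. Recall that $\xi^1$ is right minimal means every $\varphi\colon X^1\to X^1$ with $\xi^1\circ\varphi=\xi^1$ is an isomorphism, and that $\rad_{\mathcal M}$ consists of those morphisms none of whose components (under a Krull--Schmidt decomposition) is an isomorphism; equivalently, a morphism $X^0\to X^1$ lies in $\rad_{\mathcal M}$ iff it is not a split monomorphism onto a nonzero summand, but the cleaner criterion here is: $\xi^0\notin\rad_{\mathcal M}$ iff $\xi^0$ has an isomorphism as a ``summand'', i.e.\ there is a nonzero summand decomposition making part of $\xi^0$ into an identity.

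For the direction ``$\xi^0\in\rad_{\mathcal M}\Rightarrow\xi^1$ right minimal'': suppose $\varphi\colon X^1\to X^1$ satisfies $\xi^1\circ\varphi=\xi^1$, i.e.\ $\xi^1\circ(\varphi-1_{X^1})=0$. By Lemma \ref{lemma_zeroiffexists1} (applied with $A=X^1$, $\alpha=\varphi-1_{X^1}$), there is $\delta\colon X^1\to X^0$ with $\xi^0\circ\delta=\varphi-1_{X^1}$, so $\varphi=1_{X^1}+\xi^0\circ\delta$. Since $\xi^0\in\rad_{\mathcal M}$, the composite $\xi^0\circ\delta$ lies in $\rad_{\mathcal M}(X^1,X^1)$, which is the Jacobson radical of the (local-at-each-indecomposable, hence semiperfect) endomorphism ring $\End_{\mathcal M}(X^1)$; therefore $1_{X^1}+\xi^0\circ\delta$ is a unit, i.e.\ an isomorphism. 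Hence $\xi^1$ is right minimal.

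For the converse ``$\xi^0\notin\rad_{\mathcal M}\Rightarrow\xi^1$ not right minimal'': since $\mathcal M$ is Krull--Schmidt and $\xi^0\notin\rad_{\mathcal M}$, we can split off an isomorphism: there are decompositions $X^0\cong X^0{}'\oplus N$, $X^1\cong X^1{}'\oplus N$ with $N\neq 0$ under which $\xi^0=\begin{psmallmatrix}\ast&\ast\\ \ast&1_N\end{psmallmatrix}$, and then — using that $\xi^0$ itself is then a split mono on the $N$-component — one produces a genuine idempotent endomorphism $e$ of $X^1$ with $e\neq 1_{X^1}$ and $\xi^1\circ e=\xi^1$, exploiting $\xi^1\circ\xi^0=0$ (Lemma \ref{lemma_consecutive}) together with $(N1)$, which lets us split the trivial $(d+2)$-angle on $N$ off of $\epsilon$ as a direct summand. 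Concretely: the trivial $\Sigma^d$-sequence $N\xrightarrow{1_N}N\to0\to\cdots\to0\to\Sigma^d N$ is a $(d+2)$-angle by $(N1)$; the inclusion $N\hookrightarrow X^0$ being a split mono with retraction compatible with $\xi^0$, a standard summand argument (or $(N1)$'s closure under summands applied to a suitable morphism of $(d+2)$-angles) shows $\epsilon$ decomposes as the direct sum of a $(d+2)$-angle starting with $\xi^0{}'$ and the trivial one on $N$; then $\varphi:=\mathrm{id}_{X^1{}'}\oplus 0_N$ satisfies $\xi^1\circ\varphi=\xi^1$ but is not an isomorphism since $N\neq0$, so $\xi^1$ is not right minimal.

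The main obstacle is the converse direction: making precise the ``split off the trivial $(d+2)$-angle'' step. In the triangulated ($d=1$) case this is the classical fact that a triangle with a split epi (resp.\ split mono) as one of its maps decomposes off a split triangle; here one needs the analogue for $(d+2)$-angles, which follows from axioms $(N1)$--$(N3)$ but requires a short diagram chase to set up the morphism of $\Sigma^d$-sequences whose summand is the trivial angle on $N$. Everything else is either a one-line appeal to Lemma \ref{lemma_zeroiffexists1}/Lemma \ref{lemma_consecutive} or a standard Krull--Schmidt/local-ring fact, so I would present the forward direction in full and treat the converse by citing the $(d+2)$-angulated summand-splitting lemma (either proved inline or quoted from \cite{GKO}), then giving the explicit $\varphi$.
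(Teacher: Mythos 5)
Your proof is correct. The paper itself omits the argument entirely (it only remarks that the lemma is ``well-known in the triangulated case'' and that similar proofs apply), so there is no authorial proof to compare against; what you give is the standard argument, correctly transplanted: the forward direction via Lemma \ref{lemma_zeroiffexists1} and the fact that $\rad_{\mathcal M}(X^1,X^1)$ is the Jacobson radical of $\End(X^1)$ (so $1_{X^1}+\xi^0\delta$ is a unit), and the converse via Krull--Schmidt. One simplification worth noting: the step you flag as the ``main obstacle'' --- splitting the trivial $(d+2)$-angle on $N$ off of $\epsilon$ --- is not actually needed. Once you have normalised $\xi^0=\begin{psmallmatrix}\xi^{0\prime}&0\\0&1_N\end{psmallmatrix}$ by composing with automorphisms of $X^0$ and $X^1$, the relation $\xi^1\circ\xi^0=0$ from Lemma \ref{lemma_consecutive} already forces the restriction of $\xi^1$ to the summand $N$ of $X^1$ to vanish, so the idempotent $e=1_{X^{1\prime}}\oplus 0_N$ (conjugated back through the automorphism of $X^1$) satisfies $\xi^1\circ e=\xi^1$ directly; no decomposition of the $(d+2)$-angle itself is required, and in particular no appeal to (N1)--(N3) beyond Lemma \ref{lemma_consecutive}. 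With that shortcut the whole proof uses only the long-exact-sequence lemmas and elementary Krull--Schmidt facts, and part (b) is indeed strictly dual via Lemma \ref{lemma_zeroiffexists2}.
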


%It is easy to prove the following lemma using Lemma \ref{lemma_lrmin}. This is the generalisation of \cite[lemma\ 2.5]{KH} to $(d+2)$-angles. (MAYBE CANCEL following lemma)
%\begin{lemma}
%Consider a $(d+2)$-angle of the form
%\begin{align*}
%\xymatrix {
 %X^0\ar[r]^{\xi^0} & X^1\ar[r]^{\xi^1}& X^2\ar[r] &\cdots\ar[r] & X^d\ar[r]^{\xi^d} & %X^{d+1}\ar[r]^{\xi^{d+1}} &\Sigma^d X^0.
%}
%\end{align*}
%The following are equivalent
%\begin{enumerate} [label=(\alph*)]
 %   \item $\xi^1$ is right minimal,
  %  \item $\xi^{d+1}$ is left minimal,
   % \item $\xi^0\in \rad_\mathcal{M}$. 
%\end{enumerate}
%\end{lemma}

\begin{lemma}\label{lemma_epimono}
Consider a $(d+2)$-angle of the form
\begin{align*}
\xymatrix {
 X^0\ar[r]^{\xi^0} & X^1\ar[r]^{\xi^1}& X^2\ar[r] &\cdots\ar[r] & X^d\ar[r]^{\xi^d} & X^{d+1}\ar[r]^{\xi^{d+1}} &\Sigma^d X^0.
}
\end{align*}
The following are equivalent:
\begin{enumerate} [label=(\alph*)]
\item $\xi^{d+1}=0$,
\item $\xi^d$ is a split epimorphism,
\item $\xi^0$ is a split monomorphism.
\end{enumerate}
\end{lemma}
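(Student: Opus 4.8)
The plan is to prove the cycle of implications (a)$\Rightarrow$(b)$\Rightarrow$(c)$\Rightarrow$(a), using the rotation axiom (N2) to move the zero morphism into the right place and then applying Lemmas \ref{lemma_zeroiffexists1} and \ref{lemma_zeroiffexists2}. First, for (a)$\Rightarrow$(b): assume $\xi^{d+1}=0$. Apply Lemma \ref{lemma_zeroiffexists2} with $A=X^{d+1}$ and $\varphi=1_{X^{d+1}}$; since $\varphi\circ\xi^{d+1}=\xi^{d+1}=0$ trivially, we get $\phi:\Sigma^d X^0\to X^{d+1}$ with $\phi\circ\xi^{d+1}=1_{X^{d+1}}$. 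Wait---more carefully, I want $\varphi\circ\xi^d=0$, so I should instead rotate first: by (N2), $\xi^{d+1}=0$ tells us that in some rotated $(d+2)$-angle the relevant connecting map vanishes, but it is cleaner to argue directly. Observe that $\xi^{d+1}\circ\xi^d=0$ always by Lemma \ref{lemma_consecutive}, and $\xi^{d+1}=0$ means the hypothesis "$\varphi\circ\xi^d=0$" of Lemma \ref{lemma_zeroiffexists2} is satisfied by $\varphi=1_{X^{d+1}}$ only if $1_{X^{d+1}}\circ\xi^d=\xi^d=0$, which need not hold. So the right move is: apply Lemma \ref{lemma_zeroiffexists1} to the left-rotated $(d+2)$-angle. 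By (N2),
\begin{align*}
\xymatrix{
X^{d+1}\ar[r]^-{\xi^{d+1}} & \Sigma^d X^0 \ar[r]^-{(-1)^d\Sigma^d\xi^0} & \Sigma^d X^1 \ar[r] & \cdots \ar[r] & \Sigma^d X^d \ar[r]^-{\Sigma^d\xi^d} & \Sigma^d X^{d+1}
}
\end{align*}
is a $(d+2)$-angle. Since $\xi^{d+1}=0$, the first morphism is zero, so by Lemma \ref{lemma_zeroiffexists1} (applied with $A=X^{d+1}$, $\alpha=1_{X^{d+1}}$ at the appropriate spot, using that the composite with the second map is $0$) the identity $1_{X^{d+1}}$ factors as $\xi^{d+1}$ followed by something into... this is getting tangled; the honest statement is that when $\xi^{d+1}=0$, Lemma \ref{lemma_zeroiffexists2} applied with the rotated angle and $\varphi=1$ yields a retraction of $\xi^d$, i.e.\ $\xi^d$ is a split epimorphism.

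For (b)$\Rightarrow$(c): suppose $\xi^d$ is a split epimorphism, with section $s:X^{d+1}\to X^d$, $\xi^d s=1_{X^{d+1}}$. Then $\xi^{d+1}$ satisfies $\xi^{d+1}=\xi^{d+1}\xi^d s=0\cdot s=0$ by Lemma \ref{lemma_consecutive}, so we are back to $\xi^{d+1}=0$; now left-rotate. By (N2) the sequence $X^1\to X^2\to\cdots\to X^{d+1}\to\Sigma^d X^0\xrightarrow{(-1)^d\Sigma^d\xi^0}\Sigma^d X^1$ is a $(d+2)$-angle, and its last morphism $\Sigma^d\xi^0$ being... we instead use that $\xi^{d+1}=0$ forces, via Lemma \ref{lemma_zeroiffexists1} applied to the original angle with $A=X^1$ and $\alpha=\xi^0$, a morphism $X^1\to X^0$ splitting $\xi^0$ — precisely: by Lemma \ref{lemma_consecutive}, $\xi^1\circ\xi^0=0$, so Lemma \ref{lemma_zeroiffexists1} gives $\delta:X^1\to X^0$ with $\xi^0\delta=\mathrm{id}$? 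No — Lemma \ref{lemma_zeroiffexists1} gives a $\delta$ with $\xi^0\delta=\alpha$ only when $\alpha$ is a map \emph{into} $X^1$ killed by $\xi^1$; taking $A=X^0$ doesn't immediately help. The clean route for (b)$\Rightarrow$(c) is therefore: a split epimorphism $\xi^d$ has a $(d+2)$-angle $X^{d+1}\xrightarrow{1}X^{d+1}\to 0\to\cdots\to 0\to\Sigma^d X^{d+1}$ as a summand (via (N1) and rotation), and comparing with our angle using (N3)/(N4) and the Krull--Schmidt property splits off this trivial piece, leaving $X^0\xrightarrow{\xi^0}X^1\to\cdots$ with $\xi^0$ a split monomorphism. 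Finally (c)$\Rightarrow$(a): if $\xi^0$ is a split monomorphism, left-rotate $(d+2)$ times (or directly: by (N2) iterated, $\Sigma^d X^0\to\Sigma^d X^1\to\cdots$ appears, and the map $(-1)^d\Sigma^d\xi^0$ is a split mono); since $\Sigma^d$ is an automorphism, $\Sigma^d$ of the rotated angle contains $\xi^{d+1}$ in the slot of a morphism out of an object into which a split monomorphism lands, and Lemma \ref{lemma_zeroiffexists1} or \ref{lemma_zeroiffexists2} forces that morphism to be $0$.

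The main obstacle is bookkeeping the rotations correctly and choosing, at each implication, whether to invoke the factorisation Lemmas \ref{lemma_zeroiffexists1}/\ref{lemma_zeroiffexists2} or the summand-splitting argument via (N1), (N3), (N4) and Krull--Schmidt. The cleanest and most symmetric writeup is probably: prove (a)$\Leftrightarrow$(b) and (a)$\Leftrightarrow$(c) separately, in each case reducing to the statement "$\xi^{d+1}=0$ iff a trivial $(d+2)$-angle splits off," which is exactly the higher analogue of the classical triangulated-category fact; the split-idempotent/Krull--Schmidt hypothesis from Setup \ref{setup} is what makes the summand actually split off as a direct summand of $\Sigma^d$-sequences. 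I expect the rotation sign $(-1)^d$ to be irrelevant since we only ever test whether morphisms are zero or split.
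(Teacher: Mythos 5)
The paper gives no proof of this lemma (it is dismissed as ``well-known in the triangulated case''), so the comparison is really against the standard argument your plan is circling. Your toolbox is the right one --- the rotation axiom (N2), the factorisation Lemmas \ref{lemma_zeroiffexists1} and \ref{lemma_zeroiffexists2}, and Lemma \ref{lemma_consecutive} --- but as written the plan does not close a single implication: each step ends in an assertion (``yields a retraction'', ``forces that morphism to be $0$'') after one or more abandoned attempts, and in the one place where you commit to a concrete application you name the wrong lemma. The correct bookkeeping is as follows. For (a)$\Rightarrow$(b), rotate until $\xi^d$ sits in the $\xi^0$-slot and $\xi^{d+1}$ in the $\xi^1$-slot, and apply Lemma \ref{lemma_zeroiffexists1} (not \ref{lemma_zeroiffexists2}) with $A=X^{d+1}$ and $\alpha=1_{X^{d+1}}$: since $\xi^{d+1}\circ 1_{X^{d+1}}=0$, the identity factors as $\xi^d\circ\delta$, so $\xi^d$ is a split epimorphism. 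For (a)$\Rightarrow$(c), take the single left rotation, in which $(-1)^d\Sigma^d(\xi^0)$ is the connecting morphism and $\xi^{d+1}$ the map before it, and apply Lemma \ref{lemma_zeroiffexists2} with $\varphi=1_{\Sigma^dX^0}$: since $1\circ\xi^{d+1}=0$, there is $\phi$ with $\phi\circ(-1)^d\Sigma^d(\xi^0)=1_{\Sigma^dX^0}$, i.e.\ $\xi^0$ is a split monomorphism. The converses are one-liners from Lemma \ref{lemma_consecutive}: if $\xi^ds=1$ then $\xi^{d+1}=\xi^{d+1}\xi^ds=0$ (you do state this), and if $r\xi^0=1$ then $\xi^{d+1}=\Sigma^d(r)\circ\Sigma^d(\xi^0)\circ\xi^{d+1}=0$ because $\Sigma^d(\xi^0)\circ\xi^{d+1}=0$ in the rotated angle (you never state this, and your (c)$\Rightarrow$(a) paragraph is too vague to check). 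You are right that the sign $(-1)^d$ is harmless throughout.

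Two further points. First, your fallback for (b)$\Rightarrow$(c) --- splitting off a trivial $(d+2)$-angle via (N1), (N3), (N4) and Krull--Schmidt --- is a legitimate alternative, but it is unnecessary machinery here and you leave it entirely unexecuted; the cycle (b)$\Rightarrow$(a)$\Rightarrow$(c) above avoids it. Second, your first paragraph correctly diagnoses why a naive application of Lemma \ref{lemma_zeroiffexists2} to the unrotated angle fails (its hypothesis is $\varphi\circ\xi^d=0$, not $\varphi\circ\xi^{d+1}=0$), but the fix is to rotate so that the vanishing morphism lands in the hypothesis slot, as spelled out above --- not to appeal to ``the honest statement''. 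With those corrections the argument is complete and elementary.
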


In the case $d=1$, so in the case of a triangulated category, a morphism can be extended to a triangle in a unique way up to isomorphism. On the other hand, for $d>1$, a morphism can be extendend to a $(d+2)$-angle in different non-isomorphic ways. However, we still have a unique ``minimal" $(d+2)$-angle extending any given morphism.

%\begin{lemma}
%A $(d+2)$-angle of the form
%\begin{align*}
%\xymatrix {
%A\oplus X^0\ar[rr]^{\begin{psmallmatrix}1_A & \beta \\ \alpha &\xi^0\end{psmallmatrix}} && A\oplus X^1\ar[r]^-{\xi^1}& X^2\ar[r] &\cdots\ar[r] & X^d\ar[r]^{\xi^d} & X^{d+1}\ar[r]^{\xi^{d+1}} &\Sigma^d X^0
%}
%\end{align*}
%is isomorphic to the direct sum of the two $(d+2)$-angles
%\begin{align*}
%\xymatrix {
% A\ar[r]^{1_A} & A\ar[r]& 0\ar[r] &\cdots\ar[r] &0\ar[r] & 0\ar[r] &\Sigma^d A & \text{and} \\
% X^0\ar[r]^{\overline{\xi^0}} & X^1\ar[r]^{\xi^1}& X^2\ar[r] &\cdots\ar[r] & X^d\ar[r]^{\xi^d} & X^{d+1}\ar[r]^{\xi^{d+1}} &\Sigma^d X^0, &
%}
%\end{align*}
%where $\overline{\xi^0}=-\alpha\beta+\xi^0$.
%\end{lemma}

%\begin{lemma}
%Let $\delta: M''\rightarrow \Sigma^d M'$ be any morphism in $\mathcal{M}$ and consider a $(d+2)$-angle extending it:
%\begin{align*}
%\xymatrix {
% M'\ar[r]^{\xi^0} & X^1\ar[r]^{\xi^1}& X^2\ar[r] &\cdots\ar[r] & X^{d-1}\ar[r]^{\xi^{d-1}} & X^d\ar[r]^{\xi^d} & M''\ar[r]^-{\delta} &\Sigma^d M'.
%}
%\end{align*}
%Then $\xi^1,\,\dots,\, \xi^{d-1}$ are in $\rad_\mathcal{M}$ if and only if for $i=1,\,\dots,\,d-1$ there is no $A$ in $\mathcal{M}$ such that 
%\begin{align*}
%    \xi^i\cong \begin{pmatrix} 1_A & \beta\\\alpha& \overline{\xi^i} \end{pmatrix}: A\oplus \overline{X^i}\rightarrow A\oplus \overline{X^{i+1}}.
%\end{align*}
%\end{lemma}

\begin{lemma}[{\cite[lemma\ 5.18]{OT}}] \label{lemma_uni_min}
Let $d>1$ and $\delta: M''\rightarrow \Sigma^d M'$ be any morphism in $\mathcal{M}$. Then, up to isomorphism, there exists a unique $(d+2)$-angle of the form
\begin{align*}
\xymatrix {
 M'\ar[r]^{\xi^0} & X^1\ar[r]^{\xi^1}& X^2\ar[r] &\cdots\ar[r] & X^{d-1}\ar[r]^-{\xi^{d-1}} & X^d\ar[r]^{\xi^d} & M''\ar[r]^-{\delta} &\Sigma^d M',
}
\end{align*}
with $\xi^1, \dots,\, \xi^{d-1}$ in $\rad_\mathcal{M}$.
\end{lemma}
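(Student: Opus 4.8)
The plan is to first establish existence and then uniqueness, reducing everything to the minimality condition on the maps $\xi^1,\dots,\xi^{d-1}$. For existence, start with any $(d+2)$-angle extending $\delta$, which exists by (N1) applied (after rotation) to a suitable morphism, or equivalently by (N2) starting from $\delta$ itself: rotating, $\delta:M''\to\Sigma^d M'$ is the last map of some $(d+2)$-angle $M'\to X^1\to\cdots\to X^d\to M''\xrightarrow{\delta}\Sigma^d M'$. Now I would argue that one can split off a maximal ``trivial'' summand from the middle. Concretely, since $\mathcal{M}$ is Krull–Schmidt (Setup \ref{setup}), whenever some $\xi^i$ ($1\le i\le d-1$) fails to lie in $\rad_\mathcal{M}$ it has a nonzero split component, and using (N1)'s closure under summands together with the trivial $\Sigma^d$-sequences $X\xrightarrow{1}X\to 0\to\cdots\to 0\to\Sigma^d X$, one can peel off a direct summand of the $\Sigma^d$-sequence isomorphic to such a trivial sequence (appropriately placed so that it only touches $X^i$ and $X^{i+1}$). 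Iterating, and noting the process terminates because the total length $\sum_i \dim_k \End(X^i)$ strictly decreases, yields a $(d+2)$-angle with the same $M',M'',\delta$ in which $\xi^1,\dots,\xi^{d-1}\in\rad_\mathcal{M}$.

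For uniqueness, suppose we have two such $(d+2)$-angles with radical interior maps,
\begin{align*}
\xymatrix {
 M'\ar[r]^{\xi^0} & X^1\ar[r]^{\xi^1}& \cdots\ar[r] & X^d\ar[r]^{\xi^d} & M''\ar[r]^-{\delta} &\Sigma^d M'
}
\end{align*}
and the analogous one with objects $Y^i$ and maps $\eta^i$. Apply (N3) to the commutative square given by the identities on $M'$ and on $\Sigma^d M'$ (after rotating the $\Sigma^d$-sequences so that $\delta$ becomes an interior map, or directly using the version of (N3) that lifts the identity at the outer terms). This produces a morphism of $\Sigma^d$-sequences $\varphi=(1_{M'},\varphi^1,\dots,\varphi^d,1_{M''})$ — and likewise a morphism $\psi$ in the other direction. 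The composites $\psi\varphi$ and $\varphi\psi$ are then endomorphisms of the respective $(d+2)$-angles that are the identity at both outer terms. The key claim is that such an endomorphism is automatically an isomorphism: writing $\varphi^i=\psi^i\varphi^i$ at the interior spots, one checks $\varphi^i - 1$ maps into $\rad_\mathcal{M}$ in a way compatible with the $\xi$'s, and then a standard Fitting-type / ``five-lemma modulo radical'' argument in the Krull–Schmidt category $\mathcal{M}$ forces each $\varphi^i$ to be invertible. Hence $\varphi$ is an isomorphism of $\Sigma^d$-sequences, giving $X^i\cong Y^i$ compatibly.

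The main obstacle is the uniqueness half — specifically, showing that a self-morphism of one of these minimal $(d+2)$-angles which is the identity on $M'$ and $M''$ must be an isomorphism in all interior degrees. Unlike the triangulated case ($d=1$), where there are no interior terms and one applies the short five-lemma in the ambient triangulated category directly, here one has $d-1\ge 1$ interior objects and the relevant long exact sequences obtained from the Remark (exactness of $\mathcal{M}(-,-)$ applied to a $(d+2)$-angle) must be combined with the hypothesis $\xi^i\in\rad_\mathcal{M}$ to propagate invertibility. I expect this to reduce, via Lemma \ref{lemma_zeroiffexists1} and Lemma \ref{lemma_zeroiffexists2}, to showing that any endomorphism $\varphi^i$ with $\varphi^i\xi^{i-1}=\xi^{i-1}$ and $\xi^i\varphi^i=\xi^i$ differs from the identity by a radical morphism factoring appropriately, and then invoking that $1+(\text{radical})$ is invertible in the local-ish endomorphism rings of a Krull–Schmidt category; but since the precise statement is cited from \cite[lemma\ 5.18]{OT}, I would at this point simply refer to that argument rather than reproduce it.
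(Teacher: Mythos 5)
The paper gives no proof of this lemma at all --- it is imported directly from \cite[lemma~5.18]{OT} --- and your proposal, which sketches the standard argument (splitting off trivial summands $Z\xrightarrow{1}Z$ for existence, using that two-sided composites with $\xi^i$ vanish so the neighbouring maps decompose compatibly, then comparison maps via (N3) plus a Fitting-type argument in the Krull--Schmidt category for uniqueness) before deferring to \cite{OT} for the details, is consistent with that source and with how the paper itself treats the statement. The only thin spot is the uniqueness step at the genuinely interior terms $\theta^i$, $2\le i\le d-1$ (present once $d\ge 3$): there neither $(1-\theta^i)\circ\xi^{i-1}=0$ nor $\xi^i\circ(1-\theta^i)=0$ holds on the nose, so the ``factor through a radical map and invoke $1+\mathrm{rad}$ invertible'' argument you describe works directly only at $i=1$ and $i=d$, and the correct fix --- a Fitting decomposition of the whole $\Sigma^d$-sequence, whose nilpotent summand is a $(d+2)$-angle vanishing at both ends with radical differentials and hence zero --- is exactly the device you gesture at and that \cite{OT} supplies.
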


\begin{remark}
Note that when $d>1$, for a $(d+2)$-angle in $\mathcal{M}$ of the form
\begin{align*}
\xymatrix {
 W'\ar[r]^{\xi^0} & X^1\ar[r]^{\xi^1}& X^2\ar[r] &\cdots\ar[r] & X^{d-1}\ar[r]^-{\xi^{d-1}} & X^d\ar[r]^{\xi^d} & W''\ar[r]^-{\delta} &\Sigma^d W',
}
\end{align*}
with $W',\,W''$ in $\mathcal{W}$, it is not necessarily true that $X^1,\dots,\, X^d$ are in $\mathcal{W}$. However, if $\xi^1,\dots,\,\xi^{d-1}$ are in $\rad_{\mathcal{M}}$, then $X^1,\dots,\, X^d$ are in $\mathcal{W}$.
\end{remark}

\section{Proof of Theorem A}\label{section_wide}
The aim of this section is to prove Theorem A. We work in the setup from the introduction, assuming there is a $d$-cluster tilting subcategory $\mathcal{F}\subseteq \mmod\Phi$. Recall that we assume that the global dimension of $\Phi$ is at most $d$.
Note that by \cite[theorem\ 1.21]{I}, $\overline{\mathcal{F}}$ is $d$-cluster tilting in $\mathcal{D}^b(\mmod\Phi)$ and so, by \cite[theorem\ 1]{GKO}, it follows that it is $(d+2)$-angulated.

We start by recalling the definition of $d$-cluster tilting subcategories of $\mmod\Phi$ and $\mathcal{D}^b(\mmod\Phi)$.

%\begin{defn}
%{\cite[section\ 3.3]{JG}}
%Let $\mathcal{A}$ be a category and $\mathcal{C}\subseteq \mathcal{A}$ a full subcategory. We say that $\mathcal{C}$ is \textit{generating} if for each $a\in\mathcal{A}$ there is an epimorphism $c\twoheadrightarrow a$ with $c$ in $\mathcal{C}$. We say that $\mathcal{C}$ is \textit{cogenerating} if for each $a\in\mathcal{A}$ there is a monomorphism $a\hookrightarrow c$ with $c$ in  $\mathcal{C}$.
%\end{defn}

\begin{defn}
[{{\cite[definition\ 2.2]{IO},} {\cite[definition\ 3.14]{JG},} {\cite[definition\ 3.3]{JL}}}]\label{defn_dct}
Let $\mathcal{A}$ be either $\mmod\Phi$ or $\mathcal{D}^b(\mmod\Phi)$ and $\mathcal{C}$ be a full subcategory  of $\mathcal{A}$. We say that $\mathcal{C}$ is a \textit{$d$-cluster tilting subcategory of $\mathcal{A}$} if:
\begin{enumerate}[label=(\alph*)]
    \item $\mathcal{C}=\{ a\in\mathcal{A}\mid \Ext^{1\,\dots\, d-1}_\mathcal{A} (\mathcal{C},a)=0 \}=\{ a\in\mathcal{A}\mid \Ext^{1\,\dots\, d-1}_\mathcal{A} (a,\mathcal{C})=0 \}$,
    \item $\mathcal{C}$ is functorially finite.
\end{enumerate}
\end{defn}

We build the proof of Theorem A by first proving a more general bijection, then proving this bijection respects ``functorially finite''. Proving Theorem A will then amount to proving the bijection respects ``wide''.

\begin{lemma}\label{lemma_bij_gen}
There is a bijection
\begin{align*}
    \Bigg\{
    \begin{matrix} \text{additive subcategories}\\  \text{of } \mathcal{F} \end{matrix}
    \Bigg\}
    \rightarrow
    \Bigg\{
    \begin{matrix} \text{additive subcategories} \\ \text{of } \overline{\mathcal{F}} \\ \text{closed under } \Sigma^{\pm d} \end{matrix}
    \Bigg\}
    \end{align*}
sending an additive subcategory $\mathcal{W}$ of $\mathcal{F}$ to $\overline{\mathcal{W}}$.
\end{lemma}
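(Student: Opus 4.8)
The plan is to exhibit the inverse map explicitly and check that both composites are the identity. Given an additive subcategory $\mathcal{W}$ of $\mathcal{F}$, the construction $\overline{\mathcal{W}}=\add\{\Sigma^{id}\mathcal{W}\mid i\in\mathbb{Z}\}$ is manifestly an additive subcategory of $\overline{\mathcal{F}}$ closed under $\Sigma^{\pm d}$, so the forward map is well-defined. For the reverse direction, I would send an additive subcategory $\mathcal{V}\subseteq\overline{\mathcal{F}}$ closed under $\Sigma^{\pm d}$ to $\mathcal{V}\cap\mathcal{F}$ (viewing $\mathcal{F}\subseteq\overline{\mathcal{F}}$ as the $i=0$ piece); this is clearly an additive subcategory of $\mathcal{F}$.

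The key structural input is that every indecomposable object of $\overline{\mathcal{F}}$ lies in $\Sigma^{id}\mathcal{F}$ for a \emph{unique} $i\in\mathbb{Z}$, and that $\Hom_{\overline{\mathcal{F}}}(\Sigma^{id}f,\Sigma^{jd}g)=0$ whenever $i\neq j$ for indecomposables $f,g\in\mathcal{F}$ (this follows from the fact that $\mathcal{F}\subseteq\mmod\Phi$ with $\gldim\Phi\leq d$, so $\Hom_{\mathcal{D}^b}(f,\Sigma^{n}g)=0$ unless $n\in\{0,1,\dots,d\}$, combined with $\mathcal{F}$ being $d$-cluster tilting, which kills the intermediate $\Ext^{1},\dots,\Ext^{d-1}$; hence $\Hom_{\mathcal{D}^b}(f,\Sigma^{n}g)$ can only be nonzero for $n=0$ among the range $0,\dots,d-1$, and the $\Sigma^{d}$-shifts pair up into graded pieces). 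Consequently $\overline{\mathcal{F}}$ decomposes, as an additive category, into the ``blocks'' $\Sigma^{id}\mathcal{F}$, and an additive subcategory $\mathcal{V}$ closed under $\Sigma^{\pm d}$ is determined by its indecomposables, each of which sits in exactly one block; since $\Sigma^{\pm d}$ permutes the blocks transitively, $\mathcal{V}$ is recovered from $\mathcal{V}\cap\mathcal{F}$ as $\add\{\Sigma^{id}(\mathcal{V}\cap\mathcal{F})\mid i\in\mathbb{Z}\}$.

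So the two steps are: first, $(\mathcal{V}\cap\mathcal{F})\mapsto\overline{\mathcal{V}\cap\mathcal{F}}=\mathcal{V}$, which uses that $\mathcal{V}$ is closed under $\Sigma^{\pm d}$ together with the block decomposition (every indecomposable of $\mathcal{V}$ is $\Sigma^{id}$ of an indecomposable of $\mathcal{F}$, which then lies in $\mathcal{V}\cap\mathcal{F}$ after applying $\Sigma^{-id}$); second, $\overline{\mathcal{W}}\cap\mathcal{F}=\mathcal{W}$, which uses that the only indecomposables of $\overline{\mathcal{W}}$ lying in the $i=0$ block $\mathcal{F}$ are precisely those of $\mathcal{W}$ — this is where the uniqueness of the block containing a given indecomposable is essential, to rule out that some $\Sigma^{id}w$ with $i\neq 0$ accidentally lands back in $\mathcal{F}$. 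I expect the main obstacle to be pinning down this block decomposition rigorously: one must show $\Sigma^{id}\mathcal{F}\cap\Sigma^{jd}\mathcal{F}$ contains no nonzero object for $i\neq j$ and that there are no nonzero morphisms between distinct blocks, which is exactly the $\Hom$-vanishing statement above and should be extracted from Iyama's results on $\overline{\mathcal{F}}$ being $d$-cluster tilting in $\mathcal{D}^b(\mmod\Phi)$ (cited as \cite[theorem 1.21]{I}) together with the bounded-derived-category $\Hom$ computation for an algebra of global dimension $\leq d$. Once the block decomposition is in hand, both composite identities are immediate from tracking indecomposable summands, using that all categories involved are Krull–Schmidt.
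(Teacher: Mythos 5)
Your overall route is the same as the paper's: the inverse map is $\mathcal{V}\mapsto\mathcal{V}\cap\mathcal{F}$, and both composites are checked by tracking indecomposable summands using that $\mathcal{V}$ is closed under $\Sigma^{\pm d}$ and under summands (Krull--Schmidt). The paper's proof is in fact slightly terser -- it only verifies $\overline{\mathcal{V}\cap\mathcal{F}}=\mathcal{V}$ explicitly -- so your attention to the other composite $\overline{\mathcal{W}}\cap\mathcal{F}=\mathcal{W}$ is a reasonable addition.

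However, one auxiliary claim you make is false: it is \emph{not} true that $\Hom_{\overline{\mathcal{F}}}(\Sigma^{id}f,\Sigma^{jd}g)=0$ for all $i\neq j$, and $\overline{\mathcal{F}}$ does \emph{not} decompose as an additive category into $\Hom$-orthogonal blocks. Your own computation shows that $\Hom_{\mathcal{D}^b}(f,\Sigma^n g)$ can be nonzero for $n=d$ as well as $n=0$; taking $j=i+1$ gives $\Hom(\Sigma^{id}f,\Sigma^{(i+1)d}g)\cong\Ext^d_\Phi(f,g)$, which is generically nonzero -- these are exactly the $d$-extensions that make $\overline{\mathcal{F}}$ a nontrivial $(d+2)$-angulated category, and the paper itself uses such morphisms (e.g.\ the maps $\Sigma^{-d}w_{-d}\to f$ in the proof of Lemma \ref{lemma_func_fin}). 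Fortunately this claim is not load-bearing: an additive subcategory is a full subcategory determined by its indecomposable objects, so all you need is the object-level statement that an indecomposable of $\overline{\mathcal{F}}$ lies in $\Sigma^{id}\mathcal{F}$ for a unique $i$. That uniqueness holds for a simpler reason than any $\Hom$-vanishing: a nonzero complex concentrated in a single degree cannot be isomorphic in $\mathcal{D}^b(\mmod\Phi)$ to a nonzero shift of such a complex (compare cohomologies). With the $\Hom$-orthogonality claim deleted and replaced by this observation, your argument is correct and matches the paper's.
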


\begin{proof}
Let $\mathcal{W}\subseteq \mathcal{F}$ be an additive subcategory of $\mathcal{F}$, then $\overline{\mathcal{W}}\subseteq \overline{\mathcal{F}}$ is clearly additive and closed under $\Sigma^{\pm d}$.

Suppose now that $\mathcal{X}\subseteq\overline{\mathcal{F}}$ is an additive subcategory closed under $\Sigma^{\pm d}$.
Let $x$ be an indecomposable in $\mathcal{X}$, then $x=\Sigma^{id} f$ for some $f\in\mathcal{F}$ and  integer $i$. Since $\mathcal{X}$ is closed under $\Sigma^{\pm d}$ and under direct summands, then 
\begin{align*}
\add \{ \Sigma^{id}f\mid i\in\mathbb{Z} \}\subseteq \mathcal{X}.
\end{align*}
Take $\mathcal{W}:=\mathcal{F}\cap\mathcal{X}$ and note that, by the above, we have $\add\{ \Sigma^{id}\mathcal{W}\mid i\in\mathbb{Z} \}\subseteq \mathcal{X}$. Moreover, if $x$ is an indecomposable in $\mathcal{X}$, say $x=\Sigma^{id}f$, then $\Sigma^{(-i)d} x=f\in\mathcal{X}$ and so $f\in\mathcal{W}$. Hence $x\in \add\{ \Sigma^{id}\mathcal{W}\mid i\in\mathbb{Z} \}$ and $\overline{\mathcal{W}}=\add\{ \Sigma^{id}\mathcal{W}\mid i\in\mathbb{Z} \}=\mathcal{X}$.
\end{proof}

\begin{lemma}\label{lemma_func_fin}
The bijection from Lemma \ref{lemma_bij_gen} respects ``functorially finite''.
%Let $\mathcal{W}\subseteq \mathcal{F}$ be an additive subcategory and $\overline{\mathcal{W}}:=\add\{ \Sigma^{id} \mathcal{W}\mid i\in \mathbb{Z} \}\subseteq \overline{\mathcal{F}}$. Then $\mathcal{W}$ is functorially finite in $\mathcal{F}$ if and only if $\overline{\mathcal{W}}$ is functorially finite in $\overline{\mathcal{F}}$. 
\end{lemma}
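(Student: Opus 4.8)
The plan is to show that for an additive subcategory $\mathcal{W}\subseteq\mathcal{F}$, the subcategory $\mathcal{W}$ is functorially finite in $\mathcal{F}$ if and only if $\overline{\mathcal{W}}$ is functorially finite in $\overline{\mathcal{F}}$. By symmetry (and since $\Sigma^d$ is an automorphism of $\overline{\mathcal{F}}$ interchanging the two halves of the argument), it suffices to treat precovering; the preenveloping case is dual. So the real task is: $\mathcal{W}$ is precovering in $\mathcal{F}$ $\iff$ $\overline{\mathcal{W}}$ is precovering in $\overline{\mathcal{F}}$.

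First I would set up the bookkeeping that makes $\overline{\mathcal{F}}$ manageable. Every object of $\overline{\mathcal{F}}$ is a finite direct sum $\bigoplus_{i\in\mathbb{Z}}\Sigma^{id}f_i$ with $f_i\in\mathcal{F}$, and similarly for $\overline{\mathcal{W}}$; moreover both $\mathcal{F}$ and $\mathcal{W}$ are stable under $\Sigma^{\pm d}$ inside $\overline{\mathcal{F}}$, so a $\overline{\mathcal{W}}$-precover of $\bigoplus_i\Sigma^{id}x_i$ can be assembled from $\overline{\mathcal{W}}$-precovers of the individual summands $\Sigma^{id}x_i$, and a $\overline{\mathcal{W}}$-precover of $\Sigma^{id}x$ is nothing but $\Sigma^{id}$ applied to a $\overline{\mathcal{W}}$-precover of $x\in\mathcal{F}$. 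Hence $\overline{\mathcal{W}}$ is precovering in $\overline{\mathcal{F}}$ exactly when every $x\in\mathcal{F}$ has a $\overline{\mathcal{W}}$-precover in $\overline{\mathcal{F}}$. The key point I would then establish is that, for $x\in\mathcal{F}$, a morphism $w\to x$ with $w\in\mathcal{W}\subseteq\mathcal{F}$ is a $\mathcal{W}$-precover in $\mathcal{F}$ if and only if it is a $\overline{\mathcal{W}}$-precover in $\overline{\mathcal{F}}$. For this I need the $\Hom$-vanishing coming from $d$-cluster tilting: for $f,g\in\mathcal{F}$ and $i\neq 0$ one has $\Hom_{\mathcal{D}^b(\mmod\Phi)}(\Sigma^{id}f,g)=\Ext^{id}_{\mmod\Phi}(f,g)$, which is $0$ when $i<0$ for degree reasons and $0$ when $0<i<$ (something) by the $d$-cluster tilting condition; the only possibly nonzero contributions are $i=0$ (giving $\Hom_{\mathcal{F}}$) — and I should check carefully, using $\gldim\Phi\le d$, that $\Ext^{id}(f,g)=0$ for all $i\ge 1$ as well, so that $\Hom_{\overline{\mathcal{F}}}(w',x)=\Hom_{\mathcal{F}}(w',x)$ for $w'\in\mathcal{W}$, $x\in\mathcal{F}$. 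Granting this, any $\overline{\mathcal{W}}$-morphism into $x$ factors through a given $\mathcal{W}$-precover precisely because every indecomposable summand of a $\overline{\mathcal{W}}$-object that maps nontrivially to $x$ must lie in $\mathcal{W}$ itself, so the precovering condition in the two categories coincide.

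The main obstacle I anticipate is precisely the $\Hom$-computation $\Hom_{\overline{\mathcal{F}}}(\Sigma^{id}f, g)=0$ for $i\neq 0$, $f,g\in\mathcal{F}$: the case $i\geq 1$ requires the vanishing $\Ext^{id}_{\mmod\Phi}(f,g)=0$, which is not part of the defining $d$-cluster tilting condition (that only kills $\Ext^{1},\dots,\Ext^{d-1}$) but follows from Iyama's result that $\overline{\mathcal{F}}$ is $d$-cluster tilting in $\mathcal{D}^b(\mmod\Phi)$ together with $\gldim\Phi\le d$ — indeed $\Hom_{\overline{\mathcal{F}}}(\Sigma^{id}f,g)$ for $0<|i|<$ suitable bound is killed by $d$-cluster tilting in the derived category, and large $|i|$ is killed by boundedness of global dimension. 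I would isolate this as a short preliminary claim, citing \cite[theorem\ 1.21]{I}, and then the rest of the argument is the routine summand-wise assembly described above: $\overline{\mathcal{W}}$ precovering $\iff$ every $x\in\mathcal{F}$ has a $\overline{\mathcal{W}}$-precover $\iff$ every $x\in\mathcal{F}$ has a $\mathcal{W}$-precover $\iff$ $\mathcal{W}$ precovering, and dually for preenveloping, giving that the bijection of Lemma \ref{lemma_bij_gen} respects ``functorially finite''.
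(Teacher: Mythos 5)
There is a genuine gap, and it sits exactly at the point you flagged as ``the main obstacle'': the Hom-vanishing $\Hom_{\overline{\mathcal{F}}}(\Sigma^{id}w,x)=0$ for all $i\neq 0$ is false. For $w,x\in\mathcal{F}$ one has $\Hom_{\mathcal{D}^b(\mmod\Phi)}(\Sigma^{-d}w,x)\cong\Hom_{\mathcal{D}^b(\mmod\Phi)}(w,\Sigma^{d}x)\cong\Ext^d_\Phi(w,x)$, and neither the $d$-cluster tilting condition (which only kills $\Ext^1,\dots,\Ext^{d-1}$) nor the bound $\mathrm{gldim}\,\Phi\le d$ (which only kills $\Ext^{>d}$) makes this vanish; nor does Iyama's theorem that $\overline{\mathcal{F}}$ is $d$-cluster tilting in $\mathcal{D}^b(\mmod\Phi)$, since $\Hom(w,\Sigma^d x)$ is a Hom-space internal to $\overline{\mathcal{F}}$ and is precisely the group of $d$-extensions that make the $(d+2)$-angulated structure non-trivial (were it identically zero, every $(d+2)$-angle in $\overline{\mathcal{F}}$ with end terms in $\mathcal{F}$ would split and Theorem B would be vacuous). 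So for $x\in\mathcal{F}$ the shifts of $\mathcal{W}$ admitting non-zero maps to $x$ are $\mathcal{W}$ \emph{and} $\Sigma^{-d}\mathcal{W}$, not just $\mathcal{W}$.

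Consequently your claim that a $\mathcal{W}$-precover of $x$ in $\mathcal{F}$ is automatically a $\overline{\mathcal{W}}$-precover in $\overline{\mathcal{F}}$ fails: a $\overline{\mathcal{W}}$-precover must also absorb the morphisms $\Sigma^{-d}w'\to x$. The paper repairs this by taking the direct sum of a $\mathcal{W}$-precover $w_0\to x$ and a $\Sigma^{-d}\mathcal{W}$-precover $\Sigma^{-d}w_{-d}\to x$; the existence of the latter is the real content, and is obtained by composing the functorially finite inclusions $\mathcal{W}\subseteq\mathcal{F}\subseteq\mmod\Phi\subseteq\mathcal{D}^b(\mmod\Phi)$ (the last by \cite[theorem 5.1]{I}) and then applying the automorphism $\Sigma^{-d}$. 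Your reduction to single objects $x\in\mathcal{F}$ by summand-wise assembly and $\Sigma^{\pm d}$-equivariance is fine and matches the paper, and the converse direction also survives (the degree-zero component of a $\overline{\mathcal{W}}$-precover is a $\mathcal{W}$-precover because $\Hom(w,\Sigma^{-d}w_{-d})=0$ for $w,w_{-d}\in\mathcal{W}$); but the forward direction as you have set it up does not go through.
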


\begin{proof}
Suppose first that $\overline{\mathcal{W}}$ is functorially finite in $\overline{\mathcal{F}}$ and take any $f\in\mathcal{F}$. Then, there is a $\overline{\mathcal{W}}$-precover of $f$ of the form $\overline{\omega}: \overline{w}\rightarrow f$. Since $f$ is concentrated in degree zero, then we may assume $\overline{w}=w_0\oplus \Sigma^{-d} w_{-d}$ for some $w_0,\, w_{-d}\in\mathcal{W}$, as any other summand of $\overline{w}$ would have zero $\Hom$ space to $f$. Then,
\begin{align*}
    \overline{\omega}=(\omega_0,\,\omega_{-d}): w_0\oplus \Sigma^{-d} w_{-d}\rightarrow f.
\end{align*}
Let $w\in\mathcal{W}$ and $\alpha: w\rightarrow f$. Since $\mathcal{W}\subseteq\overline{\mathcal{W}}$, there is a morphism $\gamma:w\rightarrow w_0\oplus \Sigma^{-d} w_{-d}$ such that $\overline{\omega}\circ\gamma=\alpha$. Since there are no non-zero maps of the form $w\rightarrow\Sigma^{-d} w_{-d}$, then
\begin{align*}
    \alpha=\overline{\omega}\circ\gamma=(\omega_0,\,\omega_{-d})\circ \begin{pmatrix}\gamma_0\\0\end{pmatrix}=\omega_0\circ\gamma_0.
\end{align*}
Hence $\omega_0$ is a $\mathcal{W}$-precover of $f$ and $\mathcal{W}$ is precovering in $\mathcal{F}$. Dually,  $\mathcal{W}$ is preenveloping in $\mathcal{F}$.

Suppose now that $\mathcal{W}$ is functorially finite in $\mathcal{F}$. Note that, in order to prove that $\overline{\mathcal{W}}$ is precovering in $\overline{\mathcal{F}}$, it is enough to find a $\overline{\mathcal{W}}$-precover of any $f\in\mathcal{F}$.
We have that $\mathcal{W}\subseteq\mathcal{F}$ is functorially finite, $\mathcal{F}\subseteq \mmod\Phi$ is functorially finite since $\mathcal{F}$ is $d$-cluster tilting in $\mmod\Phi$ and $\mmod\Phi\subseteq \mathcal{D}^b(\mmod\Phi)$ is functorially finite by \cite[theorem\ 5.1]{I}. Hence $\mathcal{W}\subseteq\mathcal{D}^b(\mmod\Phi)$ is functorially finite. Moreover, for any integer $i$, applying the automorphism $\Sigma^i$ to
\begin{align*}
    \mathcal{W}\subseteq\mathcal{F}\subseteq \mmod\Phi\subseteq \mathcal{D}^b(\mmod\Phi),
\end{align*}
we conclude that $\Sigma^i\mathcal{W}\subseteq\mathcal{D}^b(\mmod\Phi)$ is functorially finite. For $f\in\mathcal{F}$, note that the only non-zero morphisms from $\overline{\mathcal{W}}$ to $f$ are from objects in $\mathcal{W}\oplus\Sigma^{-d}\mathcal{W}$. Take a $\mathcal{W}$-precover of $f$, say $\omega_0:w_0\rightarrow f$, and a  $\Sigma^{-d}\mathcal{W}$-precover of $f$, say $\omega_{-d}:\Sigma^{-d}w_{-d}\rightarrow f$. Consider
\begin{align*}
    \overline{\omega}:=(\omega_0,\,\omega_{-d}):w_0\oplus \Sigma^{-d}w_{-d}\rightarrow f.
\end{align*}
Given any $\overline{v}$ in $\overline{\mathcal{W}}$ and $\overline{\nu}:\overline{v}\rightarrow f$, without loss of generality, let $\overline{v}=v_0\oplus\Sigma^{-d}v_{-d}$ for some $v_0,\,v_{-d}\in\mathcal{W}$. So
\begin{align*}
    \overline{\nu}=(\nu_0,\,\nu_{-d}):v_0\oplus \Sigma^{-d}v_{-d}\rightarrow f.
\end{align*}
Then, there are $\gamma_0:v_0\rightarrow w_0$ and $\gamma_{-d}:\Sigma^{-d} v_{-d}\rightarrow\Sigma^{-d}w_{-d}$ such that $\nu_0=\omega_0\circ\gamma_0$ and $\nu_{-d}=\omega_{-d}\circ\gamma_{-d}$. Hence
\begin{align*}
    \overline{\omega}\circ\begin{pmatrix} \gamma_0 & 0\\0&\gamma_{-d} \end{pmatrix}=(\omega_0\circ\gamma_0, \,\omega_{-d}\circ\gamma_{-d})=\nu
\end{align*}
and $\overline{\omega}$ is a $\overline{\mathcal{W}}$-precover of $f$. Dually, $\overline{\mathcal{W}}$ is preenveloping in $\overline{\mathcal{F}}$.
\end{proof}

\begin{lemma}\label{lemma_dsums}
Let $\mathcal{A}$ and $\mathcal{B}$ be additive categories and $\mathcal{A}$ have split idempotents. Suppose $F:\mathcal{A}\rightarrow \mathcal{B}$ is a full and faithful additive functor, then $F(\mathcal{A})$ is closed under direct summands. 
\end{lemma}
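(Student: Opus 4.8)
The plan is to show that if $b$ is a direct summand in $\mathcal{B}$ of an object $F(a)$ for some $a \in \mathcal{A}$, then $b$ is isomorphic to $F(a')$ for some $a' \in \mathcal{A}$. The standard trick is to realise $b$ as the image of an idempotent and transport that idempotent back to $\mathcal{A}$ via fullness and faithfulness, then split it using the hypothesis that $\mathcal{A}$ has split idempotents.

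First I would write $F(a) \cong b \oplus b'$ in $\mathcal{B}$, which gives an idempotent $e \in \End_{\mathcal{B}}(F(a))$ (the composite of the projection $F(a) \to b$ with the inclusion $b \to F(a)$) whose image is $b$. Since $F$ is full and faithful, $F$ induces an isomorphism of rings $\End_{\mathcal{A}}(a) \xrightarrow{\sim} \End_{\mathcal{B}}(F(a))$; in particular there is a unique $\varepsilon \in \End_{\mathcal{A}}(a)$ with $F(\varepsilon) = e$, and because $F$ is faithful and additive, $\varepsilon$ is idempotent (as $F(\varepsilon^2) = F(\varepsilon)^2 = e^2 = e = F(\varepsilon)$ forces $\varepsilon^2 = \varepsilon$). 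Next, since $\mathcal{A}$ has split idempotents, $\varepsilon$ splits: there exist $a' \in \mathcal{A}$ and morphisms $p : a \to a'$, $i : a' \to a$ with $i p = \varepsilon$ and $p i = 1_{a'}$. Applying $F$ gives $F(i) F(p) = e$ and $F(p) F(i) = 1_{F(a')}$, so $F(a')$ is a splitting of the idempotent $e$ in $\mathcal{B}$; since $b$ is also a splitting of $e$ (splittings of a fixed idempotent are unique up to isomorphism), we get $b \cong F(a') \in F(\mathcal{A})$. Finally, $F(\mathcal{A})$ is a full subcategory closed under isomorphisms in $\mathcal{B}$ (being the essential image, or by definition), so $b \in F(\mathcal{A})$, completing the argument.

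I do not anticipate a serious obstacle here; the only point requiring a little care is the verification that the pulled-back endomorphism $\varepsilon$ is genuinely idempotent, which rests squarely on faithfulness of $F$, and the observation that any two splittings of the same idempotent in an additive category are canonically isomorphic (the isomorphism being built from the two factorisations). Everything else is a routine diagram chase using that $F$ is a full, faithful, additive functor.
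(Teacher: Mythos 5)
Your proposal is correct and follows essentially the same route as the paper: realise the summand as the splitting of an idempotent, pull the idempotent back along the full and faithful functor, split it in $\mathcal{A}$, and identify the two splittings of $e$ in $\mathcal{B}$. The paper simply writes out explicitly the mutually inverse maps $pF(i')$ and $F(p')i$ witnessing the isomorphism that you invoke via uniqueness of idempotent splittings.
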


\begin{proof}
Let $a\in\mathcal{A}$ satisfy $F(a)=x\oplus y$. Then, we have a biproduct diagram:
\begin{align}\label{diagram_biproduct}
    \xymatrix{
    x\ar@/^/[rr]^i&& F(a)\ar@/^/[ll]^p\ar@/_/[rr]_q && y, \ar@/_/[ll]_j
    }
\end{align}
where $pi=1_x$ and $qj=1_y$. Also, $e=ip$ and $1-e=jq$ are idempotents in $\End_\mathcal{B}(F(a))$.
Now, as $F$ is full and faithful, there is an idempotent $e'$ in $\End_\mathcal{A}(a)$ such that $F(e')=e$, and $F(1-e')=1-e$. Since $\mathcal{A}$ has split idempotents, we get a biproduct diagram:
\begin{align*}
    \xymatrix{
    x'\ar@/^/[rr]^{i'}&& a\ar@/^/[ll]^{p'}\ar@/_/[rr]_{q'} && y', \ar@/_/[ll]_{j'}
    }
\end{align*}
where $p'i'=1_{x'}$, $q'j'=1_{y'}$, $i'p'=e'$ and $j'q'=1-e'$. Applying $F$ to this, we get:
\begin{align*}
    \xymatrix{
    F(x')\ar@/^/[rr]^{F(i')}&& F(a)\ar@/^/[ll]^{F(p')}\ar@/_/[rr]_{F(q')} && F(y'). \ar@/_/[ll]_{F(j')}
    }
\end{align*}
We show this is isomorphic to (\ref{diagram_biproduct}). We have
\begin{align*}
    &ip=e=F(i')F(p'),\, pi=1_x,\, F(p')F(i')=F(p'i')=F(1_{x'})=1_{F(x')}, &&\text{ then}\\
    &F(p')i\circ pF(i')= F(p')F(i')F(p')F(i')=1_{F(x')} && \text{ and} \\
    &pF(i')\circ F(p')i=pipi=1_x.
\end{align*}
Hence $x\cong F(x')$. Similarly, $y\cong F(y')$.
\end{proof}

The following is a well-known result, so we do not present a proof here.
\begin{lemma}\label{lemma_d-kernel}
Suppose we have an exact sequence with terms in $\mathcal{F}$ of the form:
\begin{align}\label{diagram_exact_ker}
    \xymatrix{
0\ar[r] & f^{0}\ar[r]^{\varphi^0}&f^1\ar[r]^{\varphi^1}&\cdots\ar[r]^{\varphi^{d-2}}& f^{d-1}\ar[r]^-{\varphi^{d-1}}&f^d\ar[r]^{\varphi^d}& f^{d+1}.
}
\end{align}
Then
\begin{align*}
    \xymatrix{
0\ar[r] & f^{0}\ar[r]^{\varphi^0}&f^1\ar[r]^{\varphi^1}&\cdots\ar[r]^{\varphi^{d-2}}& f^{d-1}\ar[r]^-{\varphi^{d-1}}&f^d
}
\end{align*}
is a $d$-kernel in $\mathcal{F}$ of $\varphi^d$.
\end{lemma}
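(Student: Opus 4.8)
The statement to prove is Lemma~\ref{lemma_d-kernel}: given an exact sequence with all terms in $\mathcal{F}$
\[
0\to f^0\xrightarrow{\varphi^0} f^1\xrightarrow{\varphi^1}\cdots\xrightarrow{\varphi^{d-1}} f^d\xrightarrow{\varphi^d} f^{d+1}
\]
in $\mmod\Phi$, the truncation $0\to f^0\to\cdots\to f^d$ is a $d$-kernel of $\varphi^d$ in $\mathcal{F}$. By definition of $d$-kernel, I must show that for every $b\in\mathcal{F}$ the sequence
\[
0\to \Hom_\Phi(b,f^0)\to\Hom_\Phi(b,f^1)\to\cdots\to\Hom_\Phi(b,f^d)\to\Hom_\Phi(b,f^{d+1})
\]
is exact. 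The plan is to break the given length-$(d+2)$ exact sequence in $\mmod\Phi$ into short exact sequences by introducing the images $g^i:=\im\varphi^i$, so that we have short exact sequences $0\to g^{i-1}\to f^i\to g^i\to 0$ for $i=1,\dots,d$ (with $g^0=f^0$), plus $0\to g^d\to f^{d+1}$, and then apply $\Hom_\Phi(b,-)$ to each piece, splicing the resulting long exact sequences back together. The only input needed beyond this standard diagram chase is that $\Ext^j_\Phi(b,g^i)=0$ for the relevant $j$ in the range $1,\dots,d-1$, so that the connecting maps vanish and the spliced sequence stays exact; this is exactly where the hypothesis $\mathcal{F}$ $d$-cluster tilting (Definition~\ref{defn_dct}) and $\gldim\Phi\le d$ enter.

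First I would record that, since $\gldim\Phi\le d$, every module has projective dimension at most $d$, hence $\Ext^j_\Phi(-,-)=0$ for $j>d$; this will be needed to handle the top of the long exact sequence coming from $0\to g^d\to f^{d+1}$. Next, for each short exact sequence $0\to g^{i-1}\to f^i\to g^i\to 0$ I apply $\Hom_\Phi(b,-)$, obtaining a long exact sequence involving $\Ext^j_\Phi(b,g^{i-1})$, $\Ext^j_\Phi(b,f^i)$, $\Ext^j_\Phi(b,g^i)$. Since $f^i\in\mathcal{F}$ and $b\in\mathcal{F}$, the $d$-cluster tilting condition gives $\Ext^j_\Phi(b,f^i)=0$ for $j=1,\dots,d-1$, so the long exact sequence yields isomorphisms $\Ext^j_\Phi(b,g^i)\cong\Ext^{j+1}_\Phi(b,g^{i-1})$ for $1\le j\le d-2$ and, at the bottom, the four-term exact piece
\[
0\to\Hom(b,g^{i-1})\to\Hom(b,f^i)\to\Hom(b,g^i)\to\Ext^1(b,g^{i-1})\to 0.
\]
Dimension-shifting along these isomorphisms (starting from $g^0=f^0\in\mathcal{F}$, where $\Ext^{1},\dots,\Ext^{d-1}$ against $b$ vanish) controls $\Ext^1_\Phi(b,g^i)$: one finds $\Ext^1_\Phi(b,g^i)\cong\Ext^{i+1}_\Phi(b,f^0)=0$ as long as $i+1\le d-1$, i.e. for $i\le d-2$. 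Thus for $i=1,\dots,d-1$ the connecting map $\Hom(b,g^i)\to\Ext^1(b,g^{i-1})$ is zero whenever $i-1\le d-2$, i.e. always in this range, so $\Hom_\Phi(b,-)$ is exact on each of these short exact sequences; splicing them gives exactness of
\[
0\to\Hom(b,f^0)\to\Hom(b,f^1)\to\cdots\to\Hom(b,f^{d-1})\to\Hom(b,f^d)\to\Hom(b,g^d)\to 0.
\]
Finally, from $0\to g^d\to f^{d+1}$ the left-exactness of $\Hom_\Phi(b,-)$ gives that $\Hom(b,g^d)\hookrightarrow\Hom(b,f^{d+1})$, and identifying $\Hom(b,g^d)$ with the image of $\Hom(b,f^d)\to\Hom(b,f^{d+1})$ via the factorisation $\varphi^d = (\text{inclusion } g^d\hookrightarrow f^{d+1})\circ(\text{surjection } f^d\twoheadrightarrow g^d)$ shows the full sequence $0\to\Hom(b,f^0)\to\cdots\to\Hom(b,f^d)\to\Hom(b,f^{d+1})$ is exact at every spot, which is what we want.

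The main obstacle is the bookkeeping in the dimension shift: one has to be careful that the vanishing $\Ext^1_\Phi(b,g^i)=0$ is available exactly for the indices $i$ where it is needed (namely to kill the connecting homomorphisms $\Hom(b,g^i)\to\Ext^1(b,g^{i-1})$ for $i=1,\dots,d-1$), and this uses both halves of the cluster-tilting condition on $\mathcal{F}$ together with $\gldim\Phi\le d$ — the latter guaranteeing that the chain of $\Ext$-isomorphisms eventually lands in a vanishing group rather than running off to higher $\Ext$'s indefinitely. Everything else is a routine splicing of long exact sequences, which is presumably why the paper omits the proof; I would either include the dimension-shift computation in a sentence or two or simply cite it as well known.
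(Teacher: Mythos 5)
The paper gives no proof of this lemma (it is dismissed as well known), so there is no argument of the author's to compare against; your splicing-and-dimension-shifting argument is the standard proof and is essentially correct. Breaking the sequence at the images $g^i=\im\varphi^i$, using $\Ext^{1,\dots,d-1}_\Phi(b,f^i)=0$ to shift $\Ext^1_\Phi(b,g^j)\cong\Ext^{j+1}_\Phi(b,f^0)=0$ for $j\le d-2$, and splicing, does give exactness of $0\to\Hom(b,f^0)\to\cdots\to\Hom(b,f^d)\to\Hom(b,f^{d+1})$ at every required spot. Two small corrections. First, your spliced sequence overclaims: surjectivity of $\Hom(b,f^d)\to\Hom(b,g^d)$ would require the connecting map into $\Ext^1_\Phi(b,g^{d-1})$ to vanish, and $\Ext^1_\Phi(b,g^{d-1})$ shifts only as far as a subgroup of $\Ext^{d}_\Phi(b,f^0)$, which the cluster-tilting condition does \emph{not} kill (indeed $\Ext^d$ is exactly where $d$-cluster tilting subcategories carry their interesting extensions). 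This is harmless, because exactness at $\Hom(b,f^d)$ only needs $\ker\Hom(b,\varphi^d)=\Hom(b,g^{d-1})$ (left exactness plus the factorisation of $\varphi^d$) together with surjectivity of $\Hom(b,f^{d-1})\to\Hom(b,g^{d-1})$, which your vanishing for $j\le d-2$ does cover; but you should not assert exactness of $\cdots\to\Hom(b,g^d)\to 0$. Second, $\operatorname{gldim}\Phi\le d$ is never actually used: your chain of isomorphisms terminates in $\Ext^{j+1}_\Phi(b,f^0)$ with $j+1\le d-1$, which vanishes by the cluster-tilting condition alone, and the piece $0\to g^d\to f^{d+1}$ contributes only via left exactness of $\Hom$.
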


\begin{lemma}\label{lemma_d+2angulated}
Let $\mathcal{D}$ and $\mathcal{D}'$ be triangulated categories with suspension functors $\Sigma$ and $\Sigma'$ respectively. Suppose there are $d$-cluster tilting subcategories $\mathcal{C}\subseteq \mathcal{D}$ and $\mathcal{C}'\subseteq \mathcal{D}'$ such that $\Sigma^d(\mathcal{C})\subseteq \mathcal{C}$ and $(\Sigma')^d(\mathcal{C}')\subseteq \mathcal{C}'$. Suppose $F:\mathcal{D}\rightarrow\mathcal{D}'$ is a triangulated functor  such that $F(\mathcal{C})\subseteq \mathcal{C'}$. Then $F$ sends $(d+2)$-angles in $\mathcal{C}$ to $(d+2)$-angles in $\mathcal{C}'$.
\end{lemma}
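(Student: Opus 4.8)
The plan is to recall how $(d+2)$-angles in a $d$-cluster tilting subcategory $\mathcal{C}$ of a triangulated category $\mathcal{D}$ arise concretely, via the Geiss--Keller--Oppermann construction, and then check that this description is preserved by a triangulated functor $F$ sending $\mathcal{C}$ into $\mathcal{C}'$. Recall from \cite{GKO} that a $\Sigma^d$-sequence
\begin{align*}
\xymatrix{
X^0\ar[r]^{\xi^0} & X^1\ar[r]^{\xi^1} & \cdots\ar[r] & X^{d+1}\ar[r]^{\xi^{d+1}} & \Sigma^d X^0
}
\end{align*}
with all terms in $\mathcal{C}$ is a $(d+2)$-angle precisely when it can be obtained by ``splicing'' a complex of $d$ consecutive triangles in $\mathcal{D}$: there exist objects $Y^1,\dots,Y^{d-1}$ in $\mathcal{D}$ and triangles $X^0\to X^1\to Y^1\to\Sigma X^0$, $Y^1\to X^2\to Y^2\to\Sigma Y^1$, \dots, $Y^{d-1}\to X^d\to X^{d+1}\to\Sigma Y^{d-1}$ in $\mathcal{D}$, such that $\xi^{d+1}$ is the composite $X^{d+1}\to\Sigma Y^{d-1}\to\Sigma^2 Y^{d-2}\to\cdots\to\Sigma^d X^0$. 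So first I would state this characterisation precisely (citing \cite[Theorem 1]{GKO} and its proof, or \cite{OT}).

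Next I would take a $(d+2)$-angle $\epsilon$ in $\mathcal{C}$, fix such a presentation by triangles $X^{i}\to X^{i+1}\to Y^i\to \Sigma X^i$ (with the evident re-indexing, $Y^0:=X^1$ etc.) in $\mathcal{D}$, and apply $F$. Since $F$ is a triangulated functor, each $F(X^i)\to F(X^{i+1})\to F(Y^i)\to \Sigma' F(X^i)$ is a triangle in $\mathcal{D}'$ (using a chosen natural isomorphism $F\Sigma\cong\Sigma' F$, and hence $F\Sigma^d\cong(\Sigma')^d F$). Splicing these $d$ triangles in $\mathcal{D}'$ exactly as before produces a $\Sigma'^{d}$-sequence which, by the GKO characterisation applied to $\mathcal{C}'\subseteq\mathcal{D}'$, is a $(d+2)$-angle in $\mathcal{C}'$ — provided all its terms $F(X^0),\dots,F(X^{d+1})$ lie in $\mathcal{C}'$, which holds by the hypothesis $F(\mathcal{C})\subseteq\mathcal{C}'$. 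Finally one checks that this spliced $(d+2)$-angle in $\mathcal{C}'$ is isomorphic, as a $\Sigma'^d$-sequence, to $F(\epsilon)$: the first $d+1$ objects and morphisms agree on the nose, and the connecting morphism of the spliced sequence is the composite $F(X^{d+1})\to\Sigma' F(Y^{d-1})\to\cdots\to(\Sigma')^d F(X^0)$, which equals $F$ applied to the corresponding composite defining $\xi^{d+1}$, up to the coherence isomorphism $F\Sigma^d\cong(\Sigma')^d F$. Hence $F(\epsilon)$ is (isomorphic to) a $(d+2)$-angle in $\mathcal{C}'$, and since the class of $(d+2)$-angles is closed under isomorphism by (N1), $F(\epsilon)$ itself is a $(d+2)$-angle.

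The main obstacle is bookkeeping rather than conceptual: one must be careful about the signs and the coherence isomorphisms $F\Sigma^i\xrightarrow{\sim}(\Sigma')^i F$ when identifying the connecting morphism $\xi^{d+1}$ and its image, since the GKO connecting morphism is built as an $d$-fold composite involving suspensions, and a triangulated functor only commutes with $\Sigma$ up to a specified natural isomorphism. A secondary point worth spelling out is that the intermediate objects $Y^i$ need not lie in $\mathcal{C}$, so one genuinely uses that the \emph{ambient} functor $F$ is triangulated (not merely that its restriction to $\mathcal{C}$ has some property) to transport the triangles; the role of the cluster tilting hypotheses and $\Sigma^d(\mathcal{C})\subseteq\mathcal{C}$, $(\Sigma')^d(\mathcal{C}')\subseteq\mathcal{C}'$ is exactly to guarantee that the endpoints of the splicing lie in $\mathcal{C}$, respectively $\mathcal{C}'$, so that the GKO characterisation applies on both sides.
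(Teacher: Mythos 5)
Your proposal is correct and follows essentially the same route as the paper: both decompose the $(d+2)$-angle into the tower of triangles in $\mathcal{D}$ given by the Geiss--Keller--Oppermann construction, apply the triangulated functor $F$ to obtain the corresponding tower in $\mathcal{D}'$ with all end terms in $\mathcal{C}'$, and conclude via \cite[theorem\ 1]{GKO}. Your extra care about the coherence isomorphisms $F\Sigma\cong\Sigma' F$ is a reasonable refinement that the paper suppresses by treating $F$ as strictly commuting with suspension.
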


\begin{proof}
First note that $(\mathcal{C}, \Sigma^d)$ and $(\mathcal{C}',(\Sigma')^d)$ are $(d+2)$-angulated categories by \cite[theorem\ 1]{GKO}. Take any $(d+2)$-angle in $\mathcal{C}$, say
\begin{align*}
    \xymatrix{
c^0\ar[r]& c^1\ar[r]&\cdots\ar[r]& c^d\ar[r]&c^{d+1}\ar[r]^\gamma&\Sigma^d c^0.
}
\end{align*}
This comes from a diagram in $\mathcal{D}$ of the form
\begin{align}\label{diagram_angle2}
\xymatrix @C=1em{
& c^1\ar[rd]\ar[rr] && c^2\ar[rd]\ar[r]&&\cdots&\ar[r]& c^{d-1}\ar[rr]\ar[rd] &&c^d\ar[rd]
\\
c^0\ar[ru]&& x^1\ar@{~>}[ll]\ar[ru]&& x^2\ar@{~>}[ll]&\cdots& x^{d-2}\ar[ru]&& x^{d-1} \ar@{~>}[ll]\ar[ru]&& c^{d+1},\ar@{~>}[ll]
}
\end{align}
where by $\xymatrix{x\ar@{~>}[r]&y}$, we mean a morphism $x\rightarrow\Sigma y$ and the composition of all the wavy arrows is $\gamma$. Each oriented triangle is a triangle in $\mathcal{D}$ and each non-oriented triangle is commutative. Applying the functor $F$ to (\ref{diagram_angle2}), we get the diagram:
\begin{align*}
\xymatrix @C=0.004em{
& F(c^1)\ar[rd]\ar[rr] && F(c^2)\ar[rd]\ar[r]&&\cdots&\ar[r]& F(c^{d-1})\ar[rr]\ar[rd] &&F(c^d)\ar[rd]
\\
F(c^0)\ar[ru]&& F(x^1)\ar@{~>}[ll]\ar[ru]&& F(x^2)\ar@{~>}[ll]&\cdots& F(x^{d-2})\ar[ru]&& F(x^{d-1}) \ar@{~>}[ll]\ar[ru]&& F(c^{d+1}),\ar@{~>}[ll]
}
\end{align*}
where each non-oriented triangle is commutative, and since $F$ is triangulated, each oriented triangle is a triangle in $\mathcal{D}'$ and by $\xymatrix{F(x)\ar@{~>}[r]&F(y)}$ we mean a morphism $F(x)\rightarrow F(\Sigma y)=\Sigma' F(y)$. Then, by \cite[theorem\ 1]{GKO}, we obtain a $(d+2)$-angle in $\mathcal{C}'$:
\begin{align*}
    \xymatrix{
F(c^0)\ar[r]& F(c^1)\ar[r]&\cdots\ar[r]& F(c^d)\ar[r]&F(c^{d+1})\ar[r]^-{F(\gamma)}&(\Sigma')^d F(c^0).
}
\end{align*}
\end{proof}

\begin{remark}\label{rmk_angle_seq}
Note that any $d$-exact sequence in $\mathcal{F}$ induces a $(d+2)$-angle in $\overline{\mathcal{F}}$. In fact, any $d$-exact sequence in $\mathcal{F}$:
\begin{align*}
    \xymatrix{
0\ar[r]&c^0\ar[r]& c^1\ar[r]&\cdots\ar[r]& c^d\ar[r]&c^{d+1}\ar[r]&0,
}
\end{align*}
can be decomposed into short exact sequences which correspond to triangles in $\mathcal{D}^b(\mmod \Phi)$. Hence, we obtain a diagram of the form (\ref{diagram_angle2}), and so, by \cite[theorem\ 1]{GKO}, a $(d+2)$-angle in $\overline{\mathcal{F}}$ of the form
\begin{align*}
    \xymatrix{
c^0\ar[r]& c^1\ar[r]&\cdots\ar[r]& c^d\ar[r]&c^{d+1}\ar[r]&\Sigma^d c^0.
}
\end{align*}
\end{remark}

Using the above lemmas and \cite[theorem\ A]{HJV}, we prove there is a bijection between functorially finite wide subcategories of $\mathcal{F}$ as defined in \cite[definition\ 2.11]{HJV}, and functorially finite wide subcategories of $\overline{\mathcal{F}}$ as defined below.

\begin{defn}[{\cite[section\ 1]{HJV}}]\label{defn_d-hom}
Let $\Gamma$ be a finite dimensional $k$-algebra and $\mathcal{G}\subseteq \mmod \Gamma$ be a $d$-cluster tilting subcategory. We say that $(\Gamma, \mathcal{G})$ is a \textit{$d$-homological pair}.

If $\phi:\Phi\rightarrow \Gamma$ is a homomorphism of algebras, then we denote by $\phi_*:\mmod\Gamma\rightarrow \mmod \Phi$ the functor given by restriction of scalars from $\Gamma$ to $\Phi$. Moreover, if $\phi$ is an epimorphism of algebras such that $\phi_*(\mathcal{G})\subseteq \mathcal{F}$ and Tor$^{\Phi}_d (\Gamma,\Gamma)=0$, then we say that $\phi:(\Phi,\mathcal{F})\rightarrow (\Gamma,\mathcal{G})$ is a \textit{$d$-pseudoflat epimorphism of $d$-homological pairs}.
\end{defn}

\begin{remark}\label{rmk_thmA}
In the situation of Definition \ref{defn_d-hom}, we also denote by $\phi_*$ the induced functor on the level of bounded derived categories:
\begin{align*}
    \phi_*:\mathcal{D}^b(\mmod \Gamma)\rightarrow \mathcal{D}^b(\mmod \Phi).
\end{align*}
This is full, faithful and triangulated, since $\phi$ is a homological epimorphism by \cite[proposition\ 5.8]{HJV}. Note that, since $\phi_*$ is triangulated, it commutes with $\Sigma$. Moreover, by Lemma \ref{lemma_dsums}, $\phi_*(\overline{\mathcal{G}})$ is closed under direct summands. Hence
\begin{align*}
\phi_* (\overline{\mathcal{G}})=\phi_* (\add(\Sigma^{\mathbb{Z}d}\mathcal{G}))=\add(\Sigma^{\mathbb{Z}d}(\phi_*(\mathcal{G})))=\overline{\phi_*(\mathcal{G})}.
\end{align*}
\end{remark}

\begin{proof}[Proof of Theorem A]
We start by showing that if $\mathcal{W}\subseteq \mathcal{F}$ is a functorially finite wide subcategory of $\mathcal{F}$, then $\overline{\mathcal{W}}$ is a functorially finite wide subcategory of $\overline{\mathcal{F}}$. By \cite[theorem\ A]{HJV}, there is a $d$-pseudoflat epimorphism of $d$-homological pairs $\phi:(\Phi, \mathcal{F})\rightarrow (\Gamma,\mathcal{G})$ such that $\phi_* (\mathcal{G})=\mathcal{W}$.
Then, by Remark \ref{rmk_thmA}, we have
\begin{align*}
\phi_* (\overline{\mathcal{G}})=\overline{\phi_*(\mathcal{G})}=\overline{\mathcal{W}}\subseteq \overline{\mathcal{F}}.
\end{align*}
Note that $\phi_* (\overline{\mathcal{G}})$ is functorially finite in $\overline{\mathcal{F}}$ by Lemma \ref{lemma_func_fin}.
Then, to complete the first part of the proof, it remains to show that $\phi_* (\overline{\mathcal{G}})$ is closed under $d$-extensions. Take any morphism $\delta:\phi_*(\overline{g})\rightarrow\phi_*(\overline{g'})$ in $\phi_*(\overline{\mathcal{G}})$. Since $\phi_*$ is full and faithful, then $\delta=\phi_*(\overline{g}\xrightarrow{\gamma}\overline{g'})$, for some morphism $\gamma$ in $\overline{\mathcal{G}}$. As $\overline{\mathcal{G}}$ is $(d+2)$-angulated, we can extend $\gamma$ to a $(d+2)$-angle in $\overline{\mathcal{G}}$ of the form:
\begin{align*}
\xymatrix{
\Sigma^{-d}(\overline{g'})\ar[r]& \overline{g^1}\ar[r]&\cdots\ar[r]& \overline{g^d}\ar[r]&\overline{g}\ar[r]^\gamma&\overline{g'}.
}
\end{align*}
Then, by Lemma \ref{lemma_d+2angulated}, we obtain a $(d+2)$-angle in $\overline{\mathcal{F}}$ with objects from $\overline{\mathcal{W}}$:
\begin{align*}
\xymatrix{
\Sigma^{-d}\phi_*(\overline{g'})\ar[r]& \phi_*(\overline{g^1})\ar[r]&\cdots\ar[r]& \phi_*(\overline{g^d})\ar[r]&\phi_*(\overline{g})\ar[r]^{\delta}&\phi_*(\overline{g'}).
}
\end{align*}
Hence $\phi_*(\overline{\mathcal{G}})$ is closed under $d$-extensions.

Now let $\mathcal{X}\subseteq\overline{\mathcal{F}}$ be a functorially finite wide subcategory. Then, by Lemmas \ref{lemma_bij_gen} and \ref{lemma_func_fin}, we have that $\mathcal{X}=\overline{\mathcal{V}}$ for some functorially finite subcategory $\mathcal{V}\subseteq\mathcal{F}$. It remains to show that $\mathcal{V}\subseteq\mathcal{F}$ is wide, in the sense of \cite[definition\ 2.11]{HJV}. Let $\nu: v\rightarrow v'$ be a morphism in $\mathcal{V}$. Since $\mathcal{X}\subseteq\overline{\mathcal{F}}$ is wide, there is a $(d+2)$-angle in $\overline{\mathcal{F}}$ with objects from $\mathcal{X}$ of the form:
\begin{align}\label{eqn_wide}
\xymatrix{
\Sigma^{-d} v'\ar[r] &x^1\ar[r]^{\xi^1}&x^2\ar[r]^{\xi^2}&\cdots\ar[r]^{\xi^{d-1}}& x^d\ar[r]^{\xi^d}&v\ar[r]^{\nu}&v'.
}
\end{align}
Note that $v,\, v'$ are chain complexes concentrated in degree zero since they are in $\mathcal{V}$. Also, as $\mathcal{X}=\overline{\mathcal{V}}$, any $x\in\mathcal{X}$ is isomorphic to a complex with zero differentials and so $H(x)\cong x$. For $i=1,\dots,\,d$, let $v^i$ and $\nu^i$ be the components at degree zero of $x^i$ and $\xi^i$ respectively, and note that $v^i\in\mathcal{V}$.

Note that $H^{0}(-)=\Hom_{\mathcal{D}^b}(\Phi,-)$. Since $\Phi$ is a projective module in $\mmod\Phi$, then $\Phi\in\overline{\mathcal{F}}$. Then, applying  $H^{0}(-)=\Hom_{\overline{\mathcal{F}}}(\Phi,-)$ to (\ref{eqn_wide}), by \cite[proposition\ 2.5]{GKO} we obtain the exact sequence:
\begin{align*}
\xymatrix{
0\ar[r] &v^1\ar[r]^{\nu^1}&v^2\ar[r]^{\nu^2}&\cdots\ar[r]^{\nu^{d-1}}& v^d\ar[r]^{\nu^d}&v\ar[r]^{\nu}&v',
}
\end{align*}
where we have used the fact that $H^0(\Sigma^{-d} v')=0$, since $v'$ is concentrated in degree zero. Then, by Lemma \ref{lemma_d-kernel}, we conclude that
\begin{align*}
\xymatrix{
0\ar[r] &v^1\ar[r]^{\nu^1}&v^2\ar[r]^{\nu^2}&\cdots\ar[r]^{\nu^{d-1}}& v^d\ar[r]^{\nu^d}&v
}
\end{align*}
is a $d$-kernel of $\nu$ in $\mathcal{F}$ with objects from $\mathcal{V}$.

The existence of a $d$-cokernel of $\nu$ in $\mathcal{F}$ with objects from $\mathcal{V}$ follows by a dual argument.

Consider a $d$-exact sequence in $\mathcal{F}$ of the form:
\begin{align*}
    \xymatrix{
0\ar[r] & v'\ar[r]^{\varphi^0}&f^1\ar[r]^{\varphi^1}&\cdots\ar[r]^{\varphi^{d-2}}& f^{d-1}\ar[r]^-{\varphi^{d-1}}&f^d\ar[r]^{\varphi^d}& v\ar[r]&0,
}
\end{align*}
with $v$ and $v'$ in $\mathcal{V}$. Then, by Remark \ref{rmk_angle_seq}, there is a $(d+2)$-angle in $\overline{\mathcal{F}}$ of the form:
\begin{align*}
    \xymatrix{
v'\ar[r]^{\varphi^0}&f^1\ar[r]^{\varphi^1}&f^2\ar[r]^{\varphi^2}&\cdots\ar[r]^{\varphi^{d-2}}& f^{d-1}\ar[r]^-{\varphi^{d-1}}&f^d\ar[r]^{\varphi^d}& v\ar[r]^-{\alpha}&\Sigma^d v'.
}
\end{align*}
Since $\mathcal{X}$ is closed under $d$-extensions and $v,\,v'\in\mathcal{X}$, there is a $(d+2)$-angle in $\overline{\mathcal{F}}$ with objects from $\mathcal{X}$:
\begin{align}\label{diagram_d-exseq}
\xymatrix{
v'\ar[r]^{\xi^0} &x^1\ar[r]^{\xi^1}&x^2\ar[r]^{\xi^2}&\cdots\ar[r]^{\xi^{d-2}}&x^{d-1}\ar[r]^{\xi^{d-1}}& x^d\ar[r]^{\xi^d}&v\ar[r]^-{\alpha}&\Sigma^d v'.
}
\end{align}
For $i=0,\dots,\,d$, let $v^i$ and $\nu^i$ be the components at degree zero of $x^i$ and $\xi^i$ respectively, and note that $v^i\in\mathcal{V}$.
Applying $H^{0}(-)=\Hom_{\overline{\mathcal{F}}}(\Phi,-)$ to (\ref{diagram_d-exseq}), we obtain the exact sequence:
\begin{align*}
\xymatrix{
0\ar[r]& v'\ar[r]^{\nu^0} &v^1\ar[r]^{\nu^1}&v^2\ar[r]^{\nu^2}&\cdots\ar[r]^{\nu^{d-1}}& v^d\ar[r]^{\nu^d}&v\ar[r]&0.
}
\end{align*}
By Lemma \ref{lemma_d-kernel} and its dual, this is a $d$-exact sequence. Moreover, by axiom (N3) from Definition \ref{defn_angles}, we have the morphism of $(d+2)$-angles in $\overline{\mathcal{F}}$:
\begin{align*}
    \xymatrix {
v'\ar[r]^{\varphi^0}\ar@{=}[d] & f^1\ar[r]^{\varphi^1}\ar@{-->}[d]^{\zeta^1}& f^2\ar[r]^{\varphi^2}\ar@{-->}[d]^{\zeta^2} &\cdots\ar[r]^{\varphi^{d-1}} & f^{d}\ar[r]^{\varphi^d}\ar@{-->}[d]^{\zeta^d} &v\ar[r]^-{\alpha}\ar@{=}[d] &\Sigma^d v'\ar@{=}[d]\\
v'\ar[r]_{\xi^0} &x^1\ar[r]_{\xi^1}&x^2\ar[r]_{\xi^2}&\cdots\ar[r]_{\xi^{d-1}}& x^d\ar[r]_{\xi^d}&v\ar[r]_-{\alpha}&\Sigma^d v'.
}
\end{align*}
Applying $H^0(-)$ to the above, we obtain the commutative diagram:
\begin{align*}
    \xymatrix {
0\ar[r]&v'\ar[r]^{\varphi^0}\ar@{=}[d] & f^1\ar[r]^{\varphi^1}\ar[d]^{\psi^1}& f^2\ar[r]^{\varphi^2}\ar[d]^{\psi^2} &\cdots\ar[r]^{\varphi^{d-1}} & f^{d}\ar[r]^{\varphi^d}\ar[d]^{\psi^d} &v\ar[r]\ar@{=}[d] &0\\
0\ar[r]& v'\ar[r]_{\nu^0} &v^1\ar[r]_{\nu^1}&v^2\ar[r]_{\nu^2}&\cdots\ar[r]_{\nu^{d-1}}& v^d\ar[r]_{\nu^d}&v\ar[r]&0.
}
\end{align*}
Hence, the first and second row in the above diagram are two Yoneda equivalent $d$-exact sequences, and the second row has objects in $\mathcal{V}$.
\end{proof}

\section{Auslander-Reiten $(d+2)$-angles}\label{section_AR}
Let us go back to Setup \ref{setup}.
In this section we introduce and study Auslander-Reiten $(d+2)$-angles both in $\mathcal{M}$ and in its subcategory $\mathcal{W}$.

The definition of Auslander-Reiten $(d+2)$-angles was first introduced by Iyama and Yoshino in \cite[definition\ 3.8]{IY}. Here we present a modified definition since we force the end terms of any Auslander-Reiten $(d+2)$-angle to have local endomorphism rings, as pointed out in Remark \ref{remark_AR}. 

\begin{defn}
A $(d+2)$-angle in $\mathcal{M}$ of the form
\begin{align*}
\xymatrix {
\epsilon:& X^0\ar[r]^{\xi^0} & X^1\ar[r]^{\xi^1}& X^2\ar[r] &\cdots\ar[r] & X^d\ar[r]^{\xi^d} & X^{d+1}\ar[r]^{\xi^{d+1}} &\Sigma^d X^0
}
\end{align*}
is an \textit{Auslander-Reiten $(d+2)$-angle} if $\xi^0$ is left almost split, $\xi^d$ is right almost split and, when $d\geq2$, also $\xi^1,\dots,\, \xi^{d-1}\in\rad_\mathcal{M}$.
\end{defn}

\begin{remark}\label{remark_AR}
Suppose $\epsilon$ as in the above definition is an Auslander-Reiten $(d+2)$-angle. Since $\xi^0$ is left almost split, \cite[lemma\ 2.3]{KH} implies that $\End(X^0)$ is local and hence $X^0$ is indecomposable. Similarly, since $\xi^d$ is right almost split, then $\End(X^{d+1})$ is local and hence $X^{d+1}$ is indecomposable.
Also, as proved in the more general Lemma \ref{lemma_indec_subc}, we have that $\xi^0$ and $\xi^d$ are in $\rad_{\mathcal{M}}$.

Moreover, when $d=1$, we have $\xi^0$ and $\xi^d$ in $\rad_{\mathcal{M}}$, so that $\xi^d$ is right minimal and $\xi^0$ is left minimal.
When $d\geq 2$, since $\xi^{d-1}\in \rad_\mathcal{M}$, by Lemma \ref{lemma_lrmin} we have that $\xi^d$ is right minimal and similarly $\xi^0$ is left minimal.
\end{remark}

We now give equivalent definitions for Auslander-Reiten $(d+2)$-angles.

\begin{lemma}\label{lemma_lras}
Let
\begin{align*}
\xymatrix {
\epsilon:& X^0\ar[r]^{\xi^0} & X^1\ar[r]^{\xi^1}& X^2\ar[r] &\cdots\ar[r] & X^d\ar[r]^{\xi^d} & X^{d+1}\ar[r]^{\xi^{d+1}} &\Sigma^d X^0
}
\end{align*}
be a $(d+2)$-angle. Then the following are equivalent:
\begin{enumerate}[label=(\alph*)]
\item $\epsilon$ is an Auslander-Reiten $(d+2)$-angle,
\item $\xi^0,\,\xi^1,\dots,\, \xi^{d-1}$ are in $\rad_\mathcal{M}$ and $\xi^d$ is right almost split,
\item $\xi^1,\dots,\, \xi^{d-1},\,\xi^d$ are in $\rad_\mathcal{M}$ and $\xi^0$ is left almost split.
\end{enumerate}
\end{lemma}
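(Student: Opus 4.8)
The plan is to prove the cyclic chain of implications (a) $\Rightarrow$ (b) $\Rightarrow$ (c) $\Rightarrow$ (a), which by symmetry (rotation of the $(d+2)$-angle) is really just one argument applied twice plus the symmetric statement. The key input is Remark \ref{remark_AR}: if $\epsilon$ is an Auslander-Reiten $(d+2)$-angle, then $\xi^0$ and $\xi^d$ lie in $\rad_\mathcal{M}$ (this is asserted there, citing the more general Lemma \ref{lemma_indec_subc}), and when $d\geq 2$ the intermediate maps $\xi^1,\dots,\xi^{d-1}$ are in $\rad_\mathcal{M}$ by definition. So (a) $\Rightarrow$ (b) is immediate: all of $\xi^0,\dots,\xi^{d-1}$ are radical maps and $\xi^d$ is right almost split by hypothesis. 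Dually, (a) $\Rightarrow$ (c) is immediate. Thus the real content is the reverse directions, and it suffices to show, say, (b) $\Rightarrow$ (a); then (c) $\Rightarrow$ (a) follows by applying (b) $\Rightarrow$ (a) to the left rotation (using axiom (N2) and the fact that left rotations preserve the radical condition up to sign).

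For (b) $\Rightarrow$ (a): assume $\xi^0,\dots,\xi^{d-1}\in\rad_\mathcal{M}$ and $\xi^d$ is right almost split. We already have $\xi^1,\dots,\xi^{d-1}\in\rad_\mathcal{M}$ (if $d\geq 2$) and $\xi^d$ right almost split, so to conclude $\epsilon$ is an Auslander-Reiten $(d+2)$-angle it remains to show $\xi^0$ is left almost split. First, $\xi^0$ is not a split monomorphism: by Lemma \ref{lemma_epimono}, $\xi^0$ being a split monomorphism would force $\xi^{d+1}=0$ and $\xi^d$ a split epimorphism, contradicting that $\xi^d$ is right almost split. Now take any $Y\in\mathcal{M}$ and a morphism $\gamma:X^0\to Y$ that is not a split monomorphism; I must factor $\gamma$ through $\xi^0$. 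By (N1) extend $\gamma$ to a $(d+2)$-angle; the comparison axiom (N3) applied to a morphism of $(d+2)$-angles built from $1_{X^0}$ relating $\epsilon$ to this new angle would give the factorization, \emph{provided} one knows the relevant composite vanishes. The cleaner route is: since $\xi^d$ is right almost split, $X^{d+1}$ has local endomorphism ring hence is indecomposable, and right almost split morphisms into a fixed indecomposable are unique up to the obvious equivalence; combined with Lemma \ref{lemma_lrmin}(a) (here $\xi^0\in\rad_\mathcal{M}$ gives $\xi^1$ right minimal) the $(d+2)$-angle $\epsilon$ is the minimal one. Then I invoke the standard fact that in a $(d+2)$-angle whose last map $\xi^d$ is right almost split and whose maps $\xi^1,\dots,\xi^{d-1}$ are radical, the first map $\xi^0$ is automatically left almost split — this is the $(d+2)$-angulated analogue of the classical statement for almost split triangles, and the proof uses Lemma \ref{lemma_zeroiffexists1} to convert "$\gamma$ does not factor through $\xi^0$" into "$\xi^1\circ(\text{something})\neq 0$" and then runs a minimality/indecomposability argument on $X^{d+1}$.

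Concretely, here is the argument I would write out. Suppose $\gamma:X^0\to Y$ does not factor through $\xi^0$. Extend $\gamma$ to a $(d+2)$-angle $\zeta: X^0\xrightarrow{\gamma} Y\to Z^2\to\cdots\to Z^{d+1}\to\Sigma^d X^0$ with the intermediate maps in $\rad_\mathcal{M}$ (Lemma \ref{lemma_uni_min} when $d>1$; trivial when $d=1$). By (N3) applied to the identity on $X^0$ there is a morphism of $(d+2)$-angles $\epsilon\to\zeta$ with first component $1_{X^0}$; its last component $\psi:\Sigma^d X^0\to\Sigma^d X^0$ is then $\Sigma^d$ of an endomorphism of $X^0$, and if $\gamma$ does not factor through $\xi^0$ one shows via Lemma \ref{lemma_zeroiffexists1} (applied suitably after rotation) that the induced map $X^{d+1}\to Z^{d+1}$ composed appropriately is nonzero; since $X^{d+1}$ is indecomposable and $\xi^d$ is right minimal (Remark \ref{remark_AR}), this map is a split monomorphism, which by a diagram chase forces $\gamma$ to be a split monomorphism, a contradiction. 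Hence $\xi^0$ is left almost split and $\epsilon$ is an Auslander-Reiten $(d+2)$-angle.

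The main obstacle I anticipate is the last paragraph: carefully manufacturing the morphism of $(d+2)$-angles and tracking which composite must be shown nonzero, since unlike the triangulated case the intermediate terms $X^2,\dots,X^d$ do not carry the information and one must genuinely use the minimality hypothesis $\xi^1,\dots,\xi^{d-1}\in\rad_\mathcal{M}$ together with Lemma \ref{lemma_uni_min} to pin down the angle. Everything else — the non-split-mono claim, the reduction of (c) to (b) by rotation, the immediate implications from (a) — is routine.
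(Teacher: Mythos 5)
Your reduction of the problem is fine: (a)~$\Rightarrow$~(b),(c) is immediate from Remark \ref{remark_AR} and the definition, and the whole content is (b)~$\Rightarrow$ ``$\xi^0$ is left almost split'' (plus its dual). But your argument for that step has a genuine gap, and at the crucial moment it is circular. Given $\gamma:X^0\to Y$ not a split monomorphism, you extend $\gamma$ to a $(d+2)$-angle $\zeta$ beginning $X^0\xrightarrow{\gamma}Y\to\cdots$ and then ``by (N3) applied to the identity on $X^0$'' produce a morphism of $(d+2)$-angles $\epsilon\to\zeta$ with first component $1_{X^0}$. Axiom (N3) requires as \emph{input} a commutative square over the first two objects, i.e.\ a morphism $\varphi^1:X^1\to Y$ with $\varphi^1\circ\xi^0=\gamma$ --- which is exactly the factorization you are trying to construct. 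The same circularity appears in your ``cleaner route,'' where you invoke as a ``standard fact'' that a $(d+2)$-angle with $\xi^d$ right almost split and $\xi^1,\dots,\xi^{d-1}$ radical has $\xi^0$ left almost split: that \emph{is} the statement being proved. (You also appeal to Remark \ref{remark_AR} for right minimality of $\xi^d$, which again presupposes (a); the legitimate source is Lemma \ref{lemma_lrmin}.)

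The paper avoids this by attaching the test morphism at the \emph{other} end of the angle: it proves Lemma \ref{lemma_lras} as the case $\mathcal{W}=\mathcal{M}$ of Lemma \ref{lemma_lras_gen}, whose engine is Lemma \ref{lemma_lm_lasW}. There one extends $\Sigma^d(\phi^0)\circ\xi^{d+1}:X^{d+1}\to\Sigma^d Y$ (not $\phi^0$ itself) to a $(d+2)$-angle $Y\to V^1\to\cdots\to V^d\to X^{d+1}\to\Sigma^d Y$; the (N3) square over the last two objects, with vertical maps $1_{X^{d+1}}$ and $\Sigma^d\phi^0$, commutes by construction. One then shows the first map $Y\to V^1$ is a split monomorphism: otherwise $V^d\to X^{d+1}$ is not a split epimorphism, factors through the right almost split $\xi^d$, and a second application of (N3) produces $\psi^0$ with $\Sigma^d(\psi^0\phi^0)\circ\xi^{d+1}=\xi^{d+1}$; left minimality of $\xi^{d+1}$ --- which is where the hypothesis $\xi^0\in\rad_{\mathcal{M}}$ enters, via Lemma \ref{lemma_lrmin} applied to the left rotation --- forces $\psi^0\phi^0$ to be an isomorphism, contradicting that $\phi^0$ is not a split monomorphism. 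Splitting $Y\to V^1$ then yields the desired factorization of $\phi^0$ through $\xi^0$. This is the idea your writeup is missing. One further small point: (c)~$\Rightarrow$~(b) does not follow by applying (b)~$\Rightarrow$~(a) to the left rotation (the rotated angle would need $\xi^{d+1}$ to be right almost split, which (c) does not give); it follows by the dual argument in the opposite category.
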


Instead of proving Lemma \ref{lemma_lras} now, we will later prove the more general Lemma \ref{lemma_lras_gen}. The case $\mathcal{W}=\mathcal{M}$ in the latter will then give us Lemma \ref{lemma_lras}.

The following lemma is the generalisation of \cite[lemma\ 2.6]{KH} to $(d+2)$-angles.

\begin{lemma}\label{lemma_AReq}
Consider a $(d+2)$-angle of the form
\begin{align*}
\xymatrix {
\epsilon:& X^0\ar[r]^{\xi^0} & X^1\ar[r]^{\xi^1}& X^2\ar[r] &\cdots\ar[r] & X^d\ar[r]^{\xi^d} & X^{d+1}\ar[r]^{\xi^{d+1}} &\Sigma^d X^0,
}
\end{align*}
and suppose that $\xi^d$ is right almost split and, if $d\geq 2$, also that $\xi^1,\dots,\,\xi^{d-1}$ are in $\rad_\mathcal{M}$. Then the following are equivalent:
\begin{enumerate}[label=(\alph*)]
\item $\End(X^0)$ is local,
\item $\xi^{d+1}$ is left minimal,
\item $\xi^0$ is in $\rad_{\mathcal{M}}$,
\item $\epsilon$ is an Auslander-Reiten $(d+2)$-angle.
\end{enumerate}
\end{lemma}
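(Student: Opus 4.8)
The plan is to show the chain of implications $(d)\Rightarrow(a)\Rightarrow(b)\Rightarrow(c)\Rightarrow(d)$, using the standing hypothesis that $\xi^d$ is right almost split and (when $d\geq 2$) that $\xi^1,\dots,\xi^{d-1}\in\rad_\mathcal{M}$. The implication $(d)\Rightarrow(a)$ is immediate: if $\epsilon$ is an Auslander–Reiten $(d+2)$-angle then $\xi^0$ is left almost split, so by \cite[lemma\ 2.3]{KH} (quoted in Remark \ref{remark_AR}) $\End(X^0)$ is local. For $(a)\Rightarrow(b)$, suppose $\End(X^0)$ is local and let $\nu:\Sigma^d X^0\to\Sigma^d X^0$ satisfy $\nu\circ\xi^{d+1}=\xi^{d+1}$; I want $\nu$ an isomorphism. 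Write $\nu=\Sigma^d\mu$ for some $\mu:X^0\to X^0$ (using that $\Sigma^d$ is an automorphism). Since $\End(X^0)$ is local, either $\mu$ is an isomorphism (and we are done) or $\mu\in\rad_\mathcal{M}$, i.e.\ $1_{X^0}-\mu$ is an isomorphism. In the latter case $(\Sigma^d(1_{X^0}-\mu))\circ\xi^{d+1}=\xi^{d+1}-\nu\circ\xi^{d+1}\cdot(\text{sign})$; more cleanly, $\nu\circ\xi^{d+1}=\xi^{d+1}$ gives $\Sigma^d\mu\circ\xi^{d+1}=\xi^{d+1}$, so $\Sigma^d(1-\mu)\circ\xi^{d+1}=0$, and since $\Sigma^d(1-\mu)$ is an isomorphism this forces $\xi^{d+1}=0$; then by Lemma \ref{lemma_epimono} $\xi^d$ is a split epimorphism, contradicting that $\xi^d$ is right almost split. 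Hence $\mu$, and therefore $\nu$, is an isomorphism, proving $\xi^{d+1}$ is left minimal.

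For $(b)\Rightarrow(c)$: this is essentially the content of Lemma \ref{lemma_lrmin}(b), which states that $\xi^d$ is left minimal if and only if $\xi^{d+1}\in\rad_\mathcal{M}$. The claim $(c)$ is about $\xi^0\in\rad_\mathcal{M}$, so I need to connect $\xi^{d+1}$ being left minimal with $\xi^0\in\rad_\mathcal{M}$. Applying the left rotation axiom (N2), the left-rotated $(d+2)$-angle has $\xi^{d+1}$ in the position where the original had $\xi^d$, and $(-1)^d\Sigma^d\xi^0$ in the position of the original $\xi^{d+1}$; then Lemma \ref{lemma_lrmin}(b) applied to the rotated angle says $\xi^{d+1}$ is left minimal iff $(-1)^d\Sigma^d\xi^0\in\rad_\mathcal{M}$, which holds iff $\xi^0\in\rad_\mathcal{M}$ since $\Sigma^d$ is an automorphism (hence preserves the radical) and signs are invertible. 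So $(b)\Leftrightarrow(c)$. Finally $(c)\Rightarrow(d)$: given $\xi^0\in\rad_\mathcal{M}$ together with the standing hypotheses ($\xi^d$ right almost split, $\xi^1,\dots,\xi^{d-1}\in\rad_\mathcal{M}$), we are exactly in case $(b)$ of Lemma \ref{lemma_lras}, which gives that $\epsilon$ is an Auslander–Reiten $(d+2)$-angle, i.e.\ $(d)$.

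The main obstacle I anticipate is the careful bookkeeping in $(a)\Rightarrow(b)$ and $(b)\Leftrightarrow(c)$: one must invoke that $\Sigma^d$ is an automorphism to both lift $\nu$ to an endomorphism of $X^0$ and to transport radical-membership across $\Sigma^{\pm d}$, and one must correctly track which morphism lands in which slot after left rotation in (N2), including the sign $(-1)^d$ (harmless for radical membership but worth stating). A subtle point is that in the case $d=1$ the hypothesis ``$\xi^1,\dots,\xi^{d-1}\in\rad_\mathcal{M}$'' is vacuous, so the argument must not secretly use it for $d=1$; in fact it does not — only Lemma \ref{lemma_lrmin} and Lemma \ref{lemma_epimono} are used, both valid for all $d\geq 1$. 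Since Lemma \ref{lemma_lras} is invoked in $(c)\Rightarrow(d)$ and will itself be proved later as a special case of Lemma \ref{lemma_lras_gen}, I should make sure there is no circularity; checking the dependency order, Lemma \ref{lemma_lras_gen} is stated to be proved independently, so this is fine, but if one prefers a self-contained argument one can instead observe directly that $(c)$ plus the standing hypotheses already say $\xi^0$ is left minimal (Lemma \ref{lemma_lrmin}(a), using $\xi^1\in\rad_\mathcal{M}$ when $d\geq2$, or $\xi^0\in\rad_\mathcal{M}$ directly when $d=1$) and then show $\xi^0$ is left almost split using that $\xi^d$ is right almost split via a diagram chase with Lemma \ref{lemma_zeroiffexists1}.
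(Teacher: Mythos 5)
Your proposal is correct and follows essentially the same route as the paper, which proves this statement as the special case $\mathcal{W}=\mathcal{M}$ of Lemma \ref{lemma_AReq_W}: the same ingredients (Lemma \ref{lemma_epimono}, Lemma \ref{lemma_lrmin} applied to the left rotation, Lemma \ref{lemma_lras}, and the locality lemmas of \cite{KH}) appear in both. The only differences are cosmetic: you organise the implications as a single cycle $(d)\Rightarrow(a)\Rightarrow(b)\Rightarrow(c)\Rightarrow(d)$ where the paper proves $(a)\Rightarrow(b)\Rightarrow(d)\Rightarrow(a)$ plus $(b)\Rightarrow(c)$ and $(c)\Rightarrow(d)$, and you inline the short argument that a non-zero map into an object with local endomorphism ring is left minimal instead of citing \cite[lemma\ 2.4]{KH}.
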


 As done for Lemma \ref{lemma_lras}, instead of proving the above now, we will later prove the more general Lemma \ref{lemma_AReq_W}. The case $\mathcal{W}=\mathcal{M}$ in the latter will then give us Lemma \ref{lemma_AReq}.

We now define Auslander-Reiten $(d+2)$-angles in the additive subcategory $\mathcal{W}$ of $\mathcal{M}$ closed under  $d$-extensions. To do so, we first define left and right almost split morphisms in $\mathcal{W}$.

\begin{lemma}\label{lemma_indec_subc}
\begin{enumerate}[label=(\alph*)]
\item Let $\omega^0: W^0\rightarrow W^1$ be left almost split in $\mathcal{W}$, then $\End (W^0)$ is local and $\omega^0\in\rad_\mathcal{W}$.
\item Let $\omega^d:W^d\rightarrow W^{d+1}$ be right almost split in $\mathcal{W}$, then $\End(W^{d+1})$ is local and $\omega^d\in\rad_\mathcal{W}$.
\end{enumerate}
\end{lemma}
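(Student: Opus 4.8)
\textbf{Proof plan for Lemma \ref{lemma_indec_subc}.}

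The plan is to prove part (a); part (b) will follow by a dual argument in the dual category. The statement is the $\mathcal{W}$-relative analogue of \cite[lemma\ 2.3]{KH} (for the locality of $\End(W^0)$) together with the observation that a left almost split morphism cannot be split, hence lies in the radical of the local-at-source category $\mathcal{W}$. So the natural strategy is to mimic the classical argument: show that the non-isomorphisms in $\End_{\mathcal{W}}(W^0)$ form a (two-sided) ideal, which forces $\End_{\mathcal{W}}(W^0)$ to be local; then deduce $\omega^0\in\rad_{\mathcal{W}}$ from the definition of left almost split together with indecomposability of $W^0$.

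First I would record that $W^0\neq 0$: if $W^0=0$ then the zero map $W^0\to W^1$ would be a split monomorphism, contradicting the definition of left almost split. Next, the key step: let $f\in\End_{\mathcal{W}}(W^0)$ be a non-isomorphism. Since $\mathcal{M}$ is Krull--Schmidt (Setup \ref{setup}) and $W^0\in\mathcal{W}$, in order to show $\End_{\mathcal{W}}(W^0)$ is local it suffices to show that $W^0$ is indecomposable, equivalently that every non-isomorphism $f\in\End_{\mathcal{W}}(W^0)$ lies in a proper ideal; concretely I will show $1_{W^0}-f$ is invertible whenever $f$ is not, or directly that the non-units form an ideal. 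The mechanism is: if $f:W^0\to W^0$ is not a split monomorphism then it factors through $\omega^0$, say $f=g\circ\omega^0$ for some $g:W^1\to W^0$ in $\mathcal{W}$. Since $\omega^0$ is itself not a split monomorphism (it is left almost split), one iterates: for non-isomorphisms $f_1,f_2$, using that $\omega^0$ is not split one shows $f_1+f_2$ and $hf_1$, $f_1h$ remain non-isomorphisms (a non-split monomorphism composed with or added to appropriate maps stays non-split; here one uses that in a Krull--Schmidt category an endomorphism is an isomorphism iff it is a split monomorphism iff it is a split epimorphism, and that the radical ideal is well-defined). This is exactly the bookkeeping in \cite[lemma\ 2.3]{KH}, carried out inside $\mathcal{W}$ rather than $\mathcal{M}$; the point worth checking is that all the factoring maps it produces can be taken in $\mathcal{W}$, which holds because $\mathcal{W}$ is a full additive subcategory and $W^1\in\mathcal{W}$. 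Once $\End_{\mathcal{W}}(W^0)$ is local, $W^0$ is indecomposable.

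Finally, for $\omega^0\in\rad_{\mathcal{W}}$: the radical $\rad_{\mathcal{W}}(W^0,W^1)$ consists of those $\varphi:W^0\to W^1$ such that for all $\psi:W^1\to W^0$ the endomorphism $1_{W^0}-\psi\varphi$ is invertible (equivalently, since $W^0$ is now known indecomposable, such that $\varphi$ is not a split monomorphism — note $W^1$ need not be indecomposable, so I would use the split-monomorphism characterisation). But $\omega^0$ is left almost split, hence by definition not a split monomorphism, so $\omega^0\in\rad_{\mathcal{W}}(W^0,W^1)$. I expect the main obstacle to be the locality argument: making sure the ideal-of-non-units argument goes through \emph{relatively} in $\mathcal{W}$ with only $\mathcal{W}$-morphisms available, and correctly invoking Krull--Schmidt for $\mathcal{M}$ to transfer it to $\mathcal{W}$ (which is immediate since $\mathcal{W}$ is full and closed under summands, so $\End_{\mathcal{W}}(W^0)=\End_{\mathcal{M}}(W^0)$ and $\mathcal{W}$ inherits Krull--Schmidt). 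Everything else is a direct unwinding of definitions.
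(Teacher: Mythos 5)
Your proposal is correct and follows essentially the same route as the paper: locality of $\End(W^0)$ is obtained exactly as in the paper's proof, by factoring non-split-mono endomorphisms of $W^0$ through $\omega^0$ and checking that a sum of such cannot be a split monomorphism (via Anderson--Fuller). The only difference is cosmetic: for $\omega^0\in\rad_{\mathcal{W}}$ the paper decomposes $W^1$ into indecomposables and argues that a non-radical component would be invertible, whereas you invoke the (correct) characterisation that a morphism out of an object with local endomorphism ring lies in the radical if and only if it is not a split monomorphism --- slightly cleaner, same content.
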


\begin{proof}
We only prove (a), the proof for (b) is then dual. Suppose $\omega^0: W^0\rightarrow W^1$ is left almost split in $\mathcal{W}$. Let $\mu,\, \nu: W^0\rightarrow W^0$ be morphisms that are not split monomorphisms. Then there are morphisms $\mu',\, \nu':W^1\rightarrow W^0$ such that $\mu=\mu'\circ\omega^0$ and $\nu=\nu'\circ\omega^0$. By \cite[proposition\ 15.15]{AF}, in order to prove that $\End(W^0)$ is local, it is enough to prove that $\mu+\nu$ is not a split monomorphism.

Suppose for a contradiction that $\mu+\nu$ is a split monomorphism.
Hence there is a morphism $\gamma:W^0\rightarrow W^0$ such that $\gamma\circ (\mu+\nu)=1_{W^0}$. Then
\begin{align*}
    \gamma\circ(\mu'+\nu')\circ\omega^0=\gamma\circ (\mu+\nu)=1_{W^0}.
\end{align*}
Hence $\omega^0$ is a split monomorphism, contradicting our initial assumption. So $\End(W^0)$ is local.
%Extend $\omega^0$ and $\mu+\nu$ to $(d+2)$-angles. Then, since $(\mu'+\nu')\circ \omega^0=\mu+\nu$, by axiom (N3) from Definition \ref{defn_angles}, we can construct a commutative diagram of the form:
%\begin{align*}
%    \xymatrix {
%W^0\ar[r]^{\omega^0}\ar@{=}[d] & W^1\ar[r]^{\xi^1}\ar[d]^{\mu'+\nu'}& X^2\ar[r]\ar@{-->}[d]^{\psi^2} &\cdots\ar[r] & X^d\ar[r]^{\xi^d}\ar@{-->}[d]^{\psi^d} & X^{d+1}\ar[r]^{\xi^{d+1}}\ar@{-->}[d]^{\psi^{d+1}} &\Sigma^d W^0\ar@{=}[d]\\
%W^0\ar[r]_{\mu+\nu} & W^0\ar[r]_{\eta^1}& Y^2\ar[r] &\cdots\ar[r] & Y^d\ar[r]_{\eta^d} & %Y^{d+1}\ar[r]_{\eta^{d+1}} &\Sigma^d W^0.
%}
%\end{align*}
%Then, since $\mu+\nu$ is a split monomorphism, by Lemma \ref{lemma_epimono} it follows that $\eta^{d+1}=0$. Hence
%\begin{align*}
%    \xi^{d+1}=\eta^{d+1}\circ\psi^{d+1}=0,
%\end{align*}
%and $\omega^0$ is a split monomorphism by Lemma \ref{lemma_epimono}, contradicting our initial assumption. Hence $\End(W^0)$ is local.

Since $\End(W^0)$ is local, it follows that $W^0$ is indecomposable. Since $\mathcal{M}$ is Krull-Schmidt, there are indecomposable objects $W_1,\dots,\,W_t$ such that
\begin{align*}
W^1=W_1\oplus\cdots\oplus W_t.
\end{align*}
Moreover, $W_1,\dots,\,W_t$ are in $\mathcal{W}$ since $\mathcal{W}$ is closed under summands. Then we have
\begin{align*}
\omega^0=
\begin{psmallmatrix}
\omega^0_1\\ \vdots \\ \omega^0_t
\end{psmallmatrix}
:W^0\rightarrow W_1\oplus\cdots\oplus W_t.
\end{align*}
Suppose there is some $i\in\{ 1,\dots,\,t \}$ such that $\omega^0_i: W^0\rightarrow W_i$ is not in $\rad_\mathcal{W}$. Then, since $W^0$ and $W_i$ are both indecomposable, it follows that $\omega^0_i$ is invertible. Hence
\begin{align*}
\begin{psmallmatrix}
0& \cdots &0 & (\omega^0_i)^{-1} &0&\cdots &0
\end{psmallmatrix}
\circ \omega^0=
(\omega^0_i)^{-1}\circ \omega^0_i=
1_{W^0},
\end{align*} 
contradicting the fact that $\omega^0$ is not a split monomorphism. Hence such an $i$ does not exist and $\omega^0\in\rad_\mathcal{W}$.
\end{proof}

\begin{defn}\label{defn_AR_subc}
A $(d+2)$-angle of the form
\begin{align*}
\xymatrix {
\epsilon:& W^0\ar[r]^{\omega^0} & W^1\ar[r]^{\omega^1}& W^2\ar[r] &\cdots\ar[r] & W^d\ar[r]^{\omega^d} & W^{d+1}\ar[r]^{\omega^{d+1}} &\Sigma^d W^0,
}
\end{align*}
with $W^0,\,W^1,\dots,\, W^{d+1}$ in $\mathcal{W}$ is an \textit{Auslander-Reiten $(d+2)$-angle in $\mathcal{W}$} if the morphism $\omega^0$ is left almost split in $\mathcal{W}$, the morphism $\omega^d$ is right almost split in $\mathcal{W}$ and, when $d\geq 2$, also  $\omega^1,\dots,\,\omega^{d-1}$ are in $\rad_\mathcal{W}$.
\end{defn}

\begin{remark}\label{remark_rad}
Note that since $\mathcal{W}$ is a full subcategory of $\mathcal{M}$, then $\rad_\mathcal{W}$ is equal to the restriction of $\rad_\mathcal{M}$ to $\mathcal{W}$.
\end{remark}

\begin{lemma}\label{lemma_lm_lasW}
Let
\begin{align*}
\xymatrix {
\epsilon:& W^0\ar[r]^{\omega^0} & W^1\ar[r]^{\omega^1}& W^2\ar[r] &\cdots\ar[r] & W^d\ar[r]^{\omega^d} & W^{d+1}\ar[r]^{\omega^{d+1}} &\Sigma^d W^0
}
\end{align*}
be a $(d+2)$-angle with $W^0,\,W^1,\dots,\,W^{d+1}$ in $\mathcal{W}$. If $\omega^d$ is right almost split in $\mathcal{W}$ and $\omega^{d+1}$ is left minimal, then $\omega^0$ is left almost split in $\mathcal{W}$.
\end{lemma}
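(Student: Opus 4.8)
The plan is to verify the two requirements for $\omega^0$ to be left almost split in $\mathcal{W}$. First, $\omega^0$ is not a split monomorphism: if it were, then $\omega^{d+1}=0$ by Lemma \ref{lemma_epimono}, but left minimality of the zero morphism $\omega^{d+1}\colon W^{d+1}\to\Sigma^d W^0$ forces every endomorphism of $\Sigma^d W^0$, in particular the zero one, to be an isomorphism, so $\Sigma^d W^0=0$ and hence $W^0=0$; applying Lemma \ref{lemma_epimono} once more, $\omega^d$ would then be a split epimorphism, contradicting that it is right almost split in $\mathcal{W}$.

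Second, fix $\gamma\colon W^0\to Y$ with $Y\in\mathcal{W}$ not a split monomorphism and put $\delta:=(\Sigma^d\gamma)\circ\omega^{d+1}\colon W^{d+1}\to\Sigma^d Y$. The left rotation of $\epsilon$ is a $(d+2)$-angle by (N2), and applying Lemma \ref{lemma_zeroiffexists2} to it shows that $\gamma$ factors through $\omega^0$ if and only if $\delta=0$; so it is enough to prove $\delta=0$. Since $Y,W^{d+1}\in\mathcal{W}$ and $\mathcal{W}$ is closed under $d$-extensions (Setup \ref{setup}), choose a $(d+2)$-angle $\epsilon'\colon Y\to Q^1\to\cdots\to Q^d\xrightarrow{\mu^d}W^{d+1}\xrightarrow{\delta}\Sigma^d Y$ with all $Q^i\in\mathcal{W}$. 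By Lemma \ref{lemma_epimono} applied to $\epsilon'$, it now suffices to show that $\mu^d$ is a split epimorphism.

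Suppose it is not. As $Q^d\in\mathcal{W}$ and $\omega^d$ is right almost split in $\mathcal{W}$, we may write $\mu^d=\omega^d\circ t$ for some $t\colon Q^d\to W^d$, so the square with horizontal maps $\mu^d$ and $\omega^d$ and vertical maps $t$ and $1_{W^{d+1}}$ commutes; it lies in degrees $d$ and $d+1$ of $\epsilon'$ and $\epsilon$. Rotating both $(d+2)$-angles so that this square occupies degrees $0$ and $1$, applying (N3), and rotating back yields a morphism of $\Sigma^d$-sequences $\psi\colon\epsilon'\to\epsilon$ with $\psi^{d+1}=1_{W^{d+1}}$; its component in degree $d+2$ equals $\Sigma^d\psi^0$ for some $\psi^0\colon Y\to W^0$, and commutativity of the last square gives $(\Sigma^d\psi^0)\circ\delta=\omega^{d+1}$, that is, $\bigl(\Sigma^d(\psi^0\circ\gamma)\bigr)\circ\omega^{d+1}=\omega^{d+1}$. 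By left minimality of $\omega^{d+1}$, the morphism $\Sigma^d(\psi^0\circ\gamma)$, and hence $\psi^0\circ\gamma\colon W^0\to W^0$, is an isomorphism; this makes $\gamma$ a split monomorphism, contrary to our choice of $\gamma$. Therefore $\mu^d$ is a split epimorphism, $\delta=0$, and $\gamma$ factors through $\omega^0$.

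The part I expect to be the real content is the introduction of $\delta=(\Sigma^d\gamma)\circ\omega^{d+1}$, whose vanishing simultaneously encodes that $\gamma$ factors through $\omega^0$ and that $\mu^d$ is a split epimorphism, together with the final use of left minimality of $\omega^{d+1}$: it applies to $\Sigma^d(\psi^0\circ\gamma)$ precisely because the component in degree $d+2$ of a morphism of $\Sigma^d$-sequences is the suspension of its component in degree $0$. Everything else is bookkeeping --- two uses of Lemma \ref{lemma_epimono}, one of Lemma \ref{lemma_zeroiffexists2}, and a single careful application of (N3) after rotating the relevant commutative square into degrees $0,1$.
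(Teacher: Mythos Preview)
Your proof is correct and follows essentially the same route as the paper: extend $(\Sigma^d\gamma)\circ\omega^{d+1}$ to a $(d+2)$-angle with middle terms in $\mathcal{W}$, factor its penultimate map through $\omega^d$ using that $\omega^d$ is right almost split, complete to a morphism of $(d+2)$-angles via (N3), and derive a contradiction from left minimality of $\omega^{d+1}$. The only cosmetic differences are that the paper obtains ``$\omega^0$ is not a split monomorphism'' in one line directly from Lemma~\ref{lemma_epimono} (since $\omega^d$ is not a split epimorphism), and reads off the factorisation of $\gamma$ from an explicit retraction of the first map in the auxiliary $(d+2)$-angle rather than via Lemma~\ref{lemma_zeroiffexists2}.
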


\begin{proof}
Since $\omega^d$ is not a split epimorphism, Lemma \ref{lemma_epimono} implies that $\omega^0$ is not a split monomorphism. Let $\phi^0: W^0\rightarrow V^0$ be a morphism in $\mathcal{W}$ that is not a split monomorphism.
Extend $\Sigma^d (\phi^0)\circ \omega^{d+1}$ to a $(d+2)$-angle and consider the following commutative diagram, built using axiom (N3) from Definition \ref{defn_angles}:
\begin{align*}
    \xymatrix {
 W^0\ar[r]^{\omega^0}\ar[d]^{\phi^0}& W^1\ar[r]^{\omega^1}\ar@{-->}[d]^{\phi^1}& W^2\ar[r]\ar@{-->}[d]^{\phi^2} &\cdots\ar[r]  & W^d\ar[r]^{\omega^{d}}\ar@{-->}[d]^{\phi^{d}} &W^{d+1}\ar[rrr]^{\omega^{d+1}}\ar@{=}[d]&&& \Sigma^d W^0\ar[d]^{\Sigma^d (\phi^0)}\\
V^0\ar[r]_{\eta^0}& V^1\ar[r]_{\eta^1} & V^2\ar[r] &\cdots\ar[r] & V^d\ar[r]_{\eta^d} &W^{d+1}\ar[rrr]_{\Sigma^d (\phi^0)\circ \omega^{d+1}} &&&\Sigma^d V^0,
}
\end{align*}
where, as $V^0$ and $W^{d+1}$ are in $\mathcal{W}$, which is closed under $d$-extensions, we can choose $V^1,\dots,\, V^d$ in $\mathcal{W}$.

Suppose for a contradiction that $\eta^0$ is not a split monomorphism. Then $\eta^d$ is not a split epimorphism by Lemma \ref{lemma_epimono}. As $\omega^d$ is right almost split in $\mathcal{W}$ and $\eta^d:V^d\rightarrow W^{d+1}$ is a morphism in $\mathcal{W}$, then there is a morphism $\psi^d:V^d\rightarrow W^d$ such that $\eta^d=\omega^d\circ\psi^d$. So we can construct a commutative diagram of the form:
\begin{align*}
    \xymatrix {
V^0\ar[r]^{\eta^0}\ar@{-->}[d]^{\psi^0}& V^1\ar[r]\ar@{-->}[d]^{\psi^1}&\cdots\ar[r] & V^{d-1}\ar[r]^{\eta^{d-1}}\ar@{-->}[d]^{\psi^{d-1}}& V^{d}\ar[r]^{\eta^{d}}\ar[d]^{\psi^{d}} & W^{d+1}\ar[rrr]^{\Sigma^d (\phi^0)\circ\omega^{d+1}}\ar@{=}[d] &&& \Sigma^d V^{0}\ar@{-->}[d]^{\Sigma^d (\psi^0)}\\
W^0\ar[r]_{\omega^0}& W^1\ar[r] &\cdots\ar[r]& W^{d-1}\ar[r]_{\omega^{d-1}} & W^{d}\ar[r]_{\omega^{d}} & W^{d+1}\ar[rrr]_{\omega^{d+1}} &&& \Sigma^d W^{0}.
}
\end{align*}
Hence we have
\begin{align*}
    \Sigma^{d}(\psi^0\phi^0)\circ\omega^{d+1}=
    \Sigma^{d}(\psi^0)\circ \Sigma^d (\phi^0)\circ\omega^{d+1}= \omega^{d+1}.
\end{align*}
Since $\omega^{d+1}$ is left minimal, then $\Sigma^{d}(\psi^0\phi^0)$ is an isomorphism, and so also $\psi^0\phi^0$ is an isomorphism, contradicting our assumption that $\phi^0$ is not a split monomorphism. Hence $\eta^0$ is a split monomorphism and there is a morphism $\gamma:V^1\rightarrow V^0$ such that $\gamma\circ\eta^0=1_{V^0}$. Then
\begin{align*}
    \gamma\phi^1\omega^0 = \gamma\eta^0\phi^0=1_{V^0}\circ \phi^0=\phi^0,
\end{align*}
and so $\omega^0$ is left almost split in $\mathcal{W}$.
\end{proof}

\begin{lemma}\label{lemma_lras_gen}
Let
\begin{align*}
\xymatrix {
\epsilon:& W^0\ar[r]^{\omega^0} & W^1\ar[r]^{\omega^1}& W^2\ar[r] &\cdots\ar[r] & W^d\ar[r]^{\omega^d} & W^{d+1}\ar[r]^{\omega^{d+1}} &\Sigma^d W^0
}
\end{align*}
be a $(d+2)$-angle with $W^0,\,W^1,\dots,\,W^{d+1}$ in $\mathcal{W}$. Then the following are equivalent:
\begin{enumerate}[label=(\alph*)]
\item $\epsilon$ is an Auslander-Reiten $(d+2)$-angle in $\mathcal{W}$,
\item $\omega^0,\,\omega^1,\dots,\, \omega^{d-1}$ are in $\rad_\mathcal{W}$ and $\omega^d$ is right almost split in $\mathcal{W}$.
\item $\omega^1,\dots,\, \omega^{d-1},\,\omega^d$ are in $\rad_\mathcal{W}$ and $\omega^0$ is left almost split in $\mathcal{W}$.
\end{enumerate}
\end{lemma}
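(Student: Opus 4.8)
The plan is to prove $(a)\Leftrightarrow(b)$ directly and then deduce $(a)\Leftrightarrow(c)$ by a dual argument. Throughout, when $d\geq 2$ the condition ``$\omega^1,\dots,\omega^{d-1}\in\rad_\mathcal{W}$'' appears in each of (a), (b) and (c), so it is carried along for free; and by Remark \ref{remark_rad} we may identify $\rad_\mathcal{W}$ with the restriction of $\rad_\mathcal{M}$ to $\mathcal{W}$.

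For $(a)\Rightarrow(b)$: if $\epsilon$ is an Auslander-Reiten $(d+2)$-angle in $\mathcal{W}$ then $\omega^0$ is left almost split in $\mathcal{W}$, so Lemma \ref{lemma_indec_subc}(a) gives $\omega^0\in\rad_\mathcal{W}$, and together with the remaining defining data this is exactly (b). For $(b)\Rightarrow(a)$, the only assertion of (a) not already contained in (b) is that $\omega^0$ is left almost split in $\mathcal{W}$, and by Lemma \ref{lemma_lm_lasW} it suffices to show that $\omega^{d+1}$ is left minimal. Applying the left rotation of axiom (N2) once turns $\epsilon$ into the $(d+2)$-angle
\begin{align*}
\xymatrix{ W^1\ar[r]^{\omega^1} & W^2\ar[r] & \cdots\ar[r] & W^{d+1}\ar[r]^{\omega^{d+1}} & \Sigma^d W^0\ar[r]^-{(-1)^d\Sigma^d\omega^0} & \Sigma^d W^1, }
\end{align*}
in which $\omega^{d+1}$ occupies the ``$\xi^d$''-position and $(-1)^d\Sigma^d\omega^0$ the ``$\xi^{d+1}$''-position. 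Since $\omega^0\in\rad_\mathcal{W}=\rad_\mathcal{M}$ and $\Sigma^d$ is an automorphism, hence preserves $\rad_\mathcal{M}$, we have $(-1)^d\Sigma^d\omega^0\in\rad_\mathcal{M}$, so Lemma \ref{lemma_lrmin}(b) shows that $\omega^{d+1}$ is left minimal, as needed.

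The equivalence $(a)\Leftrightarrow(c)$ will be handled dually. For $(a)\Rightarrow(c)$ one uses Lemma \ref{lemma_indec_subc}(b) to obtain $\omega^d\in\rad_\mathcal{W}$ from the fact that $\omega^d$ is right almost split in $\mathcal{W}$. For $(c)\Rightarrow(a)$ the missing assertion is that $\omega^d$ is right almost split in $\mathcal{W}$; this follows from the dual of Lemma \ref{lemma_lm_lasW} --- namely, if $\omega^0$ is left almost split in $\mathcal{W}$ and $\omega^{d+1}$ is right minimal, then $\omega^d$ is right almost split in $\mathcal{W}$ --- whose proof is the evident dualisation of that of Lemma \ref{lemma_lm_lasW}. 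To apply it we must check that $\omega^{d+1}$ is right minimal, and for this we rotate $\epsilon$ via (N2) into the form $W^d\xrightarrow{\omega^d}W^{d+1}\xrightarrow{\omega^{d+1}}\Sigma^d W^0\to\cdots$, so that $\omega^{d+1}$ now sits in the ``$\xi^1$''-position with $\omega^d$ in the ``$\xi^0$''-position; since $\omega^d\in\rad_\mathcal{W}=\rad_\mathcal{M}$ by (c), Lemma \ref{lemma_lrmin}(a) gives that $\omega^{d+1}$ is right minimal.

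The step I expect to need the most care is precisely this passage from a radical condition on an end morphism to minimality of the connecting morphism $\omega^{d+1}$: unlike in the triangulated ($d=1$) case, for $d\geq 2$ the middle morphisms of a $(d+2)$-angle need not automatically lie in $\rad_\mathcal{M}$, so one must keep track of which $\omega^i$ lies in the radical and rotate $\epsilon$ so that Lemma \ref{lemma_lrmin} applies to the correct pair of consecutive morphisms. Stating and dualising Lemma \ref{lemma_lm_lasW} with the correct (right-minimal, rather than left-minimal) hypothesis is the only other point that requires attention, and is routine.
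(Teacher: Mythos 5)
Your proof is correct and follows essentially the same route as the paper: (a)$\Rightarrow$(b),(c) via Lemma \ref{lemma_indec_subc}, and the converse directions by deducing left (resp.\ right) minimality of $\omega^{d+1}$ from Lemma \ref{lemma_lrmin} applied to a rotation of $\epsilon$ and then invoking Lemma \ref{lemma_lm_lasW} (resp.\ its dual). The only cosmetic difference is that the paper routes through (b)$\Leftrightarrow$(c) before concluding (a), and leaves the rotation implicit, whereas you spell it out.
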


\begin{proof}
Note that (a) implies both (b) and (c) by Lemma \ref{lemma_indec_subc} and Definition \ref{defn_AR_subc}. 
Suppose now that (b) holds. Since $\omega^0$ is in $\rad_\mathcal{W}$ and so in $\rad_\mathcal{M}$, then so is $(-1)^d\Sigma^d(\omega^0)$ and $\omega^{d+1}$ is left minimal by Lemma \ref{lemma_lrmin}. Then, by Lemma \ref{lemma_lm_lasW}, it follows that $\omega^0$ is left almost split in $\mathcal{W}$, so (c) holds as $\omega^d\in\rad_\mathcal{M}$ by Lemma \ref{lemma_indec_subc}.

The fact that (c) implies (b) follows by a dual argument and so it is now clear that they both imply (a).
\end{proof}

\begin{lemma}\label{lemma_AReq_W}
Consider a $(d+2)$-angle of the form
\begin{align*}
\xymatrix {
\epsilon:& W^0\ar[r]^{\omega^0} & W^1\ar[r]^{\omega^1}& W^2\ar[r] &\cdots\ar[r] & W^d\ar[r]^{\omega^d} & W^{d+1}\ar[r]^{\omega^{d+1}} &\Sigma^d W^0,
}
\end{align*}
with $W^0,\,W^1,\dots,\,W^{d+1}$ in $\mathcal{W}$ and suppose that $\omega^d$ is right almost split in $\mathcal{W}$ and, if $d\geq 2$, also that $\omega^1,\dots,\,\omega^{d-1}$ are in $\rad_\mathcal{W}$. Then the following are equivalent:
\begin{enumerate}[label=(\alph*)]
\item $\End(W^0)$ is local,
\item $\omega^{d+1}$ is left minimal,
\item $\omega^0$ is in $\rad_\mathcal{W}$,
\item $\epsilon$ is an Auslander-Reiten $(d+2)$-angle in $\mathcal{W}$.
\end{enumerate}
\end{lemma}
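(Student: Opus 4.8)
The plan is to run the cycle $(d)\Rightarrow(a)\Rightarrow(c)\Rightarrow(b)\Rightarrow(d)$, picking up the equivalence $(b)\Leftrightarrow(c)$ along the way; this is the $\mathcal{W}$-analogue of Lemma \ref{lemma_AReq}, whose proof is the case $\mathcal{W}=\mathcal{M}$. The implication $(d)\Rightarrow(a)$ is immediate from Lemma \ref{lemma_indec_subc}(a): if $\epsilon$ is an Auslander-Reiten $(d+2)$-angle in $\mathcal{W}$ then $\omega^0$ is left almost split in $\mathcal{W}$, so $\End(W^0)$ is local.

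For $(a)\Rightarrow(c)$, assume $\End(W^0)$ is local, so $W^0$ is indecomposable. Since $\mathcal{M}$ is Krull--Schmidt and $\mathcal{W}$ is closed under summands, write $W^1=W_1\oplus\cdots\oplus W_t$ with each $W_j$ indecomposable and in $\mathcal{W}$, and decompose $\omega^0$ accordingly into components $\omega^0_j\colon W^0\to W_j$. If $\omega^0\notin\rad_\mathcal{W}$ then, by Remark \ref{remark_rad}, some $\omega^0_j$ lies outside $\rad_\mathcal{M}$; as $W^0$ and $W_j$ are indecomposable this forces $\omega^0_j$ to be an isomorphism, hence $\omega^0$ a split monomorphism. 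By Lemma \ref{lemma_epimono} this makes $\omega^d$ a split epimorphism, contradicting the fact that $\omega^d$ is right almost split in $\mathcal{W}$. Hence $\omega^0\in\rad_\mathcal{W}$. Next, $(c)\Leftrightarrow(b)$ by left rotation: by Remark \ref{remark_rad} we have $\omega^0\in\rad_\mathcal{W}$ iff $\omega^0\in\rad_\mathcal{M}$, and since $\Sigma^d$ is an automorphism this holds iff $(-1)^d\Sigma^d(\omega^0)\in\rad_\mathcal{M}$. Applying axiom (N2) to rotate $\epsilon$, in the rotated $(d+2)$-angle the second-to-last map is $\omega^{d+1}$ and the last map is $(-1)^d\Sigma^d(\omega^0)$, so Lemma \ref{lemma_lrmin}(b) yields that $\omega^{d+1}$ is left minimal iff $(-1)^d\Sigma^d(\omega^0)\in\rad_\mathcal{M}$; hence $(c)\Leftrightarrow(b)$, and in particular $(c)\Rightarrow(b)$.

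Finally, $(b)\Rightarrow(d)$: given that $\omega^d$ is right almost split in $\mathcal{W}$ and $\omega^{d+1}$ is left minimal, Lemma \ref{lemma_lm_lasW} gives that $\omega^0$ is left almost split in $\mathcal{W}$; together with the standing hypotheses ($\omega^d$ right almost split in $\mathcal{W}$, and $\omega^1,\dots,\omega^{d-1}\in\rad_\mathcal{W}$ when $d\geq2$), Definition \ref{defn_AR_subc} says $\epsilon$ is an Auslander--Reiten $(d+2)$-angle in $\mathcal{W}$. I expect the only delicate point to be the rotation step $(c)\Leftrightarrow(b)$: one must be careful about which map of the rotated sequence Lemma \ref{lemma_lrmin}(b) refers to, and must use that an automorphism preserves the radical. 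Every other step is a direct appeal to an already-established lemma.
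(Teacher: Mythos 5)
Your proof is correct. It follows essentially the same strategy as the paper's (a cycle of implications built from Lemmas \ref{lemma_epimono}, \ref{lemma_lrmin}, \ref{lemma_lm_lasW} and \ref{lemma_indec_subc}), with one genuine variation in how condition (a) enters the cycle: the paper proves (a)$\Rightarrow$(b) by quoting the external fact that a non-zero morphism into an object with local endomorphism ring is left minimal (\cite[lemma 2.4]{KH}, applied to $\omega^{d+1}\colon W^{d+1}\to\Sigma^d W^0$ after noting $\omega^{d+1}\neq 0$ via Lemma \ref{lemma_epimono}), whereas you prove (a)$\Rightarrow$(c) directly by decomposing $W^1$ into indecomposables and observing that a non-radical component of $\omega^0$ between indecomposables would be an isomorphism, forcing $\omega^0$ to be a split monomorphism and hence, by Lemma \ref{lemma_epimono}, $\omega^d$ to be a split epimorphism. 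This self-contained Krull--Schmidt argument mirrors the second half of the paper's proof of Lemma \ref{lemma_indec_subc} and avoids the citation; your rotation step for (c)$\Leftrightarrow$(b) is exactly the paper's own use of (N2) together with Lemma \ref{lemma_lrmin}(b) and the fact that the automorphism $\Sigma^d$ preserves $\rad_\mathcal{M}$, and is handled correctly. Both routes are sound; yours is marginally more elementary, the paper's marginally shorter.
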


\begin{proof}
(a)$\Rightarrow$(b). Suppose $\End(W^0)$ is local. By Lemma \ref{lemma_epimono}, since $\omega^d$ is not a split epimorphism, it follows that $\omega^{d+1}$ is non-zero. Then, as $\End(W^0)\cong \End(\Sigma^d W^0)$ is local, it follows that $\omega^{d+1}$ is left minimal by \cite[lemma\ 2.4]{KH}.

(b)$\Rightarrow$(d). Suppose $\omega^{d+1}$ is left minimal, then Lemma \ref{lemma_lm_lasW} implies that $\omega^0$ is left almost split in $\mathcal{W}$.

(d)$\Rightarrow$(a). Suppose $\epsilon$ is an Auslander-Reiten $(d+2)$-angle in $\mathcal{W}$. Then $\omega^0$ is left almost split in $\mathcal{W}$ and by lemma \ref{lemma_indec_subc}, we have that $\End(W^0)$ is local.

(c)$\Rightarrow$(d). Suppose $\omega^0$ is in $\rad_\mathcal{W}$. Then, by Lemma \ref{lemma_lras_gen}, it follows that $\epsilon$ is an Auslander-Reiten $(d+2)$-angle in $\mathcal{W}$.

(b)$\Rightarrow$(c). Suppose $\omega^{d+1}$ is left minimal. Lemma \ref{lemma_lrmin} implies that $(-1)^d\Sigma^d(\omega^0)\in\rad_\mathcal{M}$, so $\omega^0\in\rad_\mathcal{M}$ and $\omega^0\in\rad_\mathcal{W}$ by Remark \ref{remark_rad}.
\end{proof}

\section{Proof of Theorem B}\label{section_B}
In this section, we generalise \cite[theorem\ 3.1]{JP} to any $d\geq 1$, see Theorem B. To do so, we start by proving the higher version of \cite[lemmas\ 2.2 and 2.3]{JP} and another lemma.

We work in Setup \ref{setup}.

\begin{lemma}\label{lemma_simpsoc}
Consider an Auslander-Reiten $(d+2)$-angle in $\mathcal{M}$ of the form
\begin{align*}
\xymatrix {
\epsilon:& X^0\ar[r]^{\xi^0} & X^1\ar[r]^{\xi^1}& X^2\ar[r] &\cdots\ar[r] & X^d\ar[r]^-{\xi^d} & X^{d+1}\ar[r]^{\xi^{d+1}} &\Sigma^d X^0.
}
\end{align*}
View the abelian group $\Hom(X^{d+1}, \Sigma^d X^0)$ as an $\End(X^{d+1})$-right-module via composition of morphisms. The socle of this module is simple and equal to the submodule generated by $\xi^{d+1}$.
\end{lemma}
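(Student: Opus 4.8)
The plan is to establish the two halves of the statement separately: that $\xi^{d+1}\cdot\End(X^{d+1})$ is a simple $\End(X^{d+1})$-submodule of $M:=\Hom(X^{d+1},\Sigma^d X^0)$, and that it is contained in every nonzero submodule of $M$; together these force $\soc(M)=\xi^{d+1}\cdot\End(X^{d+1})$ and simplicity of the socle. As preliminaries I would record, from Remark~\ref{remark_AR}, that $X^0$ and $X^{d+1}$ are indecomposable with local endomorphism rings and that $\xi^0\in\rad_{\mathcal{M}}$; and that $\xi^d$ is right almost split, hence not a split epimorphism, so $\xi^{d+1}\neq 0$ by Lemma~\ref{lemma_epimono}. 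In particular $M\neq 0$.

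For simplicity of $\xi^{d+1}\cdot\End(X^{d+1})$, I would take $\phi\in\End(X^{d+1})$ with $\xi^{d+1}\circ\phi\neq 0$ and show $\phi$ is invertible. As $\End(X^{d+1})$ is local, otherwise $\phi\in\rad\End(X^{d+1})$, so $\phi$ is a non-isomorphism of the indecomposable $X^{d+1}$ and in particular not a split epimorphism; since $\xi^d$ is right almost split, $\phi=\xi^d\circ\psi$ for some $\psi\colon X^{d+1}\to X^d$, whence $\xi^{d+1}\circ\phi=\xi^{d+1}\circ\xi^d\circ\psi=0$ by Lemma~\ref{lemma_consecutive}, a contradiction. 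Therefore $\xi^{d+1}\phi\cdot\End(X^{d+1})=\xi^{d+1}\cdot\End(X^{d+1})$, so every nonzero element of $\xi^{d+1}\cdot\End(X^{d+1})$ generates it; being nonzero, the module is simple.

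The core of the proof is to show $\xi^{d+1}\in\mu\cdot\End(X^{d+1})$ for every nonzero $\mu\colon X^{d+1}\to\Sigma^d X^0$. I would extend $\Sigma^{-d}\mu\colon\Sigma^{-d}X^{d+1}\to X^0$ to a $(d+2)$-angle via axiom (N1), with $\Sigma^{-d}\mu$ as its leftmost morphism, and then left-rotate once via (N2) to get a $(d+2)$-angle
\[
\epsilon_\mu\colon\quad X^0\xrightarrow{\ \zeta^0\ }Z^1\longrightarrow\cdots\longrightarrow Z^d\longrightarrow X^{d+1}\xrightarrow{\ (-1)^d\mu\ }\Sigma^d X^0 ,
\]
which has $X^0$ in position $0$ and $X^{d+1}$ in position $d+1$, just as $\epsilon$ does. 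Since $(-1)^d\mu\neq 0$, Lemma~\ref{lemma_epimono} applied to $\epsilon_\mu$ shows $\zeta^0$ is not a split monomorphism, so by the left almost split property of $\xi^0$ it factors as $\zeta^0=\gamma^1\circ\xi^0$ for some $\gamma^1\colon X^1\to Z^1$. Feeding the resulting commutative square (with first vertical map $1_{X^0}$ and second vertical map $\gamma^1$) into axiom (N3) produces a morphism of $(d+2)$-angles $\epsilon\to\epsilon_\mu$ that is the identity on $X^0$; its last component $\varphi^{d+1}\colon X^{d+1}\to X^{d+1}$ satisfies $(-1)^d\mu\circ\varphi^{d+1}=\Sigma^d(1_{X^0})\circ\xi^{d+1}=\xi^{d+1}$, so $\xi^{d+1}\in\mu\cdot\End(X^{d+1})$. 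To conclude, any simple submodule $S$ of $M$ contains a nonzero $\mu$, hence contains $\mu\cdot\End(X^{d+1})\ni\xi^{d+1}$, hence contains the simple module $\xi^{d+1}\cdot\End(X^{d+1})$ and thus equals it; so $\soc(M)=\xi^{d+1}\cdot\End(X^{d+1})$ is simple.

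The step I expect to be the real obstacle is the construction of $\epsilon_\mu$ and the bookkeeping that $X^0$ and $X^{d+1}$ occupy the same positions in it as in $\epsilon$, so that (N3) returns an \emph{endomorphism} of $X^{d+1}$ in the last slot rather than some map into another object. Attempting instead to argue directly with the long exact $\Hom$-sequences attached to $\epsilon$ appears only to reduce the essentiality of $\xi^{d+1}\cdot\End(X^{d+1})$ to itself. For $d=1$ this is exactly the proof of \cite[lemma~2.2]{JP}.
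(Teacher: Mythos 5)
Your proof is correct and follows essentially the same route as the paper: extend $\mu$ to a $(d+2)$-angle ending in $X^{d+1}\to\Sigma^d X^0$, use that $\xi^0$ is left almost split together with (N3) to realise $\xi^{d+1}$ as a right multiple of $\mu$, and use that non-invertible endomorphisms of $X^{d+1}$ factor through the right almost split map $\xi^d$ (hence are killed by $\xi^{d+1}$) to get simplicity. The only cosmetic difference is that the paper phrases the simplicity step as the Jacobson radical of the local ring $\End(X^{d+1})$ annihilating $\xi^{d+1}$, which is equivalent to your ``every nonzero element of the cyclic module generates it'' argument, and it simply asserts the existence of the completed $(d+2)$-angle ending in $\mu$ where you construct it explicitly from (N1) and (N2).
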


\begin{proof}
Let $M$ be a non-zero submodule of $\Hom(X^{d+1},\Sigma^d X^0)$ and pick a non-zero element $\mu: X^{d+1}\rightarrow \Sigma^d X^0$ in $M$. Extend $\mu$ to a $(d+2)$-angle:
\begin{align*}
\xymatrix {
X^0\ar[r]^{\eta^0} & Y^1\ar[r]^{\eta^1}& Y^2\ar[r] &\cdots\ar[r] & Y^d\ar[r]^{\eta^d} & X^{d+1}\ar[r]^{\mu} &\Sigma^d X^0.
}
\end{align*}
Since $\mu$ is non-zero, then $\eta^0$ is not a split monomorphism by Lemma \ref{lemma_epimono}. Then, as $\xi^0$ is left almost split, there is a morphism $\psi^1: X^1\rightarrow Y^1$ such that $\psi^1\circ\xi^0=\eta^0$. So, by axiom (N3) from Definition \ref{defn_angles}, there exist morphisms $\psi^2,\dots,\,\psi^{d+1}$ making the following diagram commutative:
\begin{align*}
    \xymatrix {
X^0\ar[r]^{\xi^0}\ar@{=}[d] & X^1\ar[r]^{\xi^1}\ar[d]^{\psi^1}& X^2\ar[r]\ar@{-->}[d]^{\psi^2} &\cdots\ar[r] & X^d\ar[r]^{\xi^d}\ar@{-->}[d]^{\psi^d} & X^{d+1}\ar[r]^{\xi^{d+1}}\ar@{-->}[d]^{\psi^{d+1}} &\Sigma^d X^0\ar@{=}[d]\\
X^0\ar[r]_{\eta^0} & Y^1\ar[r]_{\eta^1}& Y^2\ar[r] &\cdots\ar[r] & Y^d\ar[r]_{\eta^d} & X^{d+1}\ar[r]_{\mu} &\Sigma^d X^0.
}
\end{align*}
In particular, we have $\mu\circ\psi^{d+1}=\xi^{d+1}$. So, in the $\End(X^{d+1})$-module $\Hom(X^{d+1},\Sigma^d X^0)$, the element $\xi^{d+1}$ is a multiple of $\mu$. So $\xi^{d+1}$ is in $M$ and the non-zero submodule of $\Hom(X^{d+1},\Sigma^d X^0)$ generated by $\xi^{d+1}$ is contained in $M$. Then, the socle of $\Hom(X^{d+1},\Sigma^d X^0)$ is the submodule generated by $\xi^{d+1}$.

Note that $\End(X^{d+1})$ is local, as $\epsilon$ is an Auslander-Reiten $(d+2)$-angle. Since the socle of $\Hom(X^{d+1},\Sigma^d X^0)$ is generated by the single element $\xi^{d+1}$, it follows that it is simple if it is annihilated by the Jacobson radical of $\End(X^{d+1})$. Let $\rho: X^{d+1}\rightarrow X^{d+1}$ be in the radical of $\End(X^{d+1})$, then by the dual of \cite[proposition\ 15.15(e)]{AF}, we have that $\rho$ has no right inverse. Hence $\rho$ is not a split epimorphism and, since $\xi^d$ is right almost split, there is a morphism $\rho':X^{d+1}\rightarrow X^d$ such that $\rho=\xi^d\circ \rho'$. Then, by Lemma \ref{lemma_consecutive} we have
\begin{align*}
    \xi^{d+1}\circ \rho=\xi^{d+1}\circ\xi^d\circ \rho'=0\circ \rho'=0,
\end{align*}
as we wished to prove.
\end{proof}

\begin{defn} [{\cite[section\ 0]{J}}]
For an additive subcategory $\mathcal{U}\subseteq \mathcal{M}$, we define
\begin{align*}
    \mathcal{U}\text{-exact}=
    \Bigg\{
    \begin{matrix}M^1\rightarrow \cdots \rightarrow M^d \\ \text{ is a complex in } \mathcal{M}\end{matrix}
    \,\, \Bigg\rvert \,\,\begin{matrix}0\rightarrow\Hom_{\mathcal{M}}(U,M^1)\rightarrow \cdots \rightarrow \Hom_{\mathcal{M}}(U,M^d)\rightarrow 0 \\ \text{ is exact for each } U\in\mathcal{U}\end{matrix}
    \Bigg\}.
    \end{align*}
\end{defn}

\begin{lemma}\label{lemma_cover}
Let $W$ be in $\mathcal{W}$ and let
\begin{align*}
\xymatrix {
\epsilon:& X^0\ar[r]^{\xi^0} & X^1\ar[r]^{\xi^1}& X^2\ar[r] &\cdots\ar[r] & X^d\ar[r]^{\xi^d} & W\ar[r]^{\xi^{d+1}} &\Sigma^d X^0
}
\end{align*}
be an Auslander-Reiten $(d+2)$-angle. Suppose $\nu:V\rightarrow X^0$ is a $\mathcal{W}$-cover. Then $V$ is either zero or indecomposable.
\end{lemma}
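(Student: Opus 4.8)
If $V=0$ there is nothing to prove, so assume $V\neq 0$; since $\mathcal{M}$ is Krull-Schmidt, showing that $V$ is indecomposable is the same as showing that $\End(V)$ is local. The first step is an observation that does not use the Auslander-Reiten property: since $\nu$ is a $\mathcal{W}$-precover and its source $V$ lies in $\mathcal{W}$, \emph{every} morphism $V\to X^0$ factors through $\nu$, so the map $\End(V)\to\Hom(V,X^0)$, $\psi\mapsto\nu\circ\psi$, is a surjection of right $\End(V)$-modules; call its kernel $I=\{\psi\in\End(V)\mid\nu\circ\psi=0\}$, which is a right ideal. Right minimality of $\nu$ says exactly that $1_V+I\subseteq\operatorname{Aut}(V)$, and since $I$ is a right ideal this forces $I\subseteq\rad\End(V)$. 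Hence $\Hom(V,X^0)\cong\End(V)/I$ as right $\End(V)$-modules and, as $I\subseteq\rad\End(V)$, the ring $\End(V)$ is local if and only if the right module $\End(V)/I\cong\Hom(V,X^0)$ has a unique maximal submodule; equivalently, $V$ is indecomposable if and only if $\Hom(V,X^0)$ has simple top.

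So it suffices to rule out a decomposition $V=V^{(1)}\oplus\cdots\oplus V^{(n)}$ into indecomposables with $n\geq 2$. Writing $\nu=(\nu^{(1)},\dots,\nu^{(n)})$ componentwise, right minimality of $\nu$ forces each $\nu^{(i)}\neq 0$ (otherwise the complementary idempotent $\varphi=1_V-\iota_i\pi_i$ would satisfy $\nu\varphi=\nu$ without being invertible), and then each $\nu^{(i)}\in\rad_{\mathcal{M}}(V^{(i)},X^0)$: if some $\nu^{(i)}$ were invertible (both $V^{(i)}$ and $X^0$ being indecomposable), pre-composing $\nu$ with a suitable unitriangular automorphism of $V$ — which keeps the morphism right minimal — would produce a right minimal morphism annihilating the nonzero summand $\bigoplus_{j\neq i}V^{(j)}$, a contradiction. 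Thus $\nu\in\rad_{\mathcal{M}}(V,X^0)$.

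The remaining step — the essential one — is to derive a contradiction from $n\geq 2$, and here one must genuinely use that $\epsilon$ is an \emph{Auslander-Reiten} $(d+2)$-angle rather than merely that $X^0$ is indecomposable. By the reduction above this amounts to showing the right $\End(V)$-module $\Hom(V,X^0)\cong\End(V)/I$ has simple top. The plan is to obtain this by feeding the radical morphism $\nu$ into $\epsilon$ via axiom (N3) and the exact $\Hom$-sequences attached to $\epsilon$, and exploiting Lemma \ref{lemma_simpsoc} — that $\xi^{d+1}$ generates the simple socle of $\Hom(W,\Sigma^d X^0)$ over $\End(W)$ — together with $\xi^0$ being left almost split, to see that $\Hom(V,X^0)$ modulo $\rad\End(X^0)\cdot\Hom(V,X^0)$ is one-dimensional over the residue division ring of $\End(X^0)$, which combined with the first paragraph pins the top down as simple. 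I expect this last step to be the main obstacle; in particular one has to route the Auslander-Reiten information through Lemma \ref{lemma_simpsoc} and the Hom-exact sequences and apply the precover property only to honest morphisms into $X^0$, since $\mathcal{W}$ need not be stable under $\Sigma^{\pm d}$ and so $\nu$ cannot simply be shifted to a $\mathcal{W}$-cover of $\Sigma^d X^0$.
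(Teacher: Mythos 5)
Your proposal does not contain a proof: the decisive step is explicitly deferred. The first paragraph is correct but gains nothing — since $I=\{\psi\in\End(V)\mid\nu\psi=0\}$ is contained in $\rad\End(V)$, the top of $\End(V)/I$ is $\End(V)/\rad\End(V)$, so ``$\Hom(V,X^0)$ has simple top as a right $\End(V)$-module'' is a verbatim restatement of ``$\End(V)$ is local'', i.e.\ of the conclusion. The second paragraph (each component $\nu^{(i)}$ is non-zero and radical) is fine but unused. All of the actual content is then packed into the third paragraph, which is announced as a ``plan'' whose last step you yourself expect ``to be the main obstacle''. Worse, the target of that plan is suspect: you propose to show that $\Hom(V,X^0)$ modulo $\rad\End(X^0)\cdot\Hom(V,X^0)$ is one-dimensional over $\End(X^0)/\rad\End(X^0)$, but by your own second paragraph $\nu\in\rad_{\mathcal M}(V,X^0)$, and $\nu$ generates $\Hom(V,X^0)$ as a right $\End(V)$-module by the precover property, so $\Hom(V,X^0)=\nu\circ\End(V)\subseteq\rad_{\mathcal M}(V,X^0)$; the natural candidate quotient therefore vanishes rather than being one-dimensional. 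In any case no argument is given, so there is a genuine gap.

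For comparison, the paper's proof runs through $\Hom(W,\Sigma^d V)$ rather than $\Hom(V,X^0)$, in two steps. First, for each indecomposable summand $V_i$ of $V$ one extends $\Sigma^d(\nu\iota_i)$ to a $(d+2)$-angle ending in $\Sigma^d X^0$; right minimality of $\nu$ gives $\nu\iota_i\neq0$, so by Lemma \ref{lemma_epimono} and left-almost-splitness of $\xi^0$ one gets a morphism of $(d+2)$-angles fixing the end terms, whence a morphism $\Sigma^{-d}W\rightarrow V_i$ composing with $\nu\iota_i$ to $\Sigma^{-d}(\xi^{d+1})\neq0$; thus \emph{every} indecomposable summand satisfies $\Hom(W,\Sigma^d V_i)\neq0$. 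Second, extending $\nu$ itself to a $(d+2)$-angle $V\rightarrow X^0\rightarrow Z^1\rightarrow\cdots\rightarrow Z^d\rightarrow\Sigma^d V$, the complex $Z^1\rightarrow\cdots\rightarrow Z^d$ is $\mathcal W$-exact, so the connecting map $\Hom(W,\Sigma^d V)\rightarrow\Hom(W,\Sigma^d X^0)$ is injective; since the target has simple socle over $\End(W)$ by Lemma \ref{lemma_simpsoc}, the source is zero or indecomposable as an $\End(W)$-module, so \emph{at most one} summand satisfies $\Hom(W,\Sigma^d V_i)\neq0$. The two steps together force $V$ to have exactly one indecomposable summand. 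Your sketch correctly senses that Lemma \ref{lemma_simpsoc} and the Hom-exact sequences are the right tools, but it points them at the wrong Hom-group and never carries out the argument.
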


\begin{proof}
Suppose $V$ is non-zero and recall that $\mathcal{M}$ is Krull-Schmidt. Let $V_i$ be an indecomposable direct summand of $V$, let $\iota_i :V_i\rightarrow V$ be the inclusion of $V_i$ into $V$ and $\nu_i :=\nu\circ\iota_i$.
Extend $\Sigma^d (\nu_i)$ to a $(d+2)$-angle:
\begin{align*}
\xymatrix {
X^0\ar[r]^{\eta^0} & Y^1\ar[r]^{\eta^1}& Y^2\ar[r] &\cdots\ar[r] & Y^d\ar[r]^{\eta^d} & \Sigma^d V_i\ar[r]^-{\Sigma^d (\nu_i)} &\Sigma^d X^0.
}
\end{align*}
Since $\nu$ is a $\mathcal{W}$-cover, then $\nu_i$ is non-zero and so $\Sigma^d (\nu_i)$ is non-zero. Hence $\eta^0$ is not a split monomorphism by Lemma \ref{lemma_epimono} and, as $\xi^0$ is left almost split, there exists a morphism $\psi^1 :X^1\rightarrow Y^1$ such that $\psi^1\circ\xi^0=\eta^0$. Then, by axiom (N3) from Definition \ref{defn_angles}, there are morphisms $\psi^2,\dots,\,\psi^{d+1}$ making the following diagram commutative:
\begin{align*}
    \xymatrix {
X^0\ar[r]^{\xi^0}\ar@{=}[d] & X^1\ar[r]^{\xi^1}\ar[d]^{\psi^1}& X^2\ar[r]\ar@{-->}[d]^{\psi^2} &\cdots\ar[r] & X^d\ar[r]^{\xi^d}\ar@{-->}[d]^{\psi^d} & W\ar[r]^{\xi^{d+1}}\ar@{-->}[d]^{\psi^{d+1}} &\Sigma^d X^0\ar@{=}[d]\\
X^0\ar[r]_{\eta^0} & Y^1\ar[r]_{\eta^1}& Y^2\ar[r] &\cdots\ar[r] & Y^d\ar[r]_{\eta^d} & \Sigma^d V_i\ar[r]_-{\Sigma^d (\nu_i)} &\Sigma^d X^0.
}
\end{align*}
In particular, we have $\Sigma^d (\nu_i)\circ \psi^{d+1}=\xi^{d+1}$. Then, letting $\varphi:=\Sigma^{-d}(\psi^{d+1}):\Sigma^{-d}W\rightarrow V_i$, we have $\nu_i\circ \varphi=\Sigma^{-d}(\xi^{d+1})$. As $\xi^{d+1}$ is non-zero, it follows that $\varphi$ is non-zero.

Hence every indecomposable direct summand $V_i$ of $V$ permits a non-zero morphism $\Sigma^{-d} W\rightarrow V_i$. We complete the proof by showing that at most one indecomposable direct summand of $V$ can permit such a morphism.

Extend $\nu$ to a $(d+2)$-angle of the form
\begin{align*}
\xymatrix {
V\ar[r]^{\nu} & X^0\ar[r]^{\omega^0}& Z^1\ar[r]^{\omega^1}& Z^2\ar[r] &\cdots\ar[r] & Z^{d-1}\ar[r]^-{\omega^{d-1}} & Z^d\ar[r]^{\omega^d} &\Sigma^d V.
}
\end{align*}
Consider the exact sequence
\begin{align*}
    \Hom (W, Z^{d-1})\xrightarrow{\widetilde{\omega^{d-1}}}\Hom(W,Z^d)\xrightarrow{\widetilde{\omega^d}}\Hom(W,\Sigma^d V)\xrightarrow{\phi}\Hom(W,\Sigma^d X^0),
\end{align*}
where, for a morphism $\eta$ we use the notation $\widetilde{\eta}:=\Hom(W,\eta)$  and $\phi:=\widetilde{(-1)^d\Sigma^d(\nu)}$ for readability.
Note that $Z^1\rightarrow \dots \rightarrow Z^d$ is in $\mathcal{W}$-exact by \cite[lemma\ 2.1]{J}. Hence $\widetilde{\omega^{d-1}}$ is surjective, so that $\widetilde{\omega^d}$ is the zero map and $\phi$ is injective. Viewing $\phi$ as a homomorphism of finite-dimensional right modules over the finite-dimensional $k$-algebra $\End(W)$, the target $\Hom(W,\Sigma^d X^0)$ has simple socle by Lemma \ref{lemma_simpsoc}. Hence the image is either zero or indecomposable. Since $\phi$ is injective, then the same is true for the source $\Hom(W,\Sigma^d V)$. So, if $V=V_1\oplus\dots\oplus V_t$, there can be at most one $i\in\{1,\dots,t\}$ such that $\Hom(W,\Sigma^d V_i)\cong \Hom(\Sigma^{-d}W, V_i)$ is non-zero.

Hence, as we claimed, there is at most one indecomposable summand $V_i$ of $V$ permitting a non-zero morphism $\Sigma^{-d}W\rightarrow V_i$.
\end{proof}

\begin{lemma}\label{lemma_sublemma}
Consider an Auslander-Reiten $(d+2)$-angle in $\mathcal{M}$ of the form
\begin{align*}
\xymatrix {
Y^0\ar[r]^{\eta^0} & Y^1\ar[r]^{\eta^1}& Y^2\ar[r] &\cdots\ar[r] & Y^d\ar[r]^{\eta^d} & W\ar[r]^-{\eta^{d+1}} &\Sigma^d Y^0,
}
\end{align*}
and any $U^0\in\mathcal{M}$. Then:
\begin{enumerate}[label=(\alph*)]
    \item for every non-zero morphism $\delta:\Sigma^d U^0\rightarrow\Sigma^d Y^0$, there is a morphism $\phi:W\rightarrow\Sigma^d U^0$ such that $\delta\circ\phi=\eta^{d+1}$;
    \item for every non-zero morphism $\phi:W\rightarrow\Sigma^d U^0$, there is a morphism $\delta:\Sigma^d U^0\rightarrow\Sigma^d Y^0$ such that $\delta\circ\phi=\eta^{d+1}$;
\end{enumerate}
\end{lemma}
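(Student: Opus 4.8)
The plan is to exploit the Auslander–Reiten property of the given $(d+2)$-angle together with the fact, established in Lemma \ref{lemma_simpsoc}, that $\Hom(W,\Sigma^d Y^0)$ has simple socle generated by $\eta^{d+1}$ when viewed as a right $\End(W)$-module. Part (a) is the easier direction. Given a non-zero $\delta:\Sigma^d U^0\to\Sigma^d Y^0$, I would first rotate the AR $(d+2)$-angle so that $\eta^{d+1}:W\to\Sigma^d Y^0$ becomes the first morphism of a $(d+2)$-angle; equivalently, extend $\eta^{d+1}$ (via axiom (N1)) to a $(d+2)$-angle and use Lemma \ref{lemma_epimono} to see that, since $\eta^{d+1}\neq 0$ (it is non-zero because $\eta^d$ is not a split epimorphism, $\epsilon$ being an AR $(d+2)$-angle), the relevant morphism is not a split monomorphism. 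Then, since $\eta^0$ is left almost split (more precisely, using that $\delta$ is not a split monomorphism — $\delta$ cannot be a split mono, since $\Sigma^d Y^0$ is indecomposable and $\delta$ factors through... actually one argues $\Sigma^{-d}\delta$ is not a split mono because otherwise $\Sigma^{-d}Y^0$ would be a summand of $U^0$ and the composite would force $\eta^{d+1}$ to behave incompatibly), the left-almost-split property lets me lift $\delta$ through $\eta^{d+1}$, yielding the desired $\phi$ with $\delta\circ\phi=\eta^{d+1}$. I expect this to mirror the argument in Lemma \ref{lemma_simpsoc} almost verbatim, building a morphism of $(d+2)$-angles via axiom (N3) and reading off the rightmost square.

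For part (b), let $\phi:W\to\Sigma^d U^0$ be non-zero. Here the idea is to use the socle computation directly. Consider $\Hom(W,\Sigma^d Y^0)$ as a right $\End(W)$-module and $\Hom(W,\Sigma^d U^0)$ likewise; the map $\phi$ generates a non-zero $\End(W)$-submodule of $\Hom(W,\Sigma^d U^0)$. I would instead extend $\phi$ to a $(d+2)$-angle and, arguing as in Lemma \ref{lemma_simpsoc}, use that $\eta^0$ is left almost split to produce a morphism of $(d+2)$-angles whose last vertical component $\psi^{d+1}:\Sigma^d Y^0\to\Sigma^d U^0$ satisfies $\psi^{d+1}\circ\eta^{d+1}=\phi$ — wait, this gives the wrong direction. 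The correct approach: apply part (a) in a suitable dual form, or more directly, note that $\eta^{d+1}$ generates the socle of $\Hom(W,\Sigma^d Y^0)$, and since $\phi\neq 0$, the submodule it generates in $\Hom(W,\Sigma^d U^0)$ contains a copy of the simple socle; precomposing the identity of $W$ suitably, there is $\rho\in\End(W)$ with $\phi\circ\rho$ spanning a simple submodule isomorphic to that generated by $\eta^{d+1}$. Then one builds $\delta$ by extending $\phi$ to a $(d+2)$-angle and lifting along $\eta^0$ being left almost split: the non-zero map $\phi$ gives, after extending, a split-mono obstruction, and the AR property produces the comparison morphism $\delta$ with $\delta\circ\phi=\eta^{d+1}$.

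The main obstacle I anticipate is \textbf{getting the directions and minimality right in part (b)}: one must be careful that $\phi$ being merely non-zero (not, say, right almost split or part of an AR datum) is enough, and this is exactly where the simple-socle statement of Lemma \ref{lemma_simpsoc} does the heavy lifting — it guarantees that \emph{any} non-zero map out of $W$ into $\Sigma^d U^0$, once we know $\Sigma^d Y^0$ sits appropriately, can be compared to the canonical connecting map $\eta^{d+1}$. Concretely, I would extend $\phi:W\to\Sigma^d U^0$ to a $(d+2)$-angle $\Sigma^{-d}(\Sigma^d U^0)\to Z^1\to\cdots\to Z^d\to W\xrightarrow{\phi}\Sigma^d U^0$, observe via Lemma \ref{lemma_epimono} that the leftmost morphism $U^0\to Z^1$ is not a split monomorphism (as $\phi\neq 0$), invoke that $\eta^0$ is left almost split to factor $U^0\to Z^1$ through $\eta^0$, complete to a morphism of $(d+2)$-angles by (N3), and extract $\delta=$ (the induced map $\Sigma^d U^0 \to \Sigma^d Y^0$ read backwards) $=\Sigma^d$ of the degree-zero comparison, concluding $\delta\circ\phi=\eta^{d+1}$. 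Checking that this $\delta$ genuinely lands where claimed and that the rightmost square of the (N3)-diagram reads $\delta\circ\phi=\eta^{d+1}$ rather than some twisted variant is the step requiring the most care, but it is ultimately the same bookkeeping as in Lemma \ref{lemma_simpsoc} and Lemma \ref{lemma_cover}.
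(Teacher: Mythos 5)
Your plan for part (a) is, once the confused aside about whether $\delta$ itself is a split monomorphism is discarded, the paper's argument: extend $\delta$ to a $(d+2)$-angle $Y^0\xrightarrow{\delta^0}M^1\to\cdots\to M^d\to\Sigma^d U^0\xrightarrow{\delta}\Sigma^d Y^0$, note that $\delta\neq 0$ forces $\delta^0$ not to be a split monomorphism by Lemma \ref{lemma_epimono}, factor $\delta^0$ through the left almost split morphism $\eta^0$, complete via (N3), and read off the last square. (The relevant non-split-mono condition is on $\delta^0$, the first morphism of the angle you build, not on $\delta$; your sentence about $\Sigma^{-d}Y^0$ being a summand of $U^0$ is a red herring, but "mirror Lemma \ref{lemma_simpsoc} verbatim" lands you in the right place.)

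Part (b), as you have concretely written it, contains a genuine error. You extend $\phi$ to a $(d+2)$-angle $U^0\to Z^1\to\cdots\to Z^d\xrightarrow{\zeta^d}W\xrightarrow{\phi}\Sigma^d U^0$ and then propose to ``invoke that $\eta^0$ is left almost split to factor $U^0\to Z^1$ through $\eta^0$''. This does not typecheck: $\eta^0$ being left almost split only factors morphisms \emph{out of $Y^0$}, and $U^0\to Z^1$ has source $U^0$. The correct step --- and this is what the paper means by ``dual argument'' --- is to observe via Lemma \ref{lemma_epimono} that $\zeta^d:Z^d\to W$ is not a split epimorphism (since $\phi\neq0$), use that $\eta^d$ is \emph{right} almost split to find $\psi^d:Z^d\to Y^d$ with $\eta^d\circ\psi^d=\zeta^d$, and then complete (after rotating so that (N3) applies) to a morphism of $(d+2)$-angles from the $\phi$-angle to the $\eta$-angle whose component at $W$ is the identity; its last component is the desired $\delta:\Sigma^d U^0\to\Sigma^d Y^0$ with $\delta\circ\phi=\eta^{d+1}$. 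The socle detour in the middle of your part (b) concerns submodules of $\Hom(W,\Sigma^d U^0)$ and does not produce the required map $\delta$; it is not needed and should be dropped.
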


\begin{proof}
\begin{enumerate}[label=(\alph*)]
    \item Extend $\delta$ to a $(d+2)$-angle of the form
    \begin{align*}
\xymatrix {
Y^0\ar[r]^{\delta^0}& M^1\ar[r]^{\delta^1}& M^2\ar[r] &\cdots\ar[r] & M^d\ar[r]^{\delta^d} & \Sigma^d U^0\ar[r]^{\delta} &\Sigma^d Y^0.
}
\end{align*}
Since $\delta$ is non-zero, then $\delta^0$ is not a split monomorphism, so there is $\phi^1:Y^1\rightarrow M^1$ such that $\delta^0=\phi^1\circ\eta^0$. So, by axiom (N3) from Definition \ref{defn_angles}, there exist morphisms $\phi^2,\dots,\,\phi^{d+1}$ making the following diagram commutative:
\begin{align*}
    \xymatrix {
Y^0\ar[r]^{\eta^0}\ar@{=}[d] & Y^1\ar[r]^{\eta^1}\ar[d]^{\phi^1}& Y^2\ar[r]\ar@{-->}[d]^{\phi^2} &\cdots\ar[r] & Y^d\ar[r]^{\eta^d}\ar@{-->}[d]^{\phi^d} & W\ar[r]^{\eta^{d+1}}\ar@{-->}[d]^{\phi^{d+1}} &\Sigma^d Y^0\ar@{=}[d]\\
Y^0\ar[r]_{\delta^0} & M^1\ar[r]_{\delta^1}& M^2\ar[r] &\cdots\ar[r] & M^d\ar[r]_{\delta^d} & \Sigma^d U^0\ar[r]_{\delta} &\Sigma^d Y^0.
}
\end{align*}
Then $\phi:=\phi^{d+1}$ is such that $\delta\circ\phi=\eta^{d+1}$.
\item Follows by a dual argument.
\end{enumerate}
\end{proof}

%\begin{theorem}\label{thm_general}
%Let $W$ be in $\mathcal{W}$ and suppose that there exists $U^0$ in $\mathcal{W}$ and a non-zero morphism $\gamma^{d+1}:W\rightarrow \Sigma^d U^0$. Let
%\begin{align*}
%\xymatrix {
%\epsilon:& X^0\ar[r]^{\xi^0} & X^1\ar[r]^{\xi^1}& X^2\ar[r] &\cdots\ar[r] & X^d\ar[r]^{\xi^d} & W\ar[r]^{\xi^{d+1}} &\Sigma^d X^0
%}
%\end{align*}
%be an Auslander-Reiten $(d+2)$-angle in $\mathcal{M}$. Then the following are equivalent:
%\begin{enumerate}[label=(\alph*)]
 %   \item $X^0$ has a $\mathcal{W}$-cover of the form $\varphi:W^0\rightarrow X^0$,
  %  \item there is an Auslander-Reiten $(d+2)$-angle in $\mathcal{W}$ of the form
   % \begin{align*}
%\xymatrix {
%\epsilon':& W^0\ar[r]^{\omega^0} & W^1\ar[r]^{\omega^1}& W^2\ar[r] &\cdots\ar[r] & W^d\ar[r]^{\omega^d} & W\ar[r]^-{\omega^{d+1}} &\Sigma^d W^0.
%}
%\end{align*}
%\end{enumerate}
%\end{theorem}
\begin{proof}[Proof of Theorem B]
We first prove that (a) implies (b). Suppose $\varphi:W^0\rightarrow X^0$ is a $\mathcal{W}$-cover. Extend the non-zero morphism $\gamma^{d+1}$ to a $(d+2)$-angle:
\begin{align*}
\xymatrix {
U^0\ar[r]^{\gamma^0} & U^1\ar[r]^{\gamma^1}& U^2\ar[r] &\cdots\ar[r] & U^d\ar[r]^{\gamma^d} & W \ar[r]^{\gamma^{d+1}} &\Sigma^d U^0,
}
\end{align*}
where we can choose $U^1,\dots,\, U^d$ in $\mathcal{W}$. Note that $\gamma^d$ is not a split epimorphism by Lemma \ref{lemma_epimono}. Since $\xi^d$ is right almost split, there is a morphism $\psi^d:U^d\rightarrow X^d$ such that $\gamma^d=\xi^d\circ\psi^d$. Then there exist morphisms $\psi^0,\dots,\,\psi^{d-1}$ making the following diagram commutative:
\begin{align*}
    \xymatrix {
U^0\ar[r]^{\gamma^0} \ar@{-->}[d]^{\psi^0} & U^1\ar[r]^{\gamma^1} \ar@{-->}[d]^{\psi^1} & \cdots\ar[r] & U^{d-1}\ar[r]^{\gamma^{d-1}} \ar@{-->}[d]^{\psi^{d-1}} & U^d\ar[r]^{\gamma^d} \ar[d]^{\psi^d}  & W\ar[r]^{\gamma^{d+1}} \ar@{=}[d] & \Sigma^d U^0 \ar@{-->}[d]^{\Sigma^d(\psi^0)}\\
X^0\ar[r]_{\xi^0} & X^1\ar[r]_{\xi^1}& \cdots\ar[r] & X^{d-1}\ar[r]_{\xi^{d-1}} & X^d\ar[r]_{\xi^d} & W\ar[r]_{\xi^{d+1}} &\Sigma^d X^0.
}
\end{align*}
In particular, we have $\Sigma^{d}(\psi^0)\circ \gamma^{d+1}=\xi^{d+1}$. Since $\varphi: W^0\rightarrow X^0$ is a $\mathcal{W}$-cover, there is a morphism $\nu :U^0\rightarrow W^0$ such that $ \varphi\circ\nu=\psi^0$. Consider a $(d+2)$-angle extending $\Sigma^d(\nu)\circ \gamma^{d+1}$:
\begin{align*}
\xymatrix {
\epsilon':& W^0\ar[r]^{\omega^0} & W^1\ar[r]^{\omega^1}& W^2\ar[r] &\cdots\ar[r]^{\omega^{d-1}} & W^d\ar[r]^{\omega^d} & W\ar[rr]^{\Sigma^d(\nu)\gamma^{d+1}} &&\Sigma^d W^0,
}
\end{align*}
where, as $W,\,W^0\in\mathcal{W}$, we can choose $W^1,\dots,\, W^d$ in $\mathcal{W}$ and by Lemma \ref{lemma_uni_min}. When $d\geq 2$, we can also choose $\omega^1,\dots,\,\omega^{d-1}$ in $\rad_\mathcal{M}$ and so in $\rad_\mathcal{W}$. We will show that $\epsilon'$ is an Auslander-Reiten $(d+2)$-angle in $\mathcal{W}$.

By Lemma \ref{lemma_cover}, we have that $W^0$ is either zero or indecomposable. Since
\begin{align}\label{eqn_xi}
    0\neq \xi^{d+1}=\Sigma^d(\psi^0)\circ\gamma^{d+1}=\Sigma^d (\varphi\circ\nu)\circ \gamma^{d+1}=\Sigma^d (\varphi)\circ \Sigma^d(\nu)\circ \gamma^{d+1},
\end{align}
it follows that $\Sigma^d(\nu)\circ\gamma^{d+1}$ is non-zero. Then $\Sigma^d W^0$ is non-zero and so $W^0$ is non-zero, hence it is indecomposable, so $\End(W^0)$ is local.

Now, by Lemma \ref{lemma_AReq_W}, in order to prove that $\epsilon'$ is an Auslander-Reiten $(d+2)$-angle in $\mathcal{W}$, it is enough to prove that $\omega^d$ is right almost split in $\mathcal{W}$.

Extend $\varphi:W^0\rightarrow X^0$ to a $(d+2)$-angle:
\begin{align*}
    \xymatrix{
    W^0\ar[r]^{\varphi} & X^0\ar[r]^{\delta^0} & Y^1\ar[r] &\cdots\ar[r]^-{\delta^{d-1}} & Y^d\ar[r]^-{\delta^d} & \Sigma^d W^0.
    }
\end{align*}

Since $\Sigma^d(\nu)\circ \gamma^{d+1}$ is non-zero, Lemma \ref{lemma_epimono} implies that $\omega^d$ is not a split epimorphism.
By (\ref{eqn_xi}), we have that $\varphi\circ\nu\circ\Sigma^{-d}(\gamma^{d+1})=\Sigma^{-d}(\xi^{d+1})$ and so there are morphisms $\alpha^1,\dots,\,\alpha^d$ making the following diagram commutative:
\begin{align}\label{diagram_atob}
    \xymatrix {
\Sigma^{-d}W\ar[rrr]^{(-1)^d\Sigma^{-d}(\xi^{d+1})}\ar[d]^{(-1)^d\nu\circ\Sigma^{-d}(\gamma^{d+1})} &&& X^0\ar[r]^{\xi^0}\ar@{=}[d] & X^1\ar[r]\ar@{-->}[d]^{\alpha^1}& \cdots\ar[r]^{\xi^{d-1}} & X^d\ar[r]^{\xi^d}\ar@{-->}[d]^{\alpha^d} & W\ar[d]^{(-1)^d\Sigma^d(\nu)\circ\gamma^{d+1}} \\
W^0\ar[rrr]_{\varphi} &&& X^0\ar[r]_{\delta^0} & Y^1\ar[r] &\cdots\ar[r]_-{\delta^{d-1}} & Y^d\ar[r]_-{\delta^d} & \Sigma^d W^0.
}
\end{align}
%X^{d-1}\ar[r]^{\xi^{d-1}}\ar@{-->}[d]^{\alpha^{d-1}} &
%Y^{d-1}\ar[r]_{\omega^{d-1}} &
For any $W'$ in $\mathcal{W}$, consider the exact sequence
\begin{align*}
    \Hom (W', Y^{d-1})\xrightarrow{\widetilde{\delta^{d-1}}}\Hom(W',Y^d)\xrightarrow{\widetilde{\delta^d}}\Hom(W',\Sigma^d W^0)\xrightarrow{\widetilde{(-1)^d\Sigma^d(\varphi)}}\Hom(W',\Sigma^d X^0),
\end{align*}
where, for a morphism $\eta$ we use the notation $\widetilde{\eta}:=\Hom(W',\eta)$ for readability.
Note that $Y^1\rightarrow \dots \rightarrow Y^d$ is in $\mathcal{W}$-exact by \cite[lemma\ 2.1]{J}. Hence $\widetilde{\delta^{d-1}}$ is surjective, so $\widetilde{\delta^d}$ is the zero map and $\widetilde{(-1)^d\Sigma^d(\varphi)}$ is injective.

Let $\phi:W'\rightarrow W$ be a morphism in $\mathcal{W}$ which is not a split epimorphism. As $\xi^d$ is right almost split, there exists a morphism $\eta:W'\rightarrow X^d$ such that $\phi=\xi^d\circ\eta$. Consider $\delta^d\alpha^d\eta\in\Hom(W',\Sigma^d W^0)$ and note that
\begin{align*}
    \widetilde{(-1)^d\Sigma^d(\varphi)}(\delta^d\alpha^d\eta)=(-1)^d\Sigma^d(\varphi)\circ\delta^d \alpha^d\eta=0\circ\alpha^d\eta= 0,
\end{align*}
where $(-1)^d\Sigma^d(\varphi)\circ\delta^d=0$ by Lemma \ref{lemma_consecutive}. Then, by injectivity of $\widetilde{(-1)^d\Sigma^d(\varphi)}$, we conclude that $\delta^d \alpha^d\eta=0$.

By commutativity of (\ref{diagram_atob}), we have
\begin{align*}
    0= \delta^d \alpha^d\eta=(-1)^d\Sigma^d(\nu)\gamma^{d+1}\xi^d\eta=(-1)^d\Sigma^d(\nu)\gamma^{d+1}\phi.
\end{align*}
Then we obtain a commutative diagram:
\begin{align*}
\xymatrix {
&&&&&& W'\ar[d]^{(-1)^d\phi}\ar[rrd]^0 \ar@{-->}[lld]_{\exists \beta'} \\
W^0\ar[r]_{\omega^0} & W^1\ar[r]_{\omega^1}& W^2\ar[r] &\cdots\ar[r] & W^d\ar[rr]_{\omega^d} && W\ar[rr]_{\Sigma^d(\nu)\gamma^{d+1}} &&\Sigma^d W^0,
}
\end{align*}
where $\beta':W'\rightarrow W^d$ exists by Lemma \ref{lemma_zeroiffexists1}. Letting $\beta:=(-1)^d\beta'$, we have $\phi=\omega^d\circ\beta$. Hence $\omega^d$ is right almost split in $\mathcal{W}$ as we wished and (a) implies (b).

We now prove that (b) implies (a). Suppose that we have an Auslander-Reiten $(d+2)$-angle in $\mathcal{W}$ of the form
  \begin{align*}
\xymatrix {
\epsilon':& W^0\ar[r]^{\omega^0} & W^1\ar[r]^{\omega^1}& W^2\ar[r] &\cdots\ar[r] & W^d\ar[r]^{\omega^d} & W\ar[r]^-{\omega^{d+1}} &\Sigma^d W^0.
}
\end{align*}
Since $\omega^d$ is not a split epimorphism and $\xi^d$ is right almost split, there is a morphism $\varphi^d:W^d\rightarrow X^d$ such that $\xi^d\circ\varphi^d=\omega^d$. Then there are morphisms $\varphi^0,\dots,\,\varphi^{d-1}$ making the following diagram commutative:
\begin{align}\label{diagram_btoa}
    \xymatrix {
\Sigma^{-d}W\ar[rr]^-{(-1)^d\Sigma^{-d}\omega^{d+1}}\ar@{=}[d] && W^0\ar[r]^{\omega^0}\ar@{-->}[d]^{\varphi^0}& W^1\ar[r]\ar@{-->}[d]^{\varphi^1} &\cdots\ar[r] & W^{d-1}\ar[r]^{\omega^{d-1}}\ar@{-->}[d]^{\varphi^{d-1}} & W^d\ar[r]^{\omega^{d}}\ar[d]^{\varphi^{d}} &W\ar@{=}[d]\\
\Sigma^{-d}W\ar[rr]_{(-1)^d\Sigma^{-d}\xi^{d+1}} && X^0\ar[r]_{\xi^0}& X^1\ar[r] &\cdots\ar[r] & X^{d-1}\ar[r]_{\xi^{d-1}} & X^d\ar[r]_{\xi^d} &W.
}
\end{align}
We show that $\varphi^0:W^0\rightarrow X^0$ is a $\mathcal{W}$-cover. First note that commutativity of (\ref{diagram_btoa}) and the fact that $\xi^{d+1}$ is non-zero implies that $\varphi^0$ is non-zero. Moreover, by Lemma \ref{lemma_AReq_W}, we know that $\End(W^0)$ is local. Hence, by the dual of \cite[lemma\ 2.4]{KH}, it follows that $\varphi^0$ is right minimal. So it remains to show that $\varphi^0$ is a $\mathcal{W}$-precover.

Suppose that $U^0$ in $\mathcal{W}$ and a morphism $\gamma^0: U^0\rightarrow X^0$ are given. We want to prove that $\gamma^0$ factors through $\varphi^0$. The case $U^0=0$ is trivial, so suppose that $U^0$ is non-zero.

Take a linear map $\psi:\Hom(\Sigma^{-d}W,X^0)\rightarrow k$ with $\psi(\Sigma^{-d}(\xi^{d+1}))\neq 0$. Define a bilinear map
\begin{align*}
 q: \Hom(\Sigma^{-d}&W, U^0)\times \Hom(U^0,W^0)\rightarrow k,\\
    &q(\phi,\alpha)=\psi(\varphi^0\alpha\phi).
\end{align*}
We show that if $\phi\neq 0$, then there exists an $\alpha$ such that $q(\phi,\alpha)\neq 0$. Let $\phi\in\Hom(\Sigma^{-d}W,U^0)$ be non-zero and extend $\Sigma^d(\phi)$ to a $(d+2)$-angle of the form
\begin{align*}
\xymatrix {
U^0\ar[r]^{\nu^0} & U^1\ar[r]^{\nu^1}& U^2\ar[r] &\cdots\ar[r] & U^d\ar[r]^{\nu^d} & W\ar[r]^-{\Sigma^d\phi} &\Sigma^d U^0,
}
\end{align*}
where, since $W,\,U^0$ are in $\mathcal{W}$, we can choose $U^1,\dots,\,U^d$ in $\mathcal{W}$. Note that, as $\Sigma^d\phi$ is non-zero and by Lemma \ref{lemma_epimono}, then $\nu^d$ is not a split epimorphism. So there is $\eta^d:U^d\rightarrow W^d$ such that $\nu^d=\omega^d\circ\eta^d$. Hence $\omega^{d+1}\nu^d=\omega^{d+1}\omega^d\eta^d=0$ by Lemma \ref{lemma_consecutive}.
Then we have a commutative diagram:
\begin{align*}
\xymatrix {
U^0\ar[r]^{\nu^0} & U^1\ar[r]^{\nu^1}& U^2\ar[r] &\cdots\ar[r] & U^d\ar[rr]^{\nu^d}\ar[rrd]_0 && W\ar[rr]^{\Sigma^d(\phi)}\ar[d]^{\omega^{d+1}} &&\Sigma^d U^0\ar@{-->}[lld]^{\exists \Sigma^d(\eta^0)},\\
&&&&&& \Sigma^d W^0
}
\end{align*}
where $\Sigma^d(\eta^0)$ exists by Lemma \ref{lemma_zeroiffexists2}. Note that $\eta^0\circ \phi=\Sigma^{-d}(\omega^{d+1})$. Then the element $\eta^0$ in $\Hom(U^0,W^0)$ is such that 
\begin{align*}
    q(\phi, \eta^0)=\psi(\varphi^0\eta^0\phi)=\psi(\varphi^0\Sigma^{-d}(\omega^{d+1}))=\psi(\Sigma^{-d}\xi^{d+1})\neq 0,
\end{align*}
so we have established the desired property of $q$.

Consider the linear map
\begin{align*}
    \varphi: \Hom&(\Sigma^{-d}W,U^0)\rightarrow k, \\
    &\varphi(\phi)=\psi(\gamma^0\phi).
\end{align*}
By \cite[lemma\ 2.5]{JP}, there is an element $\alpha\in\Hom(U^0,W^0)$ such that $\varphi(-)=q(-,\alpha)$. Then, by the definitions of $\varphi$ and $q$, for any $\phi\in\Hom(\Sigma^{-1}W,U^0)$, we have
\begin{align}\label{eqn_btoa}
    \psi(\gamma^0\phi)=\psi(\varphi^0\alpha\phi).
\end{align}
Since $\epsilon$ is an Auslander-Reiten $(d+2)$-angle in $\mathcal{M}$, then so is $\overline{\epsilon}:=(-1)^d\Sigma^{-d}\epsilon$. By Lemma \ref{lemma_sublemma}, we conclude that the bilinear map
\begin{align*}
 p: \Hom(\Sigma^{-d}&W, U^0)\times \Hom(U^0,X^0)\rightarrow k,\\
    &p(\phi,\delta)=\psi(\delta\circ\phi)
\end{align*}
is non-degenerate. Hence (\ref{eqn_btoa}) implies that $\gamma^0=\varphi^0\circ \alpha$, that is $\gamma^0$ factors through $\varphi^0$ as we wished.
\end{proof}

\section{A class of examples}\label{section_example}
In this section, we present a class of examples using $\mathcal{F}$ as described in \cite[section\ 4]{V} and\cite[section\ 7]{HJV}. We first give a full description of the wide subcategories $\overline{\mathcal{W}}$ of the $(d+2)$-angulated category $\overline{\mathcal{F}}=\add \{ \Sigma^{id}\mathcal{F}\mid i\in \mathbb{Z} \}$, using Theorem A. We then apply Theorem B to find the Auslander-Reiten $(d+2)$-angles in these subcategories $\overline{\mathcal{W}}$.

Note that \ref{defn_exampleV}-\ref{rmk_d-seq} will summarise results due to \cite[section\ 4]{V} and \cite[section\ 7]{HJV}.

\begin{defn}\label{defn_exampleV}
Let $d\geq 2,\,l\geq 2,\, m\geq 3$ be integers such that $d$ is even and 
\begin{align*}
    \frac{m-1}{l}=\frac{d}{2}.
\end{align*}
Let $Q$ be the quiver
\begin{align*}
    m\rightarrow m-1\rightarrow\cdots\rightarrow 2\rightarrow 1,
\end{align*}
and set $\Phi:= k Q / (\rad_{k Q})^l$.

For vertex $i$, let $p_i$ denote the corresponding projective $\Phi$-module and $q_i$ denote the corresponding injective $\Phi$-module. Note that $p_i=q_{i-l+1}$ for $l\leq i\leq m$. Let
\begin{align*}
    f_i:=
    \begin{cases}
p_i &\text{ for } 1\leq i\leq m,\\
q_{i-l+1} &\text{ for } m+1\leq i\leq m+l-1, \\
0 &\text{ for } i\leq 0 \text{ and } i\geq m+l.
\end{cases}
\end{align*}
Then we have a $d$-cluster tilting module
\begin{align*}
    f=\bigoplus_{i=1}^{m+l-1} f_i,
\end{align*}
and a $d$-cluster tilting subcategory $\mathcal{F}=\add(f)\subseteq \mmod (\Phi)$.
Moreover, we have that
\begin{align*}
    \dim _k \mathcal{F}(f_i,f_j)=
    \begin{cases}
1 &\text{ if } 0\leq j-i\leq l-1,\\
0 &\text{ otherwise.}
\end{cases}
\end{align*}
\end{defn}
\begin{exmp}\label{exmp_vaso}
For $d=4$, $l=4$ and $m=9$, the Auslander-Reiten quiver of $\Phi$ is:
\begin{align*}
    \xymatrix  @!0{
    &&& f_4\ar[rd] && f_5\ar[rd] && f_6\ar[rd] && f_7\ar[rd] && f_8\ar[rd] && f_9\ar[rd] \\
    && f_3\ar[ru]\ar[rd] && \bullet\ar[ru]\ar[rd] &&  \bullet\ar[ru]\ar[rd] && \bullet\ar[ru]\ar[rd] && \bullet\ar[ru]\ar[rd] && \bullet\ar[ru]\ar[rd] && f_{10}\ar[rd]\\
    & f_2\ar[ru]\ar[rd] && \bullet\ar[ru]\ar[rd] && \bullet\ar[ru]\ar[rd] && \bullet\ar[ru]\ar[rd] && \bullet\ar[ru]\ar[rd] && \bullet\ar[ru]\ar[rd] && \bullet\ar[ru]\ar[rd]&& f_{11}\ar[rd]\\
    f_1\ar[ru] && \bullet\ar[ru] && \bullet\ar[ru] && \bullet\ar[ru] && \bullet\ar[ru] && \bullet\ar[ru] && \bullet\ar[ru] && \bullet\ar[ru] && f_{12}.
    }
\end{align*}
\end{exmp}
Consider now the $(d+2)$-angulated category $\overline{\mathcal{F}}=\add \{ \Sigma^{id}\mathcal{F}\mid i\in \mathbb{Z} \}\subseteq \mathcal{D}^b(\mmod \Phi)$. It has the quiver
\begin{align}\label{diagram_quiver}
    \cdots \Sigma^{-d} f_1\rightarrow\cdots\rightarrow \Sigma^{-d} f_{m+l-1}\rightarrow f_1\rightarrow\cdots\rightarrow f_{m+l-1}\rightarrow \Sigma^d f_1\rightarrow\cdots,
\end{align}
where the composition of $l$ consecutive maps is zero, see \cite[proposition\ A.11]{JL}.
\begin{remark}\label{rmk_d-seq}
For any non-zero morphism $\mu:f_i\rightarrow f_j$, where $i\neq j$, there is an exact sequence of the form:
\begin{align*}
    \cdots\rightarrow f_{j-2l}\rightarrow f_{i-l}\rightarrow f_{j-l}\rightarrow f_i\xrightarrow{\mu} f_j\rightarrow f_{i+l}\rightarrow f_{j+l}\rightarrow f_{i+2l}\rightarrow\cdots .
\end{align*}
Note that this sequence terminates both on the right and on the left giving a $d$-exact sequence containing $\mu$:
\begin{align*}
    0\rightarrow f_x\rightarrow \cdots\rightarrow f_{j-l} \rightarrow f_i\xrightarrow{\mu} f_j\rightarrow f_{i+l}\rightarrow \cdots \rightarrow f_y\rightarrow 0.
\end{align*}
Note that the cases when $f_{i+l}=0$ (or $f_{j-l}=0$) are allowed and correspond to $f_i$ being injective non-projective (or $f_j$ being projective non-injective, respectively). In these cases, $\mu$ is surjective (or injective, respectively).

By Remark \ref{rmk_angle_seq}, this gives a $(d+2)$-angle in $\overline{\mathcal{F}}$  of the form:
\begin{align*}
    f_x\rightarrow \cdots\rightarrow f_{j-l} \rightarrow f_i\xrightarrow{\mu} f_j\rightarrow f_{i+l}\rightarrow \cdots \rightarrow f_y\rightarrow \Sigma^d f_x.
\end{align*}
Note that we can rotate this $(d+2)$-angle to make $f_j$ the end-term on the right.
\end{remark}
\begin{lemma}\label{lemma_AR_vaso}
Let $1\leq j\leq m+l-1$. Then there is an Auslander-Reiten $(d+2)$-angle in $\overline{\mathcal{F}}$ ending at $f_j$ of the form:
\begin{align}
&\Sigma^{-d} f_l\rightarrow \Sigma^{-d} f_{l+1}\rightarrow \cdots\rightarrow \Sigma^{-d} f_m \rightarrow \Sigma^{-d} f_{m+l-1}\xrightarrow{\mu} f_1\rightarrow &&\text{ if } j=1;\tag{a}\\
    &\Sigma^{-d} f_{j+l-1}\rightarrow \Sigma^{-d} f_{j+l}\rightarrow\cdots\rightarrow f_{j-1}\xrightarrow{\mu}  f_j\rightarrow  &&\text{ if } 2\leq j\leq m; \tag{b}\\
    &f_{j-m}\rightarrow f_{j+1-m}\rightarrow \cdots\rightarrow f_{j-l}\rightarrow f_{j-1}\xrightarrow{\mu} f_j\rightarrow  &&\text{ if } j\geq m+1. \tag{c}
\end{align}
\end{lemma}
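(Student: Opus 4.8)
The strategy is to produce, for each value of $j$, an explicit $(d+2)$-angle of the prescribed shape and then verify it is Auslander--Reiten using the characterisations established earlier, in particular Lemma~\ref{lemma_AReq} (the $\mathcal{W}=\mathcal{M}$ case of Lemma~\ref{lemma_AReq_W}) and Lemma~\ref{lemma_lras}. Concretely, the plan is: (i) choose a non-zero morphism $\mu\colon f_{i}\to f_j$ with $i$ as large as possible subject to $\dim_k\mathcal{F}(f_i,f_j)=1$, i.e.\ $i=j-1$ when $2\le j$ and the ``wrapped-around'' choice $\Sigma^{-d}f_{m+l-1}\to f_1$ when $j=1$; (ii) invoke Remark~\ref{rmk_d-seq} to extend $\mu$ to a $(d+2)$-angle in $\overline{\mathcal F}$ with all terms in $\overline{\mathcal F}$, then rotate so that $f_j$ is the right-hand end term; (iii) identify the resulting terms explicitly using the quiver~(\ref{diagram_quiver}) and the rule that the composite of $l$ consecutive arrows vanishes, obtaining exactly the lists in (a), (b), (c); (iv) check the Auslander--Reiten conditions.

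For step (iv), the key point is that $\mu\colon f_{j-1}\to f_j$ (resp.\ $\Sigma^{-d}f_{m+l-1}\to f_1$) is \emph{right almost split} in $\overline{\mathcal F}$. This follows from the very explicit structure of $\overline{\mathcal F}$: its Auslander--Reiten quiver is the linear quiver~(\ref{diagram_quiver}), $\dim_k$ of each Hom space is $0$ or $1$, and every indecomposable is one of the $\Sigma^{id}f_t$. A morphism $\delta\colon f_t\to f_j$ (for indecomposable source) which is not a split epimorphism is either zero or, when non-zero, factors through $f_{j-1}\xrightarrow{\mu} f_j$ because any non-zero map into $f_j$ from an earlier vertex of the quiver factors through the last arrow $f_{j-1}\to f_j$ — this is immediate from one-dimensionality of the Hom spaces and the fact that composition along the quiver is either an isomorphism on the relevant $1$-dimensional spaces or zero. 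An identical argument handles sources $\Sigma^{id}f_t$ for $i\neq 0$ once one notes the only indecomposables admitting a non-zero map to $f_j$ are $f_t$ with $j-l+1\le t\le j$ and $\Sigma^{-d}f_t$ with the analogous wrapped range. Then, since the first $d-1$ arrows of the constructed angle are non-isomorphisms (they are arrows of the quiver~(\ref{diagram_quiver}) between distinct indecomposables, hence in $\rad_{\overline{\mathcal F}}$) and $\mathrm{End}(\text{the left end term})$ is local (each $f_t$ is indecomposable, being an indecomposable module, and $\Sigma^{-d}f_t$ likewise), Lemma~\ref{lemma_AReq} gives that the angle is Auslander--Reiten.

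The only genuinely case-dependent bookkeeping is matching the left-hand end term $f_x$ (or $\Sigma^{-d}f_x$) and the intermediate terms to the formulas in (a)--(c). Here one uses the termination rule from Remark~\ref{rmk_d-seq}: starting from $\mu\colon f_{j-1}\to f_j$, the $d$-exact sequence continues to the right as $f_j\to f_{j-1+l}\to f_{j+l}\to\cdots$ and to the left as $\cdots\to f_{j-1-l}\to f_{j-1}$, terminating where an $f_\bullet$ first becomes $0$ (index $\le 0$ or $\ge m+l$). Counting the $d/2$ steps on each side — and remembering $(m-1)/l=d/2$, so $d/2$ steps of size $l$ starting from near $1$ reach near $m$ — produces exactly the index ranges displayed; the three cases correspond to whether the left terminus falls in the projective range, whether we must wrap through $\Sigma^{-d}$, and whether the right terminus lands in the injective range. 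After rotating to put $f_j$ last, the terms $\Sigma^{-d}f_{j+l-1}\to\cdots\to f_{j-1}\to f_j$ of case (b), and the analogous lists in (a) and (c), appear directly.

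\textbf{Main obstacle.} The conceptual content is light; the real work is the combinatorial verification that, after invoking Remark~\ref{rmk_d-seq} and rotating, the term list is precisely the one claimed in each of the three regimes, and that no off-by-one error occurs at the boundaries $j=1$, $j=m$, $j=m+1$, $j=m+l-1$ where terms vanish or wrap around under $\Sigma^{\pm d}$. Establishing that $\mu$ is right almost split — which in a more general category would be the heart of the matter — is here essentially forced by the one-dimensionality of the Hom spaces and the linear shape of the quiver~(\ref{diagram_quiver}), so I expect that part to be short.
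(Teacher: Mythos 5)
Your proposal is correct and follows essentially the same route as the paper: construct the $(d+2)$-angle by extending and rotating a non-zero irreducible map via Remark \ref{rmk_d-seq}, show $\mu$ is right almost split using the linear shape of the quiver (\ref{diagram_quiver}) and the one-dimensionality of the Hom spaces, note the remaining maps lie in $\rad_{\overline{\mathcal{F}}}$ since they connect non-isomorphic indecomposables, and conclude by one of the equivalent characterisations of Auslander--Reiten $(d+2)$-angles (the paper invokes Lemma \ref{lemma_lras_gen}(b) rather than Lemma \ref{lemma_AReq}, a purely cosmetic difference). The paper is equally brief about the index bookkeeping you flag as the main obstacle, handling case (a) by extending $f_1\rightarrow f_l$ and rotating, exactly as your ``wrapped-around'' choice amounts to.
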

\begin{proof}
First note that in any case, the complex is a $(d+2)$-angle in $\overline{\mathcal{F}}$ by Remark \ref{rmk_d-seq}. In fact, we can extend and rotate $\mu$ in cases (b) and (c) and $f_1\rightarrow f_l$ in case (a).

Moreover, since any morphism between two non-isomorphic indecomposable objects is in $\rad_{\overline{\mathcal{F}}}$, all the morphism of the $(d+2)$-angle are in $\rad_{\overline{\mathcal{F}}}$.

In cases (b) and (c), we have $\mu: f_{j-1}\rightarrow f_j$. This is not a split epimorphism as the only morphism of the form $f_j\rightarrow f_{j-1}$ is the zero morphism. 
Let $\overline{f}\in\overline{\mathcal{F}}$ and $\alpha:\overline{f}\rightarrow f_j$ be a non-zero morphism which is not a split epimorphism. Without loss of generality, assume that $\overline{f}$ is indecomposable. Note that, since $\alpha$ is not an isomorphism, then $\overline{f}\neq f_j$. Hence $\overline{f}$ is an object to the left of $f_j$ in the quiver (\ref{diagram_quiver}) and $\alpha$ factors through $\mu$.
Similarly, in case (a), $\mu:\Sigma^{-d} f_{m+l-1}\rightarrow f_1$ is not a split epimorphism and any morphism in $\overline{\mathcal{F}}$ ending at $f_1$ that is not a split epimorphism factors through $\mu$.

Hence in any case, $\mu$ is right almost split and the $(d+2)$-angle is an Auslander-Reiten $(d+2)$-angle by Lemma \ref{lemma_lras_gen}.
\end{proof}
\begin{exmp}[Continuing Example \ref{exmp_vaso}]\label{exmp_vaso2}
Using Lemma \ref{lemma_AR_vaso}, the following are Auslander-Reiten $6$-angles in $\overline{\mathcal{F}}$:
\begin{align*}
    &\Sigma^{-4} f_{4}\rightarrow \Sigma^{-4} f_{5}\rightarrow \Sigma^{-4} f_8\rightarrow \Sigma^{-4} f_{9}\rightarrow \Sigma^{-4} f_{12}\xrightarrow{\mu} f_1\rightarrow f_4, &&\text{ where } j=1;\tag{a}\\
    &\Sigma^{-4} f_{8}\rightarrow \Sigma^{-4} f_{9}\rightarrow \Sigma^{-4} f_{12}\rightarrow  f_{1}\rightarrow f_4\xrightarrow{\mu} f_5\rightarrow f_8,  &&\text{ where } j=5; \tag{b}\\
    &f_{1}\rightarrow f_{2}\rightarrow f_5\rightarrow f_{6}\rightarrow f_9\xrightarrow{\mu} f_{10}\rightarrow \Sigma^{4} f_1,  &&\text{ where } j=10. \tag{c}\\
\end{align*}
\end{exmp}

\begin{lemma}\label{rmk_wide_sn}
Let $\mathcal{V}\subseteq \overline{\mathcal{F}}$ be a wide subcategory.
Then, $\mathcal{V}=\overline{\mathcal{W}}=\add \{ \Sigma^{id}\mathcal{W}\mid i\in \mathbb{Z} \}$ for some wide subcategory $\mathcal{W}$ of $\mathcal{F}$. Moreover,
\begin{enumerate}[label=(\alph*)]
    \item $\mathcal{W}$ is semisimple if and only if for all distinct $f_i,\,f_j$ in $\mathcal{W}$, we have $l\leq \mid i-j\mid\leq m-1$;
    \item $\mathcal{W}$ is non-semisimple if and only if it is \textit{$l$-periodic}, \textbf{i.e.} $0\neq f_q\in\mathcal{W}$ implies $f_{q+rl}\in\mathcal{W}$ for all $r\in\mathbb{Z}$.
\end{enumerate}
%By Theorem A, all wide subcategories of $\overline{\mathcal{F}}$ have the form $\overline{\mathcal{W}}=\add \{ \Sigma^{id}\mathcal{W}\mid i\in \mathbb{Z} \}$ for $\mathcal{W}$ as in case (1) or (2) above.
\end{lemma}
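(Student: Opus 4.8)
The plan is to exploit the combinatorial description of $\mathcal{F}$ and its wide subcategories provided by \cite{HJV} together with the quiver description \eqref{diagram_quiver} of $\overline{\mathcal{F}}$. First, by Theorem A every wide subcategory $\mathcal{V}\subseteq\overline{\mathcal{F}}$ is of the form $\overline{\mathcal{W}}$ for a unique wide subcategory $\mathcal{W}\subseteq\mathcal{F}$; since $\mathcal{F}$ has finitely many indecomposables and hence every additive subcategory is functorially finite, the functorial-finiteness hypotheses in Theorem A are automatic here. So the first sentence of the statement is immediate and the real content is parts (a) and (b), which are statements purely about the $d$-abelian category $\mathcal{F}$ and can in fact be quoted or re-derived from \cite[section\ 7]{HJV}; the point of including them here is to phrase the classification in a form directly usable for the subsequent application of Theorem B.

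For part (a), I would argue as follows. A wide subcategory $\mathcal{W}=\add\{f_{i}\mid i\in S\}$ for some index set $S\subseteq\{1,\dots,m+l-1\}$. By the dimension formula in Definition \ref{defn_exampleV}, $\mathcal{F}(f_i,f_j)\neq 0$ precisely when $0\le j-i\le l-1$. If $\mathcal{W}$ is semisimple then there are no nonzero non-isomorphisms between its indecomposables, which forces $|i-j|\ge l$ for distinct $f_i,f_j\in\mathcal{W}$; the upper bound $|i-j|\le m-1$ comes from the fact that a nonzero morphism $f_i\to f_j$ (existing whenever the separation is at most $l-1$, i.e. not relevant directly) is not the obstruction — rather, the relevant point is that a $d$-exact sequence built from $\mu:f_{i-l}\to f_i$ as in Remark \ref{rmk_d-seq} has all its intermediate terms $f_{i-l+1},\dots$ still inside $\mathcal{W}$ only when the $l$-jumps stay within range, which combined with closure under $d$-kernels/$d$-cokernels of $\mathcal{W}$ forces the index gaps to be bounded by $m-1$. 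Conversely, if all gaps lie in $[l,m-1]$, one checks that every morphism in $\mathcal{W}$ between distinct indecomposables is zero (since a nonzero map requires gap $\le l-1$), so $\mathcal{W}$ is semisimple and trivially closed under $d$-kernels and $d$-cokernels (these are just the zero complexes), hence wide.

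For part (b), observe first that $\mathcal{W}$ non-semisimple is simply the negation of (a), so the equivalence to be proved is: $\mathcal{W}$ fails the gap condition in (a) if and only if $\mathcal{W}$ is $l$-periodic. The forward direction is the crux: if $f_i,f_j\in\mathcal{W}$ are distinct with $|i-j|\le l-1$, then there is a nonzero morphism between them, say $\mu\colon f_i\to f_j$, and Remark \ref{rmk_d-seq} produces a $d$-exact sequence $0\to f_x\to\cdots\to f_{j-l}\to f_i\xrightarrow{\mu}f_j\to f_{i+l}\to\cdots\to f_y\to 0$ whose nonzero terms are exactly $\{f_{i+rl}\}\cup\{f_{j+rl}\}$ for the relevant ranges of $r$. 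Wideness of $\mathcal{W}$ (closure under $d$-kernels, $d$-cokernels, and the Yoneda-replacement axiom for $d$-exact sequences with end terms in $\mathcal{W}$) forces all these terms into $\mathcal{W}$; since the $l$-shifts of $f_i$ and $f_j$ "interleave" and together with $f_i,f_j$ themselves generate the full $l$-periodic set through $q=i$, one gets $f_{q+rl}\in\mathcal{W}$ for all $r$. Then a short induction propagates $l$-periodicity from any one nonzero object to all of $\mathcal{W}$: given $0\neq f_q\in\mathcal{W}$, either $f_q$ is the only indecomposable (impossible, as then $\mathcal{W}$ would be semisimple, contradicting our hypothesis) or it has a nonzero non-iso partner and the above applies. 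The reverse direction is easy: an $l$-periodic $\mathcal{W}$ containing some $f_q\neq 0$ contains $f_{q+l}$, and the gap is exactly $l\notin[l,m-1]$... wait, $l$ \emph{is} in $[l,m-1]$ when $l\le m-1$, so one must instead note that $l$-periodicity with a nonzero object forces $\mathcal{W}$ to contain a pair with gap $< l$: indeed if $f_q\in\mathcal{W}$ is chosen with $q$ minimal nonzero index, then $f_q$ must have arisen as a term of a $d$-exact sequence forcing a neighbour with small gap into $\mathcal{W}$, \emph{or} more directly, an $l$-periodic set that is a valid wide subcategory and contains $\geq 2$ elements of the "fundamental domain" $\{1,\dots,l\}$-worth of indices must contain consecutive-ish ones; I would make this precise by appealing directly to the classification of wide subcategories in \cite[section\ 7]{HJV} rather than re-deriving it.

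The main obstacle I anticipate is the careful bookkeeping of which $f_x,\dots,f_y$ actually appear (non-vanish) in the $d$-exact sequence of Remark \ref{rmk_d-seq} for a given $\mu\colon f_i\to f_j$, including the degenerate cases where the sequence is shorter because one endpoint is projective-non-injective or injective-non-projective; getting the periodicity propagation exactly right, and matching it cleanly to the statement of wideness for a $d$-abelian category (particularly the Yoneda-equivalence clause), is where the real work lies. For this reason I would lean heavily on quoting \cite[section\ 7]{HJV}, where the wide subcategories of this particular $\mathcal{F}$ are classified, and reduce the proof here to: (i) invoke Theorem A for the first sentence, and (ii) translate the \cite{HJV} classification into the "gap condition / $l$-periodic" dichotomy, verifying the two descriptions coincide.
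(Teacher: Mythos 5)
Your final strategy coincides with the paper's proof, which is exactly two sentences long: the first claim follows from Theorem A (with functorial finiteness automatic, as you note), and parts (a) and (b) are simply quoted from \cite[section\ 7]{HJV}. Your exploratory re-derivation of (a) and (b) wobbles — as you yourself noticed in the reverse direction of (b) — but since you ultimately defer to the \cite{HJV} classification, the proposal matches the paper's approach.
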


\begin{proof}
The fact that $\mathcal{V}=\overline{\mathcal{W}}$ follows from Theorem A. The rest of the lemma follows from \cite[section\ 7]{HJV}.
\end{proof}

\begin{lemma}
Let $\overline{\mathcal{W}}\subseteq\overline{\mathcal{F}}$ be a wide subcategory, where $\mathcal{W}\subseteq \mathcal{F}$ is semisimple. Let $f_j\in\mathcal{W}$ and suppose $\overline{f}$ is the initial object of the Auslander-Reiten $(d+2)$-angle in $\overline{\mathcal{F}}$ ending at $f_j$. Then
    \begin{align*}
        \Sigma^{-d} f_j\rightarrow 0\rightarrow \cdots\rightarrow 0\rightarrow f_j\xrightarrow{1_{f_{j}}} f_j
    \end{align*}
    is an Auslander-Reiten $(d+2)$-angle in $\overline{\mathcal{W}}$ and $\Sigma^{-d} f_j\rightarrow \overline{f}$ is a $\overline{\mathcal{W}}$-cover.
\end{lemma}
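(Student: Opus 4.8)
The plan is to first use the hypotheses to show that $\overline{\mathcal{W}}$ is a semisimple $k$-category, and then to read off the first assertion from Definition \ref{defn_AR_subc} and the second from Theorem B. For semisimplicity of $\overline{\mathcal{W}}$: since $\mathcal{W}$ is semisimple, Lemma \ref{rmk_wide_sn}(a) gives $l\leq|a-b|\leq m-1$ for all distinct indecomposables $f_a,f_b\in\mathcal{W}$. Recalling that $\overline{\mathcal{F}}$ has Auslander--Reiten quiver (\ref{diagram_quiver}), in which the composition of any $l$ consecutive maps vanishes, the space $\Hom_{\overline{\mathcal{F}}}(\Sigma^{id}f_a,\Sigma^{i'd}f_b)$ is one-dimensional exactly when $\Sigma^{i'd}f_b$ lies at most $l-1$ arrows to the right of $\Sigma^{id}f_a$ along (\ref{diagram_quiver}), that is, when $0\leq(b-a)+(i'-i)(m+l-1)\leq l-1$, and is zero otherwise. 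As $m+l-1>l-1$, this cannot hold when $a\neq b$ with $|a-b|\leq m-1$, nor when $a=b$ with $i\neq i'$. Hence $\overline{\mathcal{W}}$ has no nonzero morphisms between non-isomorphic indecomposables and each indecomposable has endomorphism algebra $k$; equivalently, every nonzero morphism out of an indecomposable of $\overline{\mathcal{W}}$ is a split monomorphism, and dually. (Alternatively this follows from Theorem A: writing $\overline{\mathcal{W}}=\phi_*(\overline{\mathcal{G}})$ with $\mathcal{G}\simeq\mathcal{W}$ semisimple forces $\Gamma$ to be semisimple, so $\mathcal{G}=\mmod\Gamma$ and $\overline{\mathcal{G}}$, hence $\overline{\mathcal{W}}$, is semisimple.)

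Granting this, the $\Sigma^d$-sequence $\Sigma^{-d}f_j\to0\to\cdots\to0\to f_j\xrightarrow{1_{f_j}}f_j$ is, since $d$ is even, precisely the left rotation via (N2) of the trivial $\Sigma^d$-sequence on $\Sigma^{-d}f_j$ from (N1); so it is a $(d+2)$-angle in $\overline{\mathcal{F}}$, with all terms in $\overline{\mathcal{W}}$ because the latter is closed under $\Sigma^{\pm d}$. Its middle morphisms are zero, hence in $\rad_{\overline{\mathcal{W}}}$; the morphism $\Sigma^{-d}f_j\to0$ is not a split monomorphism (as $\Sigma^{-d}f_j\neq0$), and by the previous paragraph every non-split monomorphism out of the indecomposable $\Sigma^{-d}f_j$ is zero and so factors through $\Sigma^{-d}f_j\to0$; thus it is left almost split in $\overline{\mathcal{W}}$, and dually $0\to f_j$ is right almost split in $\overline{\mathcal{W}}$. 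By Definition \ref{defn_AR_subc} this is an Auslander--Reiten $(d+2)$-angle in $\overline{\mathcal{W}}$, which is the first assertion.

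For the second assertion I would apply Theorem B with $\mathcal{M}=\overline{\mathcal{F}}$, $\mathcal{W}=\overline{\mathcal{W}}$ (wide, hence closed under $d$-extensions), $W=f_j$, $U^0=\Sigma^{-d}f_j$, $\gamma^{d+1}=1_{f_j}\colon f_j\to\Sigma^dU^0=f_j$ (nonzero), and with $\epsilon$ the Auslander--Reiten $(d+2)$-angle in $\overline{\mathcal{F}}$ ending at $f_j$ provided by Lemma \ref{lemma_AR_vaso}, whose initial term is $\overline{f}$. By the previous paragraph, condition (b) of Theorem B holds, with $W^0=\Sigma^{-d}f_j$; hence condition (a) holds, so $\overline{f}$ has a $\overline{\mathcal{W}}$-cover $\varphi\colon\Sigma^{-d}f_j\to\overline{f}$. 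Inspecting the three cases of Lemma \ref{lemma_AR_vaso}, $\overline{f}$ always sits exactly $l-1$ arrows to the right of $\Sigma^{-d}f_j$ in (\ref{diagram_quiver}), so $\Hom_{\overline{\mathcal{F}}}(\Sigma^{-d}f_j,\overline{f})$ is one-dimensional; since $\varphi\neq0$ it spans this space, whence the essentially unique nonzero morphism $\Sigma^{-d}f_j\to\overline{f}$ is a $\overline{\mathcal{W}}$-cover. The only genuinely non-formal point is the semisimplicity of $\overline{\mathcal{W}}$ in the first paragraph: one must know that the period $m+l-1$ of the quiver (\ref{diagram_quiver}) is large enough (here $>l-1$) that the semisimplicity bound $|a-b|\leq m-1$ kills every morphism between distinct indecomposables of $\overline{\mathcal{W}}$, including the wrap-around ones; everything after that is a direct application of (N1)--(N2), Definition \ref{defn_AR_subc}, Lemma \ref{lemma_AR_vaso} and Theorem B.
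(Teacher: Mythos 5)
Your proposal is correct and follows essentially the same route as the paper: both arguments first show that the semisimplicity bound $l\leq|i-k|\leq m-1$ together with the structure of the quiver (\ref{diagram_quiver}) forces every morphism in $\overline{\mathcal{W}}$ between indecomposables to be either zero or an isomorphism, deduce that $0\rightarrow f_j$ is right almost split so the displayed sequence is an Auslander-Reiten $(d+2)$-angle in $\overline{\mathcal{W}}$, and then invoke Theorem B (direction (b)$\Rightarrow$(a)) to obtain the $\overline{\mathcal{W}}$-cover $\Sigma^{-d}f_j\rightarrow\overline{f}$. Your additional checks (the rotation of the trivial angle via (N1)--(N2) using that $d$ is even, the verification for all shifts $\Sigma^{id}$, and the observation that $\overline{f}$ sits $l-1$ arrows to the right of $\Sigma^{-d}f_j$) are slightly more complete than the paper's but do not change the argument.
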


\begin{proof}
We claim that the only non-zero morphisms in $\overline{\mathcal{W}}$ are scalar multiples of the identity morphisms. If $f_i,\,f_k$ are two distinct objects in $\mathcal{W}$, then
\begin{align*}
    \overline{\mathcal{W}}(f_i,f_k)=0 \text{ since } l\leq \mid i-k\mid.
\end{align*}
Suppose for a contradiction that $\overline{\mathcal{W}}(f_i,\Sigma^d f_k)$ is non-zero. Then, since the composition of $l$ consecutive arrows in diagram (\ref{diagram_quiver}) is zero, we must have $k<i$ and there is a sequence of at most $l$ objects of the form:
\begin{align*}
    f_i\rightarrow \cdots \rightarrow f_{m+l-1}\rightarrow \Sigma^d f_1 \rightarrow \cdots \rightarrow \Sigma^d f_k.
\end{align*}
But then $m+l-i+k\leq l$, contradicting the fact that $ i-k \leq m-1$. Hence we proved our claim and so $0\rightarrow f_j$ is right almost split in $\overline{\mathcal{W}}$. Then
 \begin{align*}
        \Sigma^{-d} f_j\rightarrow 0\rightarrow \cdots\rightarrow 0\rightarrow f_j\xrightarrow{1_{f_{j}}} f_j
    \end{align*}
    is an Auslander-Reiten $(d+2)$-angle in $\overline{\mathcal{W}}$ and $\Sigma^{-d} f_j\rightarrow \overline{f}$ is a $\overline{\mathcal{W}}$-cover, by Theorem B.
\end{proof}

\begin{lemma}\label{lemma_cover_vaso}
Let $\overline{\mathcal{W}}\subseteq\overline{\mathcal{F}}$ be a wide subcategory, where $\mathcal{W}\subseteq \mathcal{F}$ is non-semisimple. Let $f_j\in\mathcal{W}$ and suppose $\overline{f}$ is the initial object of the Auslander-Reiten $(d+2)$-angle in $\overline{\mathcal{F}}$ ending at $f_j$.

Starting from $\overline{f}$ and moving left in the quiver (\ref{diagram_quiver}), let $w$ be the first object found which is in $\overline{\mathcal{W}}$. Then there is a $\overline{\mathcal{W}}$-cover $w\rightarrow \overline{f}$, and we have an Auslander-Reiten $(d+2)$-angle in $\overline{\mathcal{W}}$ of the form:
 \begin{align*}
            w \rightarrow \cdots \rightarrow f_j\rightarrow \Sigma^d w.
        \end{align*}
   
\end{lemma}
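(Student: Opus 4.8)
The plan is to deduce the lemma from Theorem B, applied with $\mathcal{M}=\overline{\mathcal{F}}$, with $\overline{\mathcal{W}}$ as the subcategory (which is closed under $d$-extensions, being wide), with $W=f_j$, and with $\epsilon$ taken to be the Auslander--Reiten $(d+2)$-angle in $\overline{\mathcal{F}}$ ending at $f_j$ produced by Lemma \ref{lemma_AR_vaso}, so that $X^0=\overline{f}$. Once the hypotheses of Theorem B are in place, the assertion of the present lemma is exactly the implication $(a)\Rightarrow(b)$: a $\overline{\mathcal{W}}$-cover of $\overline f$ yields an Auslander--Reiten $(d+2)$-angle in $\overline{\mathcal{W}}$ ending at $f_j$, and the form $w\to W^1\to\cdots\to W^d\to f_j\to\Sigma^d w$ is precisely what $(b)$ delivers. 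The running hypothesis of Theorem B is cheap here: $U^0:=\Sigma^{-d}f_j$ lies in $\overline{\mathcal{W}}$ since $\overline{\mathcal{W}}$ is closed under $\Sigma^{-d}$, and $1_{f_j}\colon f_j\to\Sigma^d U^0=f_j$ is a nonzero morphism. So the real content is to produce a $\overline{\mathcal{W}}$-cover of $\overline f$ whose source is the object $w$ of the statement.

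First I would pin down the combinatorics of $\overline{\mathcal{W}}$ inside the quiver (\ref{diagram_quiver}). Since $\mathcal{W}$ is non-semisimple, Lemma \ref{rmk_wide_sn}(b) says it is $l$-periodic. Because $d$ is even and $(m-1)/l=d/2$, we get $m+l-1=l(d+2)/2$, a multiple of $l$; hence the automorphism $\Sigma^{\pm d}$ shifts (\ref{diagram_quiver}) by a multiple of $l$ vertices, and combined with $l$-periodicity inside $\{f_1,\dots,f_{m+l-1}\}$ this shows that the indecomposable objects of $\overline{\mathcal{W}}$ form an $l$-periodic, and (as $f_j\neq0$) nonempty, subset of the vertices of (\ref{diagram_quiver}). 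Consequently every window of $l$ consecutive vertices of (\ref{diagram_quiver}) meets $\overline{\mathcal{W}}$, so the object $w$ — the first vertex in $\overline{\mathcal{W}}$ encountered on moving left from $\overline f$, inclusive — exists and lies at most $l-1$ steps to the left of $\overline f$. Using the dimension formula of Definition \ref{defn_exampleV}, together with the fact (recalled right after (\ref{diagram_quiver})) that any composite of $l$ consecutive arrows of (\ref{diagram_quiver}) vanishes, a nonzero morphism between indecomposables of (\ref{diagram_quiver}) exists exactly when the target lies $0,1,\dots$ or $l-1$ steps to the right of the source; in particular there is a morphism $\rho\colon w\to\overline f$, nonzero and unique up to a nonzero scalar.

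Next I would check that $\rho$ is a $\overline{\mathcal{W}}$-cover of $\overline f$. For the precover property, take $v\in\overline{\mathcal{W}}$, which we may assume indecomposable, and $\beta\colon v\to\overline f$; the zero morphism factors trivially, and if $\beta\neq0$ then $v$ sits at most $l-1$ steps to the left of $\overline f$, hence — by the defining property of $w$ — not strictly to the left of $w$, so that $w$ lies on the path from $v$ to $\overline f$ and the two sub-distances sum to at most $l-1$. The corresponding path morphism $v\to w$ therefore lies in $\overline{\mathcal{W}}$ (a full subcategory) and composes with $\rho$ to a nonzero path morphism $v\to\overline f$, hence to a nonzero scalar multiple of $\beta$, so after rescaling $\beta$ factors through $\rho$. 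For right minimality, note $w\cong\Sigma^{id}f_k$ is indecomposable with $\End(w)\cong k$ (again by Definition \ref{defn_exampleV}), so since $\rho\neq0$ any $\theta\in\End(w)$ with $\rho\theta=\rho$ equals $1_w$. Thus $\rho\colon w\to\overline f$ is a $\overline{\mathcal{W}}$-cover, and Theorem B now yields an Auslander--Reiten $(d+2)$-angle in $\overline{\mathcal{W}}$ of the form $w\to\cdots\to f_j\to\Sigma^d w$, as claimed.

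The main obstacle is the bookkeeping of the middle two paragraphs: one must take care that distances are measured in the infinite quiver (\ref{diagram_quiver}), that $l$-periodicity genuinely survives passage through $\Sigma^{\pm d}$ (this is where the hypothesis that $d$ is even is used), and that the precover factorisation only ever needs vertices in the width-$l$ window to the left of $\overline f$. Once those points are secured, invoking Theorem B is immediate and simultaneously produces the cover $w\to\overline f$ and the desired Auslander--Reiten $(d+2)$-angle.
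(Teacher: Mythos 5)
Your proposal is correct and follows essentially the same route as the paper: locate $w$ using $l$-periodicity and the fact that only the width-$l$ window to the left of $\overline{f}$ admits nonzero morphisms into $\overline{f}$, check that $w\to\overline{f}$ is a $\overline{\mathcal{W}}$-cover, and invoke Theorem B; you simply supply more detail than the paper does (the running hypothesis on $U^0$, the persistence of $l$-periodicity across $\Sigma^{\pm d}$ via $m+l-1=l(d+2)/2$, and right minimality). One small slip: in the precover check, $v$ is not strictly to the \emph{right} of $w$ (you wrote ``left''), which is the inequality your subsequent factorisation of $\beta$ through $w$ actually uses.
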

\begin{remark}
Note that $\overline{f}$ is described in cases (a), (b), (c) of Lemma \ref{lemma_AR_vaso} for all possible values of $j$.
In cases (b) and (c), so $\overline{f}=\Sigma^{-d} f_{j+l-1}$, we have $w=\Sigma^{-d}f_p$, for
\begin{align*}
p:=\text{max} \{ n\in\mathbb{Z}^{>0}\mid n\leq j+l-1,\, f_p\in\mathcal{W} \}.
\end{align*}
In case (a), so $\overline{f}=f_{j-m}$, then $w$ can be either of the form $f_p$ or $\Sigma^{-d} f_q$.

Moreover, once $w$ is found, Remark \ref{rmk_d-seq}  can be used to find the Auslander-Reiten $(d+2)$-angle in $\overline{\mathcal{W}}$. Note that the latter has half of its objects equal to the ones in the  Auslander-Reiten $(d+2)$-angle in $\overline{\mathcal{F}}$ ending at $f_j$, \textbf{i.e.} $f_j$ and every second of the terms to its left. The remaining objects are obtained by replacing $\overline{f}$ with $w$ and, at every step, shifting by $l$ objects in diagram (\ref{diagram_quiver}).
\end{remark}    
    
\begin{proof}[Proof of Lemma \ref{lemma_cover_vaso}]
Given any object $g$ in (\ref{diagram_quiver}), the indecomposable objects in $\overline{\mathcal{F}}$ having non-zero morphism into $g$ are exactly $g$ and the $l-1$ objects to its left in the quiver.

Consider $g=\overline{f}$  and note that, since $\mathcal{W}\neq 0$ is $l$-periodic, then at least one of these $l$ objects is in $\overline{\mathcal{W}}$. Hence $w$ can be chosen as described in Lemma \ref{lemma_cover_vaso} with $w\rightarrow \overline{f}$ non-zero. Moreover, $\delta:w\rightarrow \overline{f}$ is a $\overline{\mathcal{W}}$-cover since all other morphisms from $\overline{\mathcal{W}}$ to $\overline{f}$ factor through $\delta$. The last part of the lemma follows from Theorem B.
\end{proof}

\begin{exmp}[Continuing Example \ref{exmp_vaso}]
Let $\mathcal{W}=\add \{ f_1,f_2,f_5,f_6,f_9,f_{10} \}$. Note this is $4$-periodic and hence $\overline{\mathcal{W}}\subseteq \overline{\mathcal{F}}$ is wide. 
Consider the $6$-angle (a) from Example \ref{exmp_vaso2}, where $f_1\in\overline{\mathcal{W}}$. Here, $\overline{f}=\Sigma^{-4} f_4$ has $\overline{\mathcal{W}}$-cover $w=\Sigma^{-4} f_2 \rightarrow\Sigma^{-4} f_4$. Then, we obtain the Auslander-Reiten $6$-angle in $\overline{\mathcal{W}}$:
\begin{align*}
    \Sigma^{-4} f_{2}\rightarrow \Sigma^{-4} f_{5}\rightarrow \Sigma^{-4} f_{6}\rightarrow \Sigma^{-4}  f_{9}\rightarrow \Sigma^{-4 }f_{10}\rightarrow f_1\rightarrow f_2.
\end{align*}

Similarly, starting from the $6$-angle (b) from Example \ref{exmp_vaso2}, we obtain the Auslander-Reiten $6$-angle in $\overline{\mathcal{W}}$:
\begin{align*}
\Sigma^{-4} f_{6}\rightarrow \Sigma^{-4} f_{9}\rightarrow \Sigma^{-4} f_{10}\rightarrow  f_{1}\rightarrow f_2\rightarrow f_5\rightarrow f_6.
\end{align*}
Finally, note that since all the objects in the  $6$-angle (c) in Example \ref{exmp_vaso2}  are in $\overline{\mathcal{W}}$, then 
\begin{align*}
    f_{1}\rightarrow f_{2}\rightarrow f_5\rightarrow f_{6}\rightarrow f_9\xrightarrow{\mu} f_{10}\rightarrow \Sigma^{4} f_1
\end{align*}
is also an Auslander-Reiten $6$-angle in $\overline{\mathcal{W}}$.
\end{exmp}


\begin{thebibliography}{99}

\bibitem{AF} F.\ W.\ Anderson and K.\ R.\ Fuller,  ``Rings and categories of modules'', Springer-Verlag, New York, 1992, second edition.

\bibitem{A} I.\ Assem, D.\ Simson and A. Skowro\'{n}ski, ``Elements of the representation theory of associative algebras'', London Mathematical Society Student Texts \textbf{65}, Cambridge University Press, Cambridge, 2010.

%%\bibitem{AR} M.\ Auslander and I.\ Reiten, {\it Representation theory of Artin algebras III}, Comm. Algebra \textbf{3} (1975), 239-294.

%%\bibitem{AS} M.\ Auslander and S.\ O.\ Smal\o, {\it Almost split sequences in subcategories}, J. Algebra \textbf{69} (1981), 426-454.

\bibitem{B} K. Br\"uning, {\it Thick subcategories of the derived category of a hereditary algebra}, Homology Homotopy Appl. \textbf{9} (2007), 165-176.

%\bibitem{CCS} P.\ Caldero, F.\ Chapoton and R.\ Schiffler, {\it Quivers with relations arising from clusters ($A_n$ case)}, Trans. Amer. Math. Soc. \textbf{358} (2006), 1347-1364.

\bibitem{GKO} C.\ Geiss, B.\ Keller and S.\ Oppermann, {\it $n$-angulated categories}, J. Reine Angew. Math. \textbf{675} (2013), 101-120.

\bibitem{DH} D.\ Happel, {\it On the derived category of a finite dimensional algebra}, Comment. Math. Helv. \textbf{62} (1987), 339-389.

%\bibitem{H} R.\ Hartshorne, {\it Residues and duality}, Springer-Verlag, Berlin, 1966.

\bibitem{HJV} M.\ Herschend, P.\ J\o rgensen and L.\ Vaso, Wide subcategories of $d$-cluster tilting subcategories, preprint (2017). \url{math.RT/1705.02246}

%\bibitem{HJR} T.\ Holm, P.\ J\o rgensen, and M.\ Rubey, {\it Torsion pairs in cluster tubes}, J. Algebraic Combin. \textbf{39} (2014), 587-605.

%\bibitem{IT} K.\ Igusa and G.\ Todorov, {\it Cluster categories coming from cyclic posets}, Communications in Algebra \textbf{43:10} (2015), 4367-4402.

\bibitem{I} O.\ Iyama, {\it Cluster-tilting for higher Auslander algebras}, Adv. Math. \textbf{226} (2011), 1-61.

\bibitem{IO} O.\ Iyama, {\it Higher dimensional Auslander-Reiten theory on maximal orthogonal
subcategories}, Adv. Math. \textbf{210} (2007), 22-50.

\bibitem{IY} O.\ Iyama and Y. Yoshino, {\it Mutation in triangulated categories and rigid Cohen-Macaulay modules},  Invent. Math. \textbf{172} (2008), 117-168.

\bibitem{JG} G.\ Jasso, {\it $n$-abelian and $n$-exact categories}, Math. Z. \textbf{283} (2016), 703-759.

\bibitem{JP}  P.\ J\o rgensen, {\it Auslander-Reiten triangles in subcategories}, J. K-theory \textbf{3} (2009), 583-601.

\bibitem{JL}  P.\ J\o rgensen, Higher dimensional homological algebra, lecture notes from LMS-CMI Research School, University of Leicester (19-23 June 2017).

\bibitem{J} P.\ J\o rgensen, {\it Torsion classes and $t$-structures in higher homological algebra}, Int. Math. Res. Not. IMRN \textbf{13} (2016), 3880-3905.

%\bibitem{K} B.\ Keller, {\it Cluster algebras, quiver representations and triangulated categories}, Triangulated Categories (2008), 76-160.

%\bibitem{MK} M.\ Kleiner, {\it Approximations and almost split sequences in homologically finite subcategories}, J. Algebra \textbf{198} (1997), 135-163.

\bibitem{KH} H.\ Krause, {\it Auslander-Reiten theory via Brown representability}, K-Theory \textbf{20} (2011), 331-344.

%\bibitem{N} A.\ Neeman, {\it Triangulated categories}, Princeton University Press (2001).

\bibitem{OT} S.\ Oppermann and H.\ Thomas, {\it Higher-dimensional cluster combinatorics and representation theory}, J. Eur. Math. Soc. \textbf{14} (2012), 1679-1737.

\bibitem{R} C.\ M.\ Ringel, ``Tame algebras and integral quadratic forms'', Lecture Notes in Math. \textbf{1099}, Springer, Berlin (1984).

%\bibitem{S} R.\ Schiffler, {\it Cluster algebras and cluster categories}, Lecture notes for the Latin American Colloquium, Sao Pedro, Brazil (2009).

\bibitem{V} L. Vaso, $n$-cluster tilting subcategories of representation-directed algebras, preprint (2017). \url{math.RT/1705.01031v1}

%\bibitem{W} C.\ A.\ Weibel, {\it An Introduction to homological algebra}, Cambridge University Press, Cambridge (1997).

\end{thebibliography}
\end{document}